\documentclass[12pt]{amsart}
\usepackage[colorlinks=true,pagebackref,hyperindex,citecolor=blue,linkcolor=red]{hyperref}
\usepackage{amsmath}
\usepackage{amsfonts}
\usepackage{setspace}
\usepackage{moreverb}
\usepackage[dvips]{graphicx}
\usepackage[latin1]{inputenc}
\usepackage[T1]{fontenc}
\usepackage{amssymb}
\usepackage{tikz}
\usepackage{color}
\usepackage[all]{xy}
\usepackage{xfrac}
\usepackage[top=1in, bottom=1in, left=1in, right=1in]{geometry}
\usepackage{mathrsfs}

%\usepackage{mathabx}

%\usepackage{hyperref}
% THEOREM Environments ---------------------------------------------------

\newtheorem{theoremx}{Theorem}
 % "letter-numbered" theorems for Introduction

\newtheorem{theorem}{Theorem}[section]

\newtheorem{corollary}[theorem]{Corollary}
\newtheorem{lemma}[theorem]{Lemma}
\newtheorem{proposition}[theorem]{Proposition}

\theoremstyle{definition}
\newtheorem{definition}[theorem]{Definition}

\newtheorem{notation}[theorem]{Notation}

\newtheorem{remark}[theorem]{Remark}
\numberwithin{equation}{subsection}

% MATH

%For sets

\newcommand{\fpt}{{\operatorname{fpt}}}
\newcommand{\ct}{{\operatorname{ct}}}

\newcommand{\NN}{\mathbb{N}}
\newcommand{\ZZ}{\mathbb{Z}}

\newcommand{\m}{\mathfrak{m}}

\newcommand{\fa}{\mathfrak{a}}

\newcommand{\cA}{\mathcal{A}}

\newcommand{\Spec}{\operatorname{Spec}}

\newcommand{\Hom}{\operatorname{Hom}}

\newcommand{\Supp}{\operatorname{Supp}}

\newcommand{\Ht}{\operatorname{ht}}

\newcommand{\reg}{\operatorname{reg}}

%%%%Commands added by Yanagawa

%--

%\newcommand{\gr}{\mbox{gr}\,}

%%%%----------

\newcommand{\fp}{\mathfrak{p}}

\newcommand{\fq}{\mathfrak{q}}

%Connecting map in LES

%For notes

 %For red comments on typos

%-------------------------------------------------------
%tilde
\newcount\ancho \newcount\anchom \newcount\anchoa
\newcount\anchob \newcount\altura
\newcommand{\lhat}[3][0]{\altura=0 \advance\altura by #1
\ancho=#2 \anchom=\ancho \divide\anchom by 2
\anchoa=\ancho \divide\anchoa by 4
\anchob=\anchom \advance\anchob by \anchoa
\kern-3pt \begin{array}[b]{c}
\begin{picture}(1,1)(\anchom,-\altura)
\qbezier(0,2)(\anchoa,4)(\anchom,6)
\qbezier(\anchom,6)(\anchob,4)(\ancho,2)
\qbezier(0,2)(\anchoa,3.8)(\anchom,5.6)
\qbezier(\anchom,5.6)(\anchob,3.8)(\ancho,2)
\end{picture} \\[-4pt] {#3}
\end{array} \kern-4pt }
%----------------------------------------------------------

\begin{document}
\newcommand{\tens}{\otimes}
\newcommand{\hhtest}[1]{\tau ( #1 )}
\renewcommand{\hom}[3]{\operatorname{Hom}_{#1} ( #2, #3 )}

\title{$F$-Invariants of Stanley-Reisner Rings}
\author[W\'agner Badilla-C\'espedes]{W\'agner Badilla-C\'espedes$^1$}
\address{Centro de Investigaci\'on en Matem\'aticas, A.C., Jalisco S/N, Col. Mineral de Valenciana, C.P. $36023$, Guanajuato, GTO, M\'exico}  \email{wagner.badilla@cimat.mx}
\thanks{$^1$ Partially supported by CONACYT Grants $295631$ and $284598$, and partially supported through Luis N\'u\~nez-Betancourt's C\'atedras Marcos Moshinsky Grant.}

\begin{abstract}
In prime characteristic there are important invariants that allow us to measure singularities. For certain cases, it is known that they are rational numbers. In this article, we show this property for Stanley-Reisner rings in several cases.
\end{abstract}

\keywords{Stanley-Reisner rings, $F$-thresholds, $F$-pure thresholds, $a$-invariants, Castelnuovo-Mumford regularity.}
\subjclass[2010]{Primary 13A35,  13F55; Secondary 13D45, 14B05.}

\maketitle

\setcounter{tocdepth}{1}
\tableofcontents
%%%%%%%%%%%%%%%%%%%%%%%%%%%%%%%%%%%%%%%%%%%%%%%%%%%%%%%%%%%%%%%%%%%%%%%%
\section{Introduction}
%%%%%%%%%%%%%%%%%%%%%%%%%%%%%%%%%%%%%%%%%%%%%%%%%%%%%%%%%%%%%%%%%%%%%%%%
Throughout this manuscript $R$ denotes a Noetherian ring of prime characteristic $p$. In characteristic zero, the log canonical threshold, $\mathrm{lct}(f)$, of a polynomial $f$ with coefficients in a field, is an important invariant in birational geometry \cite{BFS}. This number measures the singularities of $f$ near to zero. In positive characteristic, the $F$-pure threshold of an element $f \in R$, denoted $\fpt(f)$, was defined by Takagi and Watanabe \cite{TWFpure}. Roughly speaking, this measures the splitting order of $f$. It is defined by
\begin{align*}
\fpt(f)=\sup \Bigg\{ \frac{a}{p^{e}}\;|\;\mathrm{the \; inclusion}\; Rf^{\frac{a}{p^{e}}} \subseteq R^{1/p^e} \;\mathrm{is \; a \; split} \Bigg\}
\end{align*}
for $f \in R$.

The $F$-pure threshold is considered as analogous to the log canonical threshold, and they share similar properties \cite{TWFpure,MTWFR}. In particular, if $f$ is an element in $\mathbb{Z}[x_1, \ldots, x_n]$, then $\lim\limits_{p \rightarrow \infty} {\fpt(f\;\mathrm{mod}\;p)}=\mathrm{lct}(f)$ \cite{HaraYoshida,MTWFR}.   

In this work, we study a general form of the $F$-pure threshold  called the Cartier threshold. Given $\fa$ and $J$ ideals in $R$, the Cartier threshold of $\fa$ with respect to $J$ is defined as $\ct_J(\mathfrak{a})= \lim\limits_{e \rightarrow \infty} {\frac{b_{\mathfrak{a}}^J(p^e)}{p^e}}$, where $$b_{\mathfrak{a}}^J(p^e)= \max \{t \in \mathbb{N}\;|\; \mathfrak{a}^t \not \subseteq J_e\} \; \& \; J_e=\{f	\in R \; |\; \varphi(f^{1/p^e})\in J, \;\forall \varphi\in\Hom_R(R^{1/p^e}, R )\}.$$ These numbers have been studied in more depth in an upcoming work \cite{DSHNBW}. If we consider $(R,\mathfrak{m},K)$ a local ring or a standard graded $K$-algebra which is $F$-finite and $F$-pure, then $\ct_{\mathfrak{m}}(\mathfrak{a})=\fpt(\mathfrak{a})$. %In the regular case, $\fpt_J(\mathfrak{a})$ is equal to the $F$-threshold of $\mathfrak{a}$ with respect to $J$. By the work of Blickle, Musta{\c{t}}{\u{a}}, and Smith \cite{BMMMSK}, the $F$-pure threshold is a rational number for regular rings.

In this manuscript, we focus on Stanley-Reisner rings. The combinatorial nature of these rings has been useful to 
study their structures in prime characteristic. For instance, in this case one can describe their algebras of Frobenius and Cartier  operators \cite{AMBZ,BZ}.
In this work, we  show that the Cartier threshold of $\mathfrak{a}$ with respect to $J$ in Stanley-Reisner rings is a rational number when $J$ is a radical ideal. %in certain cases.

\begin{theoremx}[{see Theorem \ref{pro1series} and Corollary \ref{mainresult}}]\label{MainThmA}
Let $\mathfrak{a}, \;J$ be two ideals in a Stanley-Reisner ring $R$, such that $\mathfrak{a} \subseteq J$, and $J$ is a radical ideal. Then, the Cartier threshold of $\mathfrak{a}$ with respect to $J$ is a rational number.  
\end{theoremx}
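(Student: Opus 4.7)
The plan is to obtain a concrete monomial description of the ideals $J_e$ arising in the definition of the Cartier threshold, and then reduce the computation of $\ct_J(\mathfrak{a})$ to a rational linear programming problem on the exponent lattice of $R$.

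First, since $J$ is radical in a Stanley-Reisner ring $R = k[x_1, \ldots, x_n]/I_\Delta$, it is a squarefree monomial ideal, and its minimal primes correspond to faces of $\Delta$. Using the explicit description of the Cartier algebra of a Stanley-Reisner ring from \cite{AMBZ, BZ}, I would show that for each $e$ the ideal $J_e$ is itself a monomial ideal with generators that scale combinatorially in $p^e$. More precisely, I would expect $J_e$ to decompose as an intersection indexed by the facets of $\Delta$ (or by the minimal primes of $J$), with each component resembling a bracket power of the corresponding monomial ideal in a polynomial-ring localization of $R$. This is the step where the Stanley-Reisner structure is essential: the Frobenius splittings and their duals on $R$ act diagonally on monomials, and the calculation of $J_e$ reduces to an explicit combinatorial computation on $\Delta$.

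Next, with $J_e$ in hand as a monomial ideal, I would compute $b_\mathfrak{a}^J(p^e)$. When $\mathfrak{a}$ is itself a monomial ideal with generators $x^{\alpha_1}, \ldots, x^{\alpha_m}$, the condition $\mathfrak{a}^t \not\subseteq J_e$ becomes the existence of a nonnegative integer vector $(t_1, \ldots, t_m)$ with $\sum t_i = t$ such that the monomial $x^{t_1 \alpha_1 + \cdots + t_m \alpha_m}$ avoids $J_e$. By the explicit form of $J_e$ from the previous step, this translates into a system of linear inequalities in $(t_1, \ldots, t_m)$ whose right-hand sides are linear in $p^e$. Normalizing by $p^e$ and taking $e \to \infty$ shows that $\ct_J(\mathfrak{a})$ is the optimum of a linear program with rational coefficients, hence a rational number.

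To pass from monomial $\mathfrak{a}$ to the general case, I would exploit that $J_e$ is a monomial ideal: for any monomial ideal $\mathfrak{b}$, an element $f$ lies in $\mathfrak{b}$ if and only if each monomial of $f$ lies in $\mathfrak{b}$. I would argue that $b_\mathfrak{a}^J(p^e)$ only depends on an associated monomial ideal $\mathfrak{a}^{\mathrm{mon}}$ built from $\mathfrak{a}$ (for instance, by taking the smallest monomial ideal containing a suitable set of products of generators), so that $\ct_J(\mathfrak{a}) = \ct_J(\mathfrak{a}^{\mathrm{mon}})$, and invoke the monomial case. The main obstacle I anticipate is precisely this reduction: in characteristic $p$ there are cancellations (already $(x+y)^p = x^p + y^p$) that can make the naive choice of $\mathfrak{a}^{\mathrm{mon}}$ fail to track $\mathfrak{a}^t$ correctly, so I would need either a Gr\"obner-basis style argument with a term order well-adapted to the Cartier structure, or a passage to a faithfully flat extension, to justify that this reduction is lossless. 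Once that is in place, the rationality statement follows from the rational LP of Step~2.
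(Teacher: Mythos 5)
There are two substantive gaps in your plan.

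First, the opening claim --- that a radical ideal $J$ in a Stanley--Reisner ring is automatically a squarefree monomial ideal --- is false. A radical ideal of $R=S/I$ need not be homogeneous in the $\NN^n$-grading at all (e.g.\ $(x-y)$ in $k[x,y]$). The paper handles a general radical $J$ by first writing $J=\bigcap_i\fq_i$ as an intersection of its minimal primes and using $\ct_J(\fa)=\sup_i\ct_{\fq_i}(\fa)$ (Proposition~\ref{teo6}); then, for each prime $\fq$, it localizes at $\fq$ and completes, using $\ct_\fq(\fa)=\fpt(\fa\widehat{R_\fq})$ (Propositions~\ref{teo9} and \ref{teo10}). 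Only after this passage is one in a power-series Stanley--Reisner ring where the relevant ideal (now the maximal ideal) is monomial, and where your combinatorial description of $J_e$ would even have a chance of applying. Without this reduction, the rest of your argument does not get off the ground.

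Second, the reduction from arbitrary $\fa$ to a monomial ideal $\fa^{\mathrm{mon}}$, which you flag yourself as the "main obstacle," is a genuine dead end, and the paper never attempts it. Instead it proves the structural identity $\fq_e=\fq^{[p^e]}+\mathcal{P}(\fq)$ for a monomial prime $\fq$, where $\mathcal{P}(\fq)=\bigcap_s\fq_s$ is the Cartier core (Propositions~\ref{proQ_e}, \ref{proQ_eseries}); passing to $\overline{B}=B/\mathcal{P}(\fq)$ turns Cartier contractions into honest Frobenius powers, so $\ct_\fq(\fa)=c^{\overline{\fq}}(\overline{\fa})$ (Theorem~\ref{pro1series}). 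That $F$-threshold is then computed via Theorem~\ref{pro5}, which says $c^J_R(\fa)=\max_i c^{JS/\fp_i}_{S/\fp_i}(\fa S/\fp_i)$ for monomial $J$ (but arbitrary $\fa$), reducing to $F$-thresholds in \emph{regular} rings, where rationality for arbitrary $\fa$ is already known by Blickle--Musta\c{t}\u{a}--Smith. No monomialization of $\fa$ is needed, and no explicit linear program is set up. Your LP idea is plausible for monomial $\fa$ once the explicit form of $J_e$ is in hand, but since the final step of your reduction does not work, the overall proposal as written does not prove the theorem.
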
 

In order to obtain Theorem \ref{MainThmA}, we need to reduce the computation of $\ct_J(\mathfrak{a})$ to the case where $J$ is a monomial ideal. For this trick, we need to replace $R$ by the completion of a suitable localization. 
Then, the problem is reduced to the regular case by taking a quotient with  respect to the Cartier core (see Definition \ref{defcartiercore}).

We now recall the definition of the $F$-thresholds. They are numbers obtained by comparing ordinary powers versus Frobenius powers. These were introduced in regular rings by Musta{\c{t}}{\u{a}}, Takagi and Watanabe \cite{MTWFR}, and their existence, in the general case, was proved by De Stefani, N\'u\~nez-Betancourt and P\'erez \cite{DSNBP}. These are defined as $c^{J}(\mathfrak{a})=\lim\limits_{e \rightarrow \infty} {\frac{\nu^{J}_{\mathfrak{a}}(p^{e})}{p^{e}}}$, where $\nu^{J}_{\mathfrak{a}}(p^{e})=\max \{m \in \mathbb{N}\; |\; \mathfrak{a}^m \not \subseteq J^{[p^{e}]} \}$, and $\mathfrak{a},\;J \subseteq R$ are ideals.

A recent line of research consists in understanding under which conditions the set of $F$-thresholds is a discrete set of rational numbers. This was proved by Blickle, Musta{\c{t}}{\u{a}}, and Smith \cite{BMMMSK} for an $F$-finite regular ring. Although the $F$-threshold is a rational number in the regular case, this situation is unknown in general Noetherian rings. Trivedi \cite{TR} showed that, in general, the set of $F$-thresholds with respect to the maximal ideal in a local ring is not necessarily discrete. In this paper, we study the  rationality of $F$-thresholds for Stanley-Reisner rings. 

\begin{theoremx}[{see Theorem \ref{pro5}}]\label{MainThmB}
 Let $\mathfrak{a}, \;J$ be two ideals in a Stanley-Reisner ring $R$, such that $\mathfrak{a} \subseteq \sqrt{J}$, and $J$ is a monomial ideal. Then, the $F$-threshold of $\mathfrak{a}$ with respect to $J$ is a rational number. 
\end{theoremx}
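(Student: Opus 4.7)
My plan is to reduce the computation of $c^J(\mathfrak{a})$ in the Stanley--Reisner ring $R = k[x_1,\dots,x_n]/I_\Delta$ to computations in the polynomial rings $R/P_\sigma$, where $\sigma$ runs over the facets of $\Delta$ and $P_\sigma = (x_i : i \notin \sigma)$ are the minimal primes of $R$. The first step will be to establish the identity
\[
I \;=\; \bigcap_{\sigma} \bigl( I + P_\sigma \bigr) \qquad \text{in } R
\]
for every monomial ideal $I \subseteq R$, with $\sigma$ ranging over the facets of $\Delta$. Both sides are monomial ideals, so it is enough to compare nonzero monomials $x^\gamma \in R$; since $\supp(\gamma)$ is a face of $\Delta$, it sits inside some facet $\sigma_0$, and then $x^\gamma \notin P_{\sigma_0}$ forces $x^\gamma \in I$ as soon as $x^\gamma$ lies in the intersection. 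Because $J$ is assumed monomial, so is $J^{[p^e]}$, and the identity will apply with $I = J^{[p^e]}$.

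Next, I would combine this identity with the fact that Frobenius powers commute with quotients to obtain the equivalence
\[
\mathfrak{a}^m \subseteq J^{[p^e]} \quad\Longleftrightarrow\quad \mathfrak{a}^m \, R/P_\sigma \subseteq \bigl( J \, R/P_\sigma \bigr)^{[p^e]} \text{ for every facet } \sigma.
\]
Taking the largest $m$ for which each side fails yields $\nu^J_\mathfrak{a}(p^e) = \max_\sigma \nu^{J R/P_\sigma}_{\mathfrak{a} R/P_\sigma}(p^e)$, and since a maximum over a finite set commutes with the limit in $e$,
\[
c^J(\mathfrak{a}) \;=\; \max_\sigma c^{J R/P_\sigma}\bigl( \mathfrak{a} \, R/P_\sigma \bigr).
\]
Each $R/P_\sigma \cong k[x_i : i \in \sigma]$ is an $F$-finite regular ring, and the hypothesis $\mathfrak{a} \subseteq \sqrt{J}$ descends to $\mathfrak{a} \, R/P_\sigma \subseteq \sqrt{J \, R/P_\sigma}$, ensuring that each $F$-threshold on the right is finite. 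By the theorem of Blickle, Musta\c{t}\u{a}, and Smith \cite{BMMMSK}, each $c^{J R/P_\sigma}\bigl( \mathfrak{a} \, R/P_\sigma \bigr)$ is rational, and therefore so is the finite maximum $c^J(\mathfrak{a})$.

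The hard part I expect is the monomial intersection identity in the first step; the proof is a short combinatorial argument that exploits the fact that every face of $\Delta$ is contained in a facet. The hypothesis that $J$ is monomial is essential here, as it is exactly what guarantees that $J^{[p^e]}$ is monomial and can be broken apart across the minimal primes of $R$; the remaining containment $\mathfrak{a} \subseteq \sqrt{J}$ only intervenes to guarantee finiteness of the $F$-thresholds. Beyond that first step the argument is a clean passage to the regular case where rationality is already known, analogous in spirit to the reduction used for the Cartier threshold in Theorem~\ref{MainThmA}.
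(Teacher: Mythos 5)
Your proposal is correct and follows essentially the same route as the paper's Theorem \ref{pro5}: one decomposes across the minimal primes $\fp_i$ of $R$, uses the monomial-ideal identity $\bigcap_i\big(J^{[p^e]}+\fp_i\big)=J^{[p^e]}+I$ (which holds precisely because $J^{[p^e]}$ and each $\fp_i$ are monomial) to get $\nu^J_{\fa}(p^e)=\max_i\nu^{JS/\fp_i}_{\fa S/\fp_i}(p^e)$, and hence $c^J(\fa)=\max_i c^{JS/\fp_i}(\fa S/\fp_i)$, whose rationality then follows from Blickle, Musta{\c{t}}{\u{a}}, and Smith in the regular quotients. Your facet-language phrasing and the observation that $\mathfrak{a}\subseteq\sqrt{J}$ is only needed for finiteness are cosmetic differences, not a different argument.
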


The key idea to prove Theorem \ref{MainThmB} is to work modulo the minimal primes, which yields a regular ring. The result follows from comparing the $F$-thresholds of $R$ versus these quotients.
We point out that Theorem \ref{MainThmB} is a key component of the proof of Theorem \ref{MainThmA}.

The Castelnuovo-Mumford regularity is an invariant that measures the complexity of the free resolution of a standard graded $K$-algebra $(R,\m,K)$. The growth of $\reg(R/J^{[p^e]})$ 
has been intensively studied due to its relation to  discreteness of $F$-
jumping coefficients \cite{KatzmanZhang,KSSZ,ZhangRegFrob}, localization of tight closure \cite{KatzmanComplexityFrob,HunekeLC}, and existence of the generalized Hilbert-Kunz multiplicity \cite{DaoSmirnov,VHK}. 
We recall that  the Castelnuovo-Mumford regularity can be computed in terms of the  $a$-invariants introduced by Goto and Watanabe \cite{GW1}. 
In this manuscript, we provide  a formula for the limits of $\reg(R/J^{[p^e]})$.

\begin{theoremx}[{see Theorem \ref{thmregularity}}]\label{MainThmC}
Let $J$ be a homogeneous ideal in a Stanley-Reisner ring $R$. Then, %the limit 
\begin{align*}
\lim\limits_{e \rightarrow \infty} {\frac{\reg(R/J^{[p^e]})}{p^{e}}}&=\max_{\substack{1 \leq i \leq d\\ \alpha \in \mathcal{A}'}}\{a_i(S/(J_\alpha+J))+|\alpha|\},
\end{align*}
where $\mathcal{A}'=\{\alpha \in \mathbb{N}^n\;|\;0\leq \alpha_i\leq 1\;$for$\;i=1,\ldots,n\}$, $J_{\alpha}=(I:x^\alpha)$, and $d=\max\{\dim(S/(J_\alpha+J))\;|\;\alpha \in \cA'\}$. In particular, this limit is an integer number.  
\end{theoremx}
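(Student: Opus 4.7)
The plan is to combine the identity $\reg(M) = \max_i\{a_i(M)+i\}$ with the squarefree combinatorial structure of the Stanley--Reisner ideal $I$, in order to reduce the asymptotic computation of $\reg(R/J^{[p^e]})$ to an asymptotic $a$-invariant computation in the polynomial ring $S$.

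More specifically, I would enumerate the squarefree generators of $I$ as $x^{\alpha_1},\ldots,x^{\alpha_s}$, set $I_k := (x^{\alpha_1},\ldots,x^{\alpha_k})$, and for each $k$ use the short exact sequence
\begin{equation*}
0 \to \frac{S}{(I_{k-1}+J^{[p^e]}):x^{\alpha_k}}(-|\alpha_k|) \xrightarrow{\cdot x^{\alpha_k}} \frac{S}{I_{k-1}+J^{[p^e]}} \to \frac{S}{I_k+J^{[p^e]}} \to 0.
\end{equation*}
The degree shift $-|\alpha_k|$ is precisely the source of the summand $+|\alpha|$ appearing in the final formula. Iterating these short exact sequences and passing to the long exact sequence in local cohomology, one can express $a_i(R/J^{[p^e]})$ as a maximum over $\alpha \in \cA'$ of $a_j(S/(J_\alpha+J^{[p^e]}))$ shifted by $|\alpha|$. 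Squarefreeness of each $x^{\alpha_k}$ is what controls the iterated colon ideals and identifies them with the $J_\alpha = (I:x^\alpha)$.

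For each fixed $\alpha$, the ring $S/(J_\alpha+J^{[p^e]})$ is a Frobenius-power quotient in the polynomial ring $S$ modulo the monomial ideal $J_\alpha$, and here one invokes the known asymptotic behavior of $a$-invariants of Frobenius-power quotients in polynomial rings. Combined with the shifts $|\alpha|$ accumulated from the iterated filtration, this assembles into
\begin{equation*}
\lim_e \frac{a_i(R/J^{[p^e]})}{p^e} = \max_{\alpha \in \cA'}\{a_i(S/(J_\alpha+J)) + |\alpha|\}.
\end{equation*}
Since $\reg(M) = \max_i\{a_i(M)+i\}$ and $i/p^e \to 0$, one commutes $\max_i$ with the limit to obtain the claimed formula. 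Integrality is immediate because each term on the right is an integer, and only indices $i$ with $H^i_{\mathfrak{m}}$ nonzero contribute, which is why $i$ ranges over $1 \leq i \leq d$.

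The main obstacle will be analyzing the colon $(I_{k-1}+J^{[p^e]}):x^{\alpha_k}$ when $J$ is an arbitrary (non-monomial) homogeneous ideal. One must use squarefreeness of $x^{\alpha_k}$ together with a primary-decomposition or associated-prime argument to ensure that the Frobenius-power piece stays controlled uniformly in $e$. A further delicate point is verifying that the assembly of long exact sequences produces a maximum over the full cube $\cA' = \{0,1\}^n$ rather than only over the set of generating exponents of $I$; the non-generator $\alpha$'s enter through the connecting homomorphisms in the iterated long exact sequences.
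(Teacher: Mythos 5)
Your proposal is a genuinely different route from the paper's, but it contains a conceptual error that undermines the plan.

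The paper never uses an SES filtration. Instead it invokes the Frobenius $p$-th root decomposition (Proposition \ref{propo 2}),
\begin{align*}
(R/J^{[p^e]})^{1/p^e} \cong \bigoplus_{\alpha \in \mathcal{A}} \bigl(S/(J_\alpha + J)\bigr)\,x^{\alpha/p^e},
\end{align*}
with $\mathcal{A}=\{0,\dots,p^e-1\}^n$. Here the shift is by $|\alpha|/p^e$ in the $\frac{1}{p^e}\mathbb{Z}$-grading, so after rescaling by $p^e$ it contributes $|\alpha|$ to $a_i(R/J^{[p^e]})$, a quantity that grows like $p^e-1$ as $\alpha$ ranges up to $(p^e-1,\dots,p^e-1)$. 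Lemma \ref{lemsuport} then collapses the index set to $\mathcal{A}'=\{0,1\}^n$, since $J_\alpha$ depends only on $\Supp(\alpha)$, and $|\alpha|(p^e-1)/p^e \to |\alpha|$. This is where the $+|\alpha|$ and the full Boolean cube really come from.

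By contrast, your SES
\begin{align*}
0 \to \frac{S}{(I_{k-1}+J^{[p^e]}):x^{\alpha_k}}(-|\alpha_k|) \to \frac{S}{I_{k-1}+J^{[p^e]}} \to \frac{S}{I_k+J^{[p^e]}} \to 0
\end{align*}
carries a \emph{fixed} degree shift $-|\alpha_k|$, independent of $e$. After dividing by $p^e$ and passing to the limit, this shift contributes $|\alpha_k|/p^e \to 0$. So the assertion that ``the degree shift $-|\alpha_k|$ is precisely the source of the summand $+|\alpha|$ in the final formula'' is false; that term cannot survive in the asymptotic formula via your mechanism. In the examples one can check directly that the numerical value $\max\{a_i(S/(J_\alpha+J))+|\alpha|\}$ is instead matched by the $p^e$-scale growth of $a_j\bigl(S/((I_{k-1}+J^{[p^e]}):x^{\alpha_k})\bigr)$, and to extract that growth you would need to already understand the asymptotics you are trying to prove.

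Two further gaps compound this. First, as you acknowledge, the colon $(I_{k-1}+J^{[p^e]}):x^{\alpha_k}$ is not of the form (fixed ideal) $+\, J^{[p^e]}$ for general homogeneous $J$, so the iterated filtration does not stay inside the class of ``Frobenius-power quotients'' you plan to control. Second, your filtration is indexed by the generating exponents of $I$, whereas the formula requires a maximum over all of $\{0,1\}^n$; the suggestion that the missing $\alpha$'s ``enter through the connecting homomorphisms'' is not substantiated and does not appear to be correct, since the connecting maps only involve the generator exponents. Finally, the step that invokes ``the known asymptotic behavior of $a$-invariants of Frobenius-power quotients in polynomial rings'' is essentially the $I=0$ case of the theorem being proved, which makes this part of the argument circular unless you supply an independent proof of that case.

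The short version: the paper's proof is not an SES induction; it is a one-step application of the $p$-th root decomposition, and the source of the $+|\alpha|$ term is the $\frac{1}{p^e}$-fractional grading, not a constant degree shift. Your proposal misidentifies that source, and the remaining machinery (colon control, index set, asymptotics in the regular case) would all need substantial new arguments.
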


%%%%%%%%%%%%%%%%%%%%%%%%%%%%%%%%%%%%%%%%%%%%%%%%%%%%%%%%%%%%%%%%%%%%%%%%%
\section{Stanley-Reisner Rings}\label{SecSR}
%%%%%%%%%%%%%%%%%%%%%%%%%%%%%%%%%%%%%%%%%%%%%%%%%%%%%%%%%%%%%%%%%%%%%%%%%

Throughout this section we use the following notation.
\begin{notation} \label{notationSR}
We denote $S=K[x_1,  \ldots ,x_n]$ with $K$ an $F$-finite field of prime characteristic $p$. Let $I$ be a squarefree monomial ideal of $S$. Let $I=\bigcap_{i=1}^{l}\mathfrak{p}_i$ such that $\mathfrak{p}_i \not \subseteq \mathfrak{p}_j$ for $i \not = j$ and $\mathfrak{p}_1, \ldots,\mathfrak{p}_l$ are generated by variables. We take $R=S/I$.
\end{notation}

These rings have mild singularities, for instance, they are $F$-pure. They also have combinatorial structure given by simplicial complexes.

Suppose that $\fa \subseteq R$ is an ideal. We abuse notation and denote the inverse image of $\fa \subseteq R$ under the natural projection $S \longrightarrow S/I$ by $\fa \subseteq S$.

We now characterize the ring of $p$-th roots of $R$ in terms of ideal quotients.
\begin{proposition} \label{propo 2}
If $q=p^{e}$, where $e$ is a nonnegative integer, then 
\begin{align*}
R^{1/q}=S^{1/q} / I^{1/q} \cong \bigoplus_{\substack{1 \leq i \leq s\\ \alpha \in \mathcal{A}}}S/J_{\alpha}(a_ix^\alpha)^{1/q},
\end{align*}
with $\mathcal{A}=\{\alpha \in \mathbb{N}^n\;|\;0\leq \alpha_i\leq q-1\;$for$\;i=1,\ldots, n\}$, $\mathcal{B}=\{{a_i}^{1/q}\;|\;i=1,\ldots,s\}$ is a base of $K^{1/q}$ as $K$-vector space, and $J_{\alpha}=(I:x^\alpha)$.
\end{proposition}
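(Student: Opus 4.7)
The plan is to exploit an explicit free $S$-module decomposition of $S^{1/q}$. Since $K$ is $F$-finite with $K$-basis $\{a_i^{1/q}\}_{i=1}^{s}$ of $K^{1/q}$ and $S=K[x_1,\dots,x_n]$, the unique representation of every exponent $\gamma\in\NN^n$ as $\gamma=\alpha+q\delta$ with $\alpha\in\mathcal{A}$ and $\delta\in\NN^n$ gives
\[
S^{1/q} \;=\; \bigoplus_{\substack{1\le i \le s\\ \alpha \in \mathcal{A}}} S\cdot (a_i x^\alpha)^{1/q}.
\]
The proposed isomorphism is then equivalent to the statement that, under this decomposition, $I^{1/q}$ sits as the submodule $\bigoplus_{i,\alpha} J_\alpha\cdot(a_i x^\alpha)^{1/q}$; once this is shown, quotienting summand-by-summand yields the claim.

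To verify this, I would characterize the condition $g\in I^{1/q}$ via $g^q\in I$. Writing $g=\sum_{i,\alpha} f_{i,\alpha}\,(a_i x^\alpha)^{1/q}$ with $f_{i,\alpha}=\sum_\beta d_{i,\alpha,\beta}\,x^\beta$ and $d_{i,\alpha,\beta}\in K$, the Frobenius in characteristic $p$ yields
\[
g^q \;=\; \sum_{i,\alpha} f_{i,\alpha}^{q}\,a_i\,x^\alpha \;=\; \sum_{\alpha,\beta}\Bigl(\sum_i d_{i,\alpha,\beta}^{q}\,a_i\Bigr) x^{\alpha+q\beta}.
\]
The constraint $0\le \alpha_j\le q-1$ makes the exponents $\alpha+q\beta$ pairwise distinct for distinct $(\alpha,\beta)$, so this is genuinely the monomial expansion of $g^q$.

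Two observations then close the argument. First, $\bigl(\sum_i d_{i,\alpha,\beta}\,a_i^{1/q}\bigr)^{q}=\sum_i d_{i,\alpha,\beta}^{q}\,a_i$, so the scalar coefficient of $x^{\alpha+q\beta}$ vanishes iff $\sum_i d_{i,\alpha,\beta}\,a_i^{1/q}=0$, which by $K$-linear independence of $\{a_i^{1/q}\}$ forces every $d_{i,\alpha,\beta}$ to be zero. Hence $g^q\in I$ if and only if $x^{\alpha+q\beta}\in I$ whenever some $d_{i,\alpha,\beta}\neq 0$. Second, since $I$ is squarefree and $x^{\alpha+q\beta}$ and $x^{\alpha+\beta}$ share the same support, $x^{\alpha+q\beta}\in I$ iff $x^{\alpha+\beta}\in I$. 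Combining these, $g\in I^{1/q}$ iff $f_{i,\alpha}\,x^\alpha\in I$ for every $i,\alpha$, that is, iff $f_{i,\alpha}\in J_\alpha$ for every $i,\alpha$.

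The main subtlety I anticipate is the second observation above: the reduction from the Frobenius-twisted membership $x^{\alpha+q\beta}\in I$ to the untwisted $x^{\alpha+\beta}\in I$ depends essentially on $I$ being squarefree, and without it the annihilator of $(a_i x^\alpha)^{1/q}$ in $S^{1/q}/I^{1/q}$ would not simplify to $J_\alpha=(I:x^\alpha)$. The remaining ingredients---the explicit basis of $S^{1/q}$, the Frobenius computation, and the $K$-linear independence of $\{a_i^{1/q}\}$---are then routine bookkeeping.
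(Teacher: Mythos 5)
Your proof is correct, and it follows the same skeleton as the paper's: both start from the free $S$-module decomposition
\[
S^{1/q}=\bigoplus_{\substack{1\le i\le s\\ \alpha\in\mathcal A}} S\cdot (a_i x^\alpha)^{1/q},
\]
and both reduce the statement to identifying $I^{1/q}$ with the graded submodule $\bigoplus_{i,\alpha} J_\alpha\cdot(a_i x^\alpha)^{1/q}$. Where you diverge from the paper is only in the bookkeeping used to establish that identification. The paper writes down the surjection $\varphi$ explicitly and checks $\ker\varphi = I^{1/q}$ in two inclusions; for the inclusion $I^{1/q}\subseteq\ker\varphi$ it reduces to a generator $x^\theta(a_ix^\alpha)^{1/q}x^{\beta/q}$ with $x^\beta$ a squarefree generator of $I$, and then performs the normalization step of choosing $\gamma\in\{0,1\}^n$ with $\alpha'=\alpha+\beta-q\gamma\in\mathcal A$. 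You instead expand $g^q$ completely in the monomial $K$-basis and use the pairwise distinctness of the exponents $\alpha+q\beta$ together with the $K$-linear independence of $\{a_i^{1/q}\}$ to reduce everything to monomial membership; you then replace the paper's normalization step with the equivalent support observation that $\mathrm{supp}(\alpha+q\beta)=\mathrm{supp}(\alpha+\beta)$, so $x^{\alpha+q\beta}\in I$ iff $x^{\alpha+\beta}\in I$ for a squarefree $I$. Both devices are the place where squarefreeness enters, and they carry the same mathematical content; your formulation makes the dependence on squarefreeness slightly more transparent and dispenses with defining $\varphi$ explicitly, while the paper's two-inclusion argument is a bit more hands-on with the roots. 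In short: same approach, a marginally more streamlined execution on your side.
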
  
\begin{proof}
Each element $r^{1/q} \in S^{1/q}$ can be written uniquely as    
$$r^{1/q} = \bigoplus_{\substack{1 \leq i \leq s\\ \alpha \in \mathcal{A}}}r_{i,\alpha}(a_ix^\alpha)^{1/q},$$ where $r_{i,\alpha} \in S$. We take 
$$\varphi:S^{1/q}\longrightarrow \bigoplus_{\substack{1 \leq i \leq s\\ \alpha \in \mathcal{A}}}S/J_{\alpha}(a_ix^\alpha)^{1/q},$$ 
defined by 
$$\varphi(r^{1/q})=\bigoplus_{\substack{1 \leq i \leq s\\ \alpha \in \mathcal{A}}}(r_{i,\alpha}+J_{\alpha})(a_ix^\alpha)^{1/q}.$$
We have that $\varphi$ is a surjective $S$-linear morphism.

We claim that $\ker\varphi = I^{1/q}$. First, we show $\ker\varphi \subseteq I^{1/q}$. Let $r^{1/q}\in \ker\varphi$. It is sufficient to consider $r^{1/q}=x^{\theta}(a_ix^\alpha)^{1/q}$ for some $\theta \in \mathbb{N}^n, \;\alpha \in \mathcal{A}$, and $i \in \{1,\ldots,s\}$. Hence, $0=\varphi(r^{1/q})=(x^{\theta}+J_{\alpha})(a_ix^\alpha)^{1/q}$. Thus, $x^{\theta} \in J_{\alpha}$. This implies that $a_ix^{\alpha+\theta}\in I$, and so, $x^{\theta/q}(a_ix^\alpha)^{1/q} \in I^{1/q}$. It follows that $r^{1/q}=x^{\theta}(a_ix^\alpha)^{1/q}=(x^{\theta/q})^q(a_ix^\alpha)^{1/q} \in I^{1/q}$. 

Now, we show $I^{1/q} \subseteq \ker\varphi$. Let $r^{1/q}\in I^{1/q}$. It is enough to consider $r^{1/q}=x^{\theta}(a_ix^\alpha)^{1/q}x^{\beta/q}$ with $\theta \in \mathbb{N}^n, \;\alpha \in \mathcal{A},\;i \in\{1,\ldots,s\}$, and $x^{\beta}$ a generator of $I$. Since $0 \leq \alpha_j \leq q-1$ and $0 \leq \beta_j \leq 1$ for every $1 \leq j \leq n$, there exists $\gamma \in \{0,1\}^n$ %with $0 \leq \gamma_j \leq 1$ 
such that $\alpha+ \beta - q\gamma \in \mathcal{A}$. Let $\alpha'=\alpha+ \beta - q\gamma$. We note that $x^{\theta+\gamma}(a_ix^{\alpha'}) \in I$. As a consequence, $x^{\theta+ \gamma} \in J_{\alpha'}$. Furthermore, $r^{1/q}=x^{\theta+ \gamma} (a_ix^{\alpha'})^{1/q}$. Subsequently, $\varphi(r^{1/q})=(x^{\theta+\gamma}+J_{\alpha'})(a_ix^{\alpha'})^{1/q}=0$. Thus, $r^{1/q} \in \ker  \varphi$.  

It follows,
\begin{align*}
R^{1/q} \cong \bigoplus_{\substack{1 \leq i \leq s\\ \alpha \in \mathcal{A}}}S/J_{\alpha}(a_ix^\alpha)^{1/q}
\end{align*} 
as $S$-module. Therefore, they are isomorphic as $R$-modules. 
\end{proof}

\begin{remark}\label{remarkdimension}
We follow Notation \ref{notationSR}. Let $\mathfrak{q}$ be a prime ideal of $S$. Suppose that $\mathfrak{p}_1, \ldots, \mathfrak{p}_r \subseteq \mathfrak{q}$ with $r \leq l$, $\mathfrak{p}_j \not \subseteq \mathfrak{q}$ for $r<j$, and $(x_1, \ldots, x_u)=\sum_{i=1}^{r}\mathfrak{p}_i$.

Let $\widetilde{\mathfrak{q}}_0,\ldots, \widetilde{\mathfrak{q}}_t \in \Spec S_\mathfrak{q}$ be such that $(x_1,\ldots,x_u)S_\mathfrak{q}=\widetilde{\mathfrak{q}}_0 \varsubsetneqq \widetilde{\mathfrak{q}}_1   \varsubsetneqq \ldots \varsubsetneqq \widetilde{\mathfrak{q}}_t$. There exist $\mathfrak{q}_0,  \ldots,  \mathfrak{q}_t \in \Spec S$, where $\mathfrak{q}_i \subseteq \mathfrak{q}$ and $\mathfrak{q}_i=\widetilde{\mathfrak{q}}_i \cap S$. We have that
\begin{align*}
(0)\varsubsetneqq (x_1)\varsubsetneqq (x_1,x_2)\varsubsetneqq   \ldots\varsubsetneqq (x_1, \ldots,x_u)=\mathfrak{q}_0 \varsubsetneqq \mathfrak{q}_1 \varsubsetneqq \ldots \varsubsetneqq \mathfrak{q}_t \subseteq \mathfrak{q}
\end{align*}
is a chain of prime ideals in $S$ with length $u+t$, and so, $u+t \leq \Ht(\mathfrak{q})$. Hence, $t \leq \Ht(\mathfrak{q})-u$. Then, $\dim {S_\mathfrak{q}/(x_1, \ldots,x_u)S_\mathfrak{q}} \leq \Ht(\mathfrak{q})-u$. Therefore, 
$$\Ht(\mathfrak{q})-u=\dim {S_\mathfrak{q}/(x_1, \ldots,x_u)S_\mathfrak{q}}.$$

In particular, if we take $A=\widehat{S_\mathfrak{q}}$, we have that 
$$\dim A-u=\dim {A/(x_1, \ldots, x_u)A}.$$
Since $A$ is a complete regular local ring, $A \cong L[[x_1,\ldots,x_u,y_1,\ldots ,y_t]]$, where $K \subseteq L$ is a field extension. Moreover, we have that $IA=\bigcap_{i=1}^{l}\mathfrak{p}_iA$ is squarefree monomial ideal of $A$ in variables $x_1, \ldots, x_u$. We denote $\underline{x}^\theta={x_1}^{\theta_1}\cdots {x_u}^{\theta_u}{y_1}^{\theta_{u+1}}\cdots {y_t}^{\theta_{u+t}}$. We take $B=A/IA$ and $\mathfrak{m}$ its maximal ideal.  
\end{remark} 

\begin{proposition} \label{propo4} 
If $q=p^{e}$, where $e$ is a nonnegative integer, then 
\begin{align*}
B^{1/q} \cong \bigoplus_{\substack{1 \leq i \leq s\\ \alpha \in \mathcal{A} }}A/J_{\alpha}(a_i\underline{x}^\alpha)^{1/q},
\end{align*}
with $ \mathcal{A}=\{\alpha \in \mathbb{N}^{u+t}\;|\;0\leq \alpha_i\leq q-1\;$for$\;i=1, \ldots, u+t\}$, $\mathcal{B}=\{{a_i}^{1/q}\;|\;i=1, \ldots,s\}$ is a base of $L^{1/q}$ as $L$-vector space, and $J_{\alpha}=(IA:\underline{x}^\alpha)$.
\end{proposition}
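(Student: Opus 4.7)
The strategy is to mirror the proof of Proposition \ref{propo 2}, but in the setting of the complete regular local ring $A = L[[x_1,\ldots,x_u,y_1,\ldots,y_t]]$. The first step is to record that $A^{1/q}$ is free as an $A$-module with the explicit basis $\{(a_i\underline{x}^\alpha)^{1/q} : 1 \leq i \leq s,\ \alpha \in \mathcal{A}\}$: this follows from writing each power series in $A^{1/q}$ uniquely as $\sum_{i,\alpha} r_{i,\alpha}\,(a_i\underline{x}^\alpha)^{1/q}$ with $r_{i,\alpha} \in A$, exactly as in the polynomial case, using that raising to the $q$-th power sends the power series ring $A^{1/q} = L^{1/q}[[x_1^{1/q},\ldots,y_t^{1/q}]]$ onto $A$ digit-by-digit on monomials.

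Once this freeness is established, I would define an $A$-linear map
\[
\varphi: A^{1/q} \longrightarrow \bigoplus_{\substack{1 \leq i \leq s\\ \alpha \in \mathcal{A}}} A/J_\alpha\cdot (a_i\underline{x}^\alpha)^{1/q},\qquad \sum r_{i,\alpha}(a_i\underline{x}^\alpha)^{1/q} \longmapsto \bigoplus (r_{i,\alpha}+J_\alpha)(a_i\underline{x}^\alpha)^{1/q},
\]
which is manifestly surjective. It then suffices to prove $\ker\varphi = (IA)^{1/q}$, since this gives $B^{1/q} = A^{1/q}/(IA)^{1/q}$ the claimed decomposition as an $A$-module and hence as a $B$-module.

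For the containment $\ker\varphi \subseteq (IA)^{1/q}$, I would reduce to a single basis element $r^{1/q} = \underline{x}^\theta(a_i\underline{x}^\alpha)^{1/q}$: vanishing under $\varphi$ forces $\underline{x}^\theta \in J_\alpha$, i.e., $a_i\underline{x}^{\alpha+\theta} \in IA$, from which $r^{1/q} = (\underline{x}^{\theta/q})^q(a_i\underline{x}^\alpha)^{1/q} \in (IA)^{1/q}$. For the reverse containment, I would use that $IA = \bigcap \mathfrak{p}_i A$ is still a squarefree monomial ideal in the $x$-variables only, so its generators are squarefree monomials $\underline{x}^\beta$ with $\beta \in \{0,1\}^n$ (having zero entries in the $y$-slots). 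Writing a typical generator of $(IA)^{1/q}$ as $\underline{x}^\theta(a_i\underline{x}^\alpha)^{1/q}\underline{x}^{\beta/q}$, I can find $\gamma \in \{0,1\}^{u+t}$ such that $\alpha' := \alpha+\beta-q\gamma \in \mathcal{A}$, rewrite the element as $\underline{x}^{\theta+\gamma}(a_i\underline{x}^{\alpha'})^{1/q}$, and observe $\underline{x}^{\theta+\gamma} \in J_{\alpha'}$ because $a_i\underline{x}^{\alpha+\theta+\beta} \in IA$ forces $\underline{x}^{\theta+\gamma} \cdot \underline{x}^{\alpha'}$ to lie in $IA$. Hence the element is in $\ker\varphi$.

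The only real subtlety — and what I would flag as the main technical point — is the move from polynomials to power series: I want to make sure that the uniqueness of the expansion in the basis $\{(a_i\underline{x}^\alpha)^{1/q}\}$ is valid for power series (so that $\varphi$ is well-defined on all of $A^{1/q}$, not just on the polynomial subring) and that one can perform the containment arguments above termwise. Once this is verified via the standard $q$-adic digit expansion for power series over a field, the proof of Proposition \ref{propo 2} carries over essentially unchanged.
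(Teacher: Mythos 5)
Your proposal is correct and takes essentially the same approach as the paper, which simply states that the proof is analogous to Proposition~\ref{propo 2} without spelling out the details. The one substantive addition you make — flagging and justifying that the $q$-adic digit decomposition gives a genuine free $A$-module basis of $A^{1/q}$ in the power series setting, so that the map $\varphi$ is well-defined and the termwise kernel arguments transfer — is precisely the point implicitly covered by the paper's ``analogous'' and is handled correctly.
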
 
\begin{proof}
%It follows of Proposition \ref{propo 2}, applying $- \otimes_S A$. 
The proof is analogous to Proposition \ref{propo 2}.
\end{proof}

%%%%%%%%%%%%%%%%%%%%%%%%%%%%%%%%%%%%%%%%%%%%%%%%%%%%%%%%%%%%%%%%%%%%%%%%%
\section{$F$-Thresholds}
%%%%%%%%%%%%%%%%%%%%%%%%%%%%%%%%%%%%%%%%%%%%%%%%%%%%%%%%%%%%%%%%%%%%%%%%%

The $F$-thresholds were introduced by Musta{\c{t}}{\u{a}}, Takagi and Watanabe \cite{MTWFR} for $F$-finite regular local rings of prime characteristic. Subsequently, in work with Huneke \cite{HMTWF-thresholdsgeneralcase}, they defined $F$-thresholds in general rings of positive characteristic, provided the limit exists. The existence of these invariants in the general case is proved in the work of De Stefani, N\'u\~nez-Betancourt and P\'erez \cite{DSNBP}.

Our main goal is to describe the $F$-thresholds of Stanley-Reisner rings, when we have monomial ideals.

%-----------------------------------------------------------------------

\subsection{Definition and First Properties}
In this subsection $R$ denotes a ring of prime characteristic $p$. We discuss properties related to $F$-thresholds.
\begin{definition} Let $R$ be a ring. Given $\mathfrak{a}, \; J$ ideals inside $R$ such that $\mathfrak{a} \subseteq \sqrt{J}$, we define 
\begin{align*}
\nu^{J}_{\mathfrak{a}}(p^{e})&=\max \{m \in \mathbb{N}\; |\; \mathfrak{a}^m \not \subseteq J^{[p^{e}]} \}.
\end{align*}
\end{definition}

\begin{lemma}[{\cite[Lemma $3.3$]{DSNBP}}]\label{lem4} 
Let $R$ be a ring, and $\mathfrak{a}, \; J$ ideals of $R$ such that $\mathfrak{a} \subseteq \sqrt{J}$. Then,  
\begin{align*}
\frac{\nu^{J}_{\mathfrak{a}}(p^{e_1+e_2})}{p^{e_1+e_2}}-\frac{\nu^{J}_{\mathfrak{a}}(p^{e_1})}{p^{e_1}} \leq \frac{\mu(\mathfrak{a})}{p^{e_1}}
\end{align*}
for every $e_1, \;e_2 \in \mathbb{N}.$
\end{lemma}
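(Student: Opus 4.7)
The plan is to convert the desired inequality into a submultiplicativity statement for $\nu^J_{\fa}$. Setting $q = p^{e_2}$, multiplying through by $p^{e_1+e_2}$, and rearranging, it suffices to prove
$$\nu^{J}_{\fa}(p^{e_1+e_2}) \leq q\cdot \nu^{J}_{\fa}(p^{e_1}) + \mu(\fa)(q-1),$$
since dividing by $p^{e_1+e_2}=qp^{e_1}$ and using $(q-1)/q \leq 1$ recovers the stated bound.

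The key technical input is the combinatorial containment
$$\fa^{(k-1)q + \mu(q-1)+1} \subseteq (\fa^{k})^{[q]} \qquad \text{for every } k\geq 1,$$
where $\mu=\mu(\fa)$. To establish it, I would fix a minimal generating set $f_1,\ldots,f_\mu$ of $\fa$ and examine an arbitrary monomial generator $f_1^{a_1}\cdots f_\mu^{a_\mu}$ of the left-hand side, with $\sum a_i = (k-1)q + \mu(q-1)+1$. Performing base-$q$ division $a_i = qb_i + r_i$ with $0\leq r_i \leq q-1$ and using $\sum r_i \leq \mu(q-1)$ forces $q\sum b_i \geq (k-1)q+1$, so $\sum b_i \geq k$. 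Choosing integers $0\leq c_i\leq b_i$ with $\sum c_i = k$ (possible because $\sum b_i \geq k$), I can factor
$$\prod_{i} f_i^{a_i} \;=\; \Bigl(\prod_{i} f_i^{c_i}\Bigr)^{q}\cdot \prod_{i} f_i^{a_i - q c_i},$$
which lies in $(\fa^{k})^{[q]}$ since $\prod_i f_i^{c_i}\in \fa^{k}$ and each exponent $a_i - qc_i \geq 0$.

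With the containment in hand, I would specialize to $k = \nu^{J}_{\fa}(p^{e_1}) + 1$. By the very definition of $\nu^{J}_{\fa}(p^{e_1})$, we have $\fa^{k}\subseteq J^{[p^{e_1}]}$, and applying the Frobenius $e_2$-th iterate yields $(\fa^{k})^{[q]} \subseteq (J^{[p^{e_1}]})^{[q]} = J^{[p^{e_1+e_2}]}$. Combined with the containment above, this gives $\fa^{\,\nu^{J}_{\fa}(p^{e_1})\cdot q + \mu(q-1)+1} \subseteq J^{[p^{e_1+e_2}]}$, so by the maximality in the definition of $\nu^{J}_{\fa}(p^{e_1+e_2})$,
$$\nu^{J}_{\fa}(p^{e_1+e_2}) \;\leq\; \nu^{J}_{\fa}(p^{e_1})\cdot q + \mu(q-1),$$
which is exactly the submultiplicative bound isolated in the first paragraph.

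The main obstacle is pinning down the sharp exponent in the combinatorial containment. A looser version such as $\fa^{kq + \mu(q-1)} \subseteq (\fa^{k})^{[q]}$ — obtained by the same pigeonhole but with a less tight count on $\sum b_i$ — still works, but produces the constant $\mu+1$ in the final inequality rather than the desired $\mu$. The refinement of starting the count one step earlier, giving $\sum b_i \geq k$ on the nose from $\sum a_i = (k-1)q+\mu(q-1)+1$, is exactly what buys the optimal constant $\mu(\fa)$ appearing on the right-hand side of the lemma.
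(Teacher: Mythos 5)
The paper does not prove this lemma itself; it quotes \cite[Lemma 3.3]{DSNBP} directly, so there is no internal proof to compare against. Your argument is correct and is the standard one for results of this type. The reduction to the integer inequality $\nu^{J}_{\fa}(p^{e_1+e_2}) \leq p^{e_2}\nu^{J}_{\fa}(p^{e_1}) + \mu(\fa)(p^{e_2}-1)$ is exactly what is needed, and the pigeonhole containment $\fa^{(k-1)q+\mu(q-1)+1}\subseteq(\fa^k)^{[q]}$, verified on monomial generators via base-$q$ division of the exponents, is sound: from $\sum a_i=(k-1)q+\mu(q-1)+1$ and $\sum r_i\le\mu(q-1)$ you correctly deduce $q\sum b_i\ge(k-1)q+1$, hence $\sum b_i\ge k$ since $\sum b_i$ is an integer, and the factorization into a $q$-th power of an element of $\fa^k$ times something in $R$ follows. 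Specializing $k=\nu^{J}_{\fa}(p^{e_1})+1$ and applying $[p^{e_2}]$ closes the argument. Your remark about the sharp vs.\ loose constant is also accurate. One small point worth making explicit in a final write-up: the hypothesis $\fa\subseteq\sqrt J$ is what guarantees $\nu^{J}_{\fa}(p^{e_1})$ is finite (take $N$ with $\fa^N\subseteq J$ and apply your containment with $k=N$), which you use implicitly when setting $k=\nu^{J}_{\fa}(p^{e_1})+1$.
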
  

\begin{theorem}[{\cite[Theorem $3.4$]{DSNBP}}]  
Let $R$ be a ring, and $\mathfrak{a}, \; J$ be two ideals in $R$ such that $\mathfrak{a} \subseteq \sqrt{J}$. Then, $\lim\limits_{e \rightarrow \infty} {\frac{\nu^{J}_{\mathfrak{a}}(p^{e})}{p^{e}}} $ exists.
\end{theorem}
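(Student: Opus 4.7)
The plan is to combine the subadditivity-type estimate of Lemma \ref{lem4} with a boundedness check, and then observe that the $\limsup$ and $\liminf$ of the sequence must coincide. Write $c_e := \nu^{J}_{\mathfrak{a}}(p^e)/p^e$ throughout.

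First I would establish boundedness. Since $\mathfrak{a} \subseteq \sqrt{J}$, there exists a positive integer $N$ with $\mathfrak{a}^N \subseteq J$. Writing $\mu(J)$ for the minimal number of generators of $J$, a standard pigeonhole argument on products of generators gives
\begin{equation*}
J^{\mu(J)(p^e - 1) + 1} \subseteq J^{[p^e]},
\end{equation*}
so combining the two inclusions yields $\mathfrak{a}^{N(\mu(J)(p^e - 1) + 1)} \subseteq J^{[p^e]}$. Hence $\nu^{J}_{\mathfrak{a}}(p^e) < N \mu(J)(p^e - 1) + N$, which shows $(c_e)$ is bounded above by a constant independent of $e$; in particular both $\limsup_e c_e$ and $\liminf_e c_e$ are finite real numbers.

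Next I would apply Lemma \ref{lem4}. Fix $e_1 \in \mathbb{N}$; for every $e_2 \geq 0$ the lemma yields $c_{e_1 + e_2} \leq c_{e_1} + \mu(\mathfrak{a})/p^{e_1}$. Taking the supremum over $e_2$ gives $\sup_{e' \geq e_1} c_{e'} \leq c_{e_1} + \mu(\mathfrak{a})/p^{e_1}$, and consequently
\begin{equation*}
\limsup_{e \to \infty} c_e \;\leq\; c_{e_1} + \frac{\mu(\mathfrak{a})}{p^{e_1}}
\end{equation*}
for every $e_1 \in \mathbb{N}$. Now I would let $e_1$ run through a subsequence along which $c_{e_1} \to \liminf_{e} c_e$; the error term $\mu(\mathfrak{a})/p^{e_1}$ vanishes in the limit, so $\limsup_{e} c_e \leq \liminf_{e} c_e$. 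The reverse inequality is automatic, so the two coincide and the limit $\lim_{e \to \infty} c_e$ exists.

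The only real ingredient is the boundedness step, which reduces to the well-known pigeonhole comparison between ordinary and Frobenius powers of an ideal; once this and Lemma \ref{lem4} are in place, the existence of the limit is a formal Fekete-type deduction, so I do not anticipate a serious obstacle.
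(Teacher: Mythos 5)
Your proof is correct and follows the same route as the cited source \cite{DSNBP}: combine the estimate of Lemma~\ref{lem4} with boundedness of the sequence to obtain a Fekete-type conclusion that $\limsup$ and $\liminf$ coincide. As a minor remark, the separate pigeonhole boundedness step is not strictly needed once Lemma~\ref{lem4} is available, since applying it with $e_1=0$ already gives $c_e \leq c_0 + \mu(\mathfrak{a})$ for all $e$; the pigeonhole comparison is, however, exactly what guarantees that each individual $\nu^J_{\mathfrak{a}}(p^e)$ is finite (using $\mathfrak{a}\subseteq\sqrt{J}=\sqrt{J^{[p^e]}}$ in the Noetherian ring $R$), so it is not wasted.
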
   
The previous theorem gives existence to the $F$-thresholds and we may define them.
\begin{definition} [{\cite{DSNBP}}]
Let $R$ be a ring. Given $\mathfrak{a}, \; J$ ideals of $R$ such that $\mathfrak{a} \subseteq \sqrt{J}$, we define the $F$-threshold of $\mathfrak{a}$ with respect to $J$ by 
\begin{align*}
c^{J}(\mathfrak{a})&=\lim\limits_{e \rightarrow \infty} {\frac{\nu^{J}_{\mathfrak{a}}(p^{e})}{p^{e}}}.
\end{align*}
\end{definition}
\begin{proposition} [{\cite[Proposition $2.7$]{MTWFR} $\&$ \cite[Proposition $2.2$]{HMTWF-thresholdsgeneralcase}}]\label{cpFthreshold} 
Let $R$ be a ring, and let $\mathfrak{a},\;I,\;J$ be ideals in $R$. Then, the following hold.
\begin{itemize}
\item[(1)]If $J \subseteq I$, and $\mathfrak{a} \subseteq \sqrt{J}$, then $c^{I}(\mathfrak{a})\leq c^{J}(\mathfrak{a})$.
\item[(2)] If $\mathfrak{a} \subseteq \sqrt{J}$, then $c^{J^{[p]}}(\mathfrak{a})=p \cdot c^{J}(\mathfrak{a})$. 
\end{itemize}
\end{proposition}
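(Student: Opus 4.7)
The plan is to prove both parts directly from the definitions of $\nu^{J}_{\mathfrak{a}}(p^{e})$, observing two elementary facts about Frobenius powers: they preserve containment, and they satisfy $(J^{[p]})^{[p^{e}]} = J^{[p^{e+1}]}$. Since the existence of the limit defining $c^{J}(\mathfrak{a})$ is already guaranteed by the previously cited theorem, no convergence argument will be needed beyond routine manipulation.

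For part (1), I would first note that $\mathfrak{a} \subseteq \sqrt{J} \subseteq \sqrt{I}$, so both $F$-thresholds are defined. Since $J \subseteq I$ implies $J^{[p^{e}]} \subseteq I^{[p^{e}]}$, any integer $m$ satisfying $\mathfrak{a}^{m} \not\subseteq I^{[p^{e}]}$ automatically satisfies $\mathfrak{a}^{m} \not\subseteq J^{[p^{e}]}$. Taking the maximum over such $m$ on each side yields $\nu^{I}_{\mathfrak{a}}(p^{e}) \leq \nu^{J}_{\mathfrak{a}}(p^{e})$ for every $e$. Dividing by $p^{e}$ and letting $e \to \infty$ gives $c^{I}(\mathfrak{a}) \leq c^{J}(\mathfrak{a})$.

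For part (2), the key identity is $(J^{[p]})^{[p^{e}]} = J^{[p^{e+1}]}$, which follows from the additivity of exponents in the Frobenius bracket power. Plugging this into the definition, I obtain
\begin{align*}
\nu^{J^{[p]}}_{\mathfrak{a}}(p^{e}) \;=\; \max\{m \in \mathbb{N} \mid \mathfrak{a}^{m} \not\subseteq J^{[p^{e+1}]}\} \;=\; \nu^{J}_{\mathfrak{a}}(p^{e+1}).
\end{align*}
Dividing both sides by $p^{e}$ gives
\begin{align*}
\frac{\nu^{J^{[p]}}_{\mathfrak{a}}(p^{e})}{p^{e}} \;=\; p \cdot \frac{\nu^{J}_{\mathfrak{a}}(p^{e+1})}{p^{e+1}},
\end{align*}
and letting $e \to \infty$ on both sides yields $c^{J^{[p]}}(\mathfrak{a}) = p \cdot c^{J}(\mathfrak{a})$.

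There is no real obstacle here: both statements are essentially definitional once one records how Frobenius powers interact with containment and composition. The only care required is in checking that the hypothesis $\mathfrak{a} \subseteq \sqrt{J}$ transfers correctly (to $\sqrt{I}$ in (1) via $\sqrt{J} \subseteq \sqrt{I}$, and to $\sqrt{J^{[p]}} = \sqrt{J}$ in (2)), so that each $F$-threshold under consideration is indeed well-defined.
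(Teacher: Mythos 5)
Your proof is correct and uses the same elementary, definition-level argument one finds in the cited sources (Musta{\c{t}}{\u{a}}--Takagi--Watanabe and Huneke--Musta{\c{t}}{\u{a}}--Takagi--Watanabe); note that the paper itself states this proposition as a citation and does not reprove it. Both reductions are sound: containment $J\subseteq I$ passes to bracket powers giving the inequality of $\nu$'s in (1), and the identity $(J^{[p]})^{[p^{e}]}=J^{[p^{e+1}]}$ reindexes the $\nu$'s in (2), with the observation $\sqrt{J^{[p]}}=\sqrt{J}$ ensuring well-definedness.
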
 

%----------------------------------------------------------------------

\subsection{$F$-Thresholds in Stanley-Reisner Rings}

In this subsection, we focus on Stanley-Reisner rings. We denote $S=K[x_1, \ldots ,x_n]$ with $K$ an $F$-finite field of prime characteristic $p$. Let $I$ be a squarefree monomial ideal of $S$, and $R=S/I$.

Suppose that $\fa \subseteq R$ is an ideal. We abuse notation and denote the inverse image of $\fa \subseteq R$ under the natural projection $S \longrightarrow S/I$ by $\fa \subseteq S$.

The following proposition is one of the main results of this paper, Theorem \ref{MainThmB}. Using the fact that the quotient of $R$ with each of its minimal prime ideals is a regular ring, we obtain a case where the $F$-threshold is a rational number. 

\begin{theorem} \label{pro5}  
Let $\mathfrak{a},\;J$ be ideals of $R$, with $\mathfrak{a} \subseteq \sqrt{J}$, and $J$ monomial. Let $\mathfrak{p}_1,\; \ldots,\;\mathfrak{p}_l$ be the minimal prime ideals of $R$. Then, 
\begin{align*}
c^{J}_{R}(\mathfrak{a})&= \max \left\{c^{JS/\fp_i}_{S/{\mathfrak{p}_i}}(\mathfrak{a}S/\fp_i)\right\}. 
\end{align*} 
In particular, $c^{J}_{R}(\mathfrak{a}) \in \mathbb{Q}$. 
\end{theorem}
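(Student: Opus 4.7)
The plan is to reduce the computation of $c^J_R(\mathfrak{a})$ to the computation in the regular quotients $S/\mathfrak{p}_i$, where rationality is already known by the theorem of Blickle--Musta\c{t}\u{a}--Smith. The key combinatorial input is the distributive law for monomial ideals.

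First, I would establish that for all $e \geq 0$,
\begin{equation*}
J^{[p^e]}+I \;=\; \bigcap_{i=1}^{l}\bigl(J^{[p^e]}+\mathfrak{p}_i\bigr).
\end{equation*}
Since $J$, $I$, and each $\mathfrak{p}_i$ are monomial ideals of $S$, so are $J^{[p^e]}$, $J^{[p^e]}+I$, and $J^{[p^e]}+\mathfrak{p}_i$; therefore both sides are monomial ideals, and it suffices to compare them monomial by monomial. For any monomials $m$ and any monomial ideals $A,B$, one has $m \in A+B$ iff $m\in A$ or $m\in B$. Thus a monomial $m$ lies in $\bigcap_i(J^{[p^e]}+\mathfrak{p}_i)$ iff for each $i$ either $m\in J^{[p^e]}$ or $m\in\mathfrak{p}_i$; if $m\notin J^{[p^e]}$, this forces $m\in\bigcap_i\mathfrak{p}_i=I$, so $m\in J^{[p^e]}+I$. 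The reverse inclusion is trivial.

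Second, I would translate this into the statement about $\nu$-functions. A containment $\mathfrak{a}^m\subseteq J^{[p^e]}$ in $R$ is equivalent to $\mathfrak{a}^m\subseteq J^{[p^e]}+I$ in $S$, which by the distributive law above is equivalent to $\mathfrak{a}^m\subseteq J^{[p^e]}+\mathfrak{p}_i$ in $S$ for every $i$, i.e.\ to $(\mathfrak{a}S/\mathfrak{p}_i)^m\subseteq (JS/\mathfrak{p}_i)^{[p^e]}$ for every $i$. Negating gives
\begin{equation*}
\nu^{J}_{\mathfrak{a}}(p^e) \;=\; \max_{1\le i\le l}\; \nu^{JS/\mathfrak{p}_i}_{\mathfrak{a}S/\mathfrak{p}_i}(p^e).
\end{equation*}
Here one also checks that $\mathfrak{a}S/\mathfrak{p}_i\subseteq\sqrt{JS/\mathfrak{p}_i}$, which is immediate from $\mathfrak{a}\subseteq\sqrt{J}$ in $R$.

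Third, I would divide by $p^e$ and pass to the limit. Each of the finitely many sequences $\nu^{JS/\mathfrak{p}_i}_{\mathfrak{a}S/\mathfrak{p}_i}(p^e)/p^e$ converges by the existence theorem of De Stefani--N\'u\~nez-Betancourt--P\'erez, and the maximum of finitely many convergent sequences converges to the maximum of the limits. This gives
\begin{equation*}
c^{J}_{R}(\mathfrak{a}) \;=\; \max_{1\le i\le l}\; c^{JS/\mathfrak{p}_i}_{S/\mathfrak{p}_i}\!\bigl(\mathfrak{a}S/\mathfrak{p}_i\bigr).
\end{equation*}
Finally, each $\mathfrak{p}_i$ is generated by a subset of the variables, so $S/\mathfrak{p}_i$ is a polynomial ring over $K$ and hence an $F$-finite regular ring. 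By Blickle--Musta\c{t}\u{a}--Smith \cite{BMMMSK}, each $c^{JS/\mathfrak{p}_i}_{S/\mathfrak{p}_i}(\mathfrak{a}S/\mathfrak{p}_i)$ is rational, so the maximum is rational. I do not expect a real obstacle: everything is elementary once the monomial distributive law is in hand, which is the only place where the hypothesis that $J$ is monomial is used in an essential way.
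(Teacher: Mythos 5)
Your proof is correct and follows essentially the same strategy as the paper: pass to $S$ and use the monomial distributive identity $J^{[p^e]}+I=\bigcap_i\bigl(J^{[p^e]}+\mathfrak{p}_i\bigr)$ to translate the $\nu$-functions, then invoke Blickle--Musta{\c{t}}{\u{a}}--Smith for rationality in the regular quotients $S/\mathfrak{p}_i$. Your version is slightly cleaner in that you establish the full ideal equality up front (the paper proves only the one inclusion needed for the upper bound and handles the lower bound by a separate direct argument), but the key lemma and overall structure are the same.
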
 
\begin{proof}
We know that $I=\bigcap_{i=1}^{l}\mathfrak{p}_i$. Moreover, each $\mathfrak{p}_i$ is generated by variables. We claim that $c^{J}_{R}(\mathfrak{a})\geq \max \left\{c^{JS/\fp_i}_{S/{\mathfrak{p}_i}}(\mathfrak{a}S/\fp_i)\right\}$. Let $e$ be a nonnegative integer. We take $t_i=\nu_{\fa S/{\mathfrak{p}_i}}^{JS/\fp_i}(p^{e})$. Then, $\mathfrak{a}^{t_i}S/{\mathfrak{p}_i} \not \subseteq J^{[p^{e}]}S/{\mathfrak{p}_i}$. Hence, there exists $r \in \mathfrak{a}^{t_i}$ such that $r-c \not \in \mathfrak{p}_i$ for every $c \in J^{[p^{e}]}$. Thus, $r-c \not \in I$, and so $r \not \in  J^{[p^{e}]}$. As a consequence, $\mathfrak{a}^{t_i} \not \subseteq J^{[p^{e}]}$.

We have that $t_i \leq \nu_{\fa}^{J}(p^{e})$ for all $i$. Then, $\frac{\nu_{\fa S/{\mathfrak{p}_i}}^{JS/\fp_i}(p^{e})}{p^{e}} \leq \frac{\nu_{\fa}^{J}(p^{e})}{p^{e}}$. Thus, $c^{JS/\fp_i}_{S/{\mathfrak{p}_i}}(\mathfrak{a}S/\fp_i) \leq c^{J}_{R}(\mathfrak{a})$. Therefore,  $c^{J}_{R}(\mathfrak{a})\geq \max \left\{c^{JS/\fp_i}_{S/{\mathfrak{p}_i}}(\mathfrak{a}S/\fp_i)\right\}$.

We now show that $\bigcap_{i=1}^{l}(J^{[p^{e}]}+\mathfrak{p}_i) \subseteq J^{[p^{e}]}+I$. We proceed by contradiction. Let $s$ be a generator of $\bigcap_{i=1}^{l}(J^{[p^{e}]}+\mathfrak{p}_i)$ such that $s \not \in J^{[p^{e}]}+I$. Since $ J^{[p^{e}]}$ and each $\mathfrak{p}_i$ are monomial ideals, we have that every $J^{[p^{e}]}+\mathfrak{p}_i$ is a monomial ideal too. Hence, $\bigcap_{i=1}^{l}(J^{[p^{e}]}+\mathfrak{p}_i)$ is a monomial ideal. We can take $s$ as a monomial. Furthermore, $s \not \in J^{[p^{e}]}$ and $s \not \in I$. Thus, there exists an $i$ such that $s \not \in \mathfrak{p}_i$. However, $s \in J^{[p^{e}]}+\mathfrak{p}_i$. Since $s$ is a monomial and $\mathfrak{p}_i$ is generated by variables, we conclude that $s \in J^{[p^{e}]}$, we get a contradiction. Thus, $s \in J^{[p^{e}]}+I$.  

We prove that $c^{J}_{R}(\mathfrak{a})\leq \max \left\{c^{JS/\fp_i}_{S/{\mathfrak{p}_i}}(\mathfrak{a}S/\fp_i)\right\}$.  Let $e$ be a nonnegative integer. We take $t=\nu_{\fa}^{J}(p^{e})$. Then, $\mathfrak{a}^t \not \subseteq J^{[p^{e}]}$. Hence, there exists $r \in \mathfrak{a}^t$ such that $r-c \not \in I$ for every $c \in J^{[p^{e}]}$. As a consequence, $r \not \in  J^{[p^{e}]} + I$, and so $r \not \in \bigcap_{i=1}^{l}(J^{[p^{e}]}+\mathfrak{p}_i)$. Hence, $r \not \in J^{[p^{e}]} + \mathfrak{p}_i$ for some $i$. It follows that $\mathfrak{a}^tS/\mathfrak{p}_i \not \subseteq J^{[p^{e}]}S/\mathfrak{p}_i$.  

Consequently, we have $t \leq \nu_{\fa S/{\mathfrak{p}_i}}^{JS/\fp_i}(p^{e})$ for some $i$. Then, $\frac{\nu_{\fa}^{J}(p^{e})}{p^{e}} \leq \max \left\{ \frac{\nu_{\fa S/{\mathfrak{p}_i}}^{JS/\fp_i}(p^{e})}{p^{e}}\right\}$. Therefore,  $c^{J}_{R}(\mathfrak{a})\leq \max \left\{c^{JS/\fp_i}_{S/{\mathfrak{p}_i}}(\mathfrak{a}S/\fp_i)\right\}$.
\end{proof}

\begin{remark}\label{remarkseries}
Given $\widetilde{S}=K[[x_1,  \ldots ,x_n]]$ with $K$ an $F$-finite field of prime characteristic $p$. We take $\widetilde{I}$ as a squarefree monomial ideal of $\widetilde{S}$, and $\widetilde{R}=\widetilde{S}/\widetilde{I}$, same as in Theorem \ref{pro5}. Let $\widetilde{\mathfrak{a}},\;\widetilde{J}$ be two ideals of $\widetilde{R}$, with $\widetilde{\mathfrak{a}} \subseteq \sqrt{\widetilde{J}}$, and $\widetilde{J}$ monomial. Then, $c^{\widetilde{J}}_{\widetilde{R}}(\widetilde{\mathfrak{a}}) \in \mathbb{Q}$.  
\end{remark}

%%%%%%%%%%%%%%%%%%%%%%%%%%%%%%%%%%%%%%%%%%%%%%%%%%%%%%%%%%%%%%%%%%%%%%%%%%
\section{The Ideal $J_e$}
%%%%%%%%%%%%%%%%%%%%%%%%%%%%%%%%%%%%%%%%%%%%%%%%%%%%%%%%%%%%%%%%%%%%%%%%%%

In this section we introduce the Cartier core of an ideal. This is related to the Cartier operators. We also investigate the behavior of these ideals for Stanley-Reisner rings. 

%--------------------------------------------------------------------------
\subsection{Cartier Contraction.}
We begin this subsection introducing an ideal that allows the study of homomorphisms that do not give splittings. 

\begin{definition}[{\cite{AEFpure}}]
 Let $(R,\mathfrak{m},K)$ be a local ring or a standard graded $K$-algebra, which is $F$-finite and $F$-pure. We define 
 \begin{align*}
 I_e(R)&=\{f \in R\;|\; \varphi(f^{1/p^e}) \in \mathfrak{m},\; \mathrm{for\; all}\; \varphi \in \Hom_R(R^{1/p^e},R)\},
\end{align*}
where $e\in \mathbb{N}$.  
\end{definition}
\begin{remark}
The set $I_e(R)$ is an ideal of $R$, and is called the $e$-th splitting ideal of $R$. Then, $f \not \in I_e(R)$ if and only if $\varphi(f^{1/p^e})=1$ for some map $ \varphi \in \Hom_R(R^{1/p^e},R)$. 
\end{remark}

The ideal $I_e(R)$ is used to define the $F$-signature \cite{YAO}. Smith and Van den Bergh in their work \cite{KSVB} showed existence of this invariant when the ring $R$ is strongly $F$-regular and has finite Frobenius representation type. Later, Huneke and Leuschke \cite{CHLG} showed that this invariant exists if $R$ is a complete local Gorenstein domain. 
%They also give its name, and denote it by $s(R)$ whenever it exists. 
For rings that are Gorenstein on the punctured spectrum, its existence was given by Yao \cite{YAO}. Subsequently, Tucker \cite{TKFSIG} showed existence of the $F$-signature in $R$ with full generality.

\begin{definition}[{\cite{DSHNBW}}] \label{defJ_e}
 Let $R$ be an $F$-finite $F$-pure ring, and $J$ be an ideal in $R$. We define the Cartier contraction as 
 \begin{align*}
 J_e&=\{f \in R\;|\; \varphi(f^{1/p^e}) \in J,\; \mathrm{for\; all}\; \varphi \in \Hom_R(R^{1/p^e},R)\},
\end{align*}
for $e\in \mathbb{N}$.  
\end{definition}

\begin{remark}
The set $J_e$ is an ideal of $R$. If $(R,\mathfrak{m},K)$ is a local ring or a standard graded $K$-algebra, and $\mathfrak{m}=J$, we have $I_e(R)=J_e$. 
\end{remark}

\begin{proposition}\label{containJ_e}
Let $R$ be an $F$-finite $F$-pure ring, and $J$ be an ideal of $R$. Then, for every $e$ nonnegative integer $J^{[p^e]} \subseteq J_e \subseteq J$.
\end{proposition}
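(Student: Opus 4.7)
The plan is to prove the two containments $J^{[p^e]} \subseteq J_e$ and $J_e \subseteq J$ separately, each by a direct argument.

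For the first containment, I would start with $f \in J^{[p^e]}$ and write $f = \sum_i j_i^{p^e} r_i$ with $j_i \in J$ and $r_i \in R$. The crucial observation is that in characteristic $p$ the Frobenius map is additive, so
\begin{equation*}
\left( \sum_i j_i\, r_i^{1/p^e} \right)^{p^e} = \sum_i j_i^{p^e} r_i = f,
\end{equation*}
and therefore $f^{1/p^e} = \sum_i j_i\, r_i^{1/p^e}$ in $R^{1/p^e}$. For any $\varphi \in \Hom_R(R^{1/p^e},R)$, $R$-linearity then gives
\begin{equation*}
\varphi(f^{1/p^e}) = \sum_i j_i\, \varphi(r_i^{1/p^e}) \in J,
\end{equation*}
since each $j_i \in J$. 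Hence $f \in J_e$.

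For the second containment, the plan is to use $F$-purity to exhibit, for every $f \in J_e$, a \emph{specific} map $\varphi \in \Hom_R(R^{1/p^e},R)$ with $\varphi(f^{1/p^e}) = f$; then $f \in J$ follows directly from the definition of $J_e$. Since $R$ is $F$-finite and $F$-pure, the inclusion $R \hookrightarrow R^{1/p^e}$ splits, so there exists $\psi \in \Hom_R(R^{1/p^e},R)$ with $\psi(r) = r$ for every $r \in R$. Given $f \in J_e$, define
\begin{equation*}
\varphi(g) = \psi\!\left( f^{(p^e - 1)/p^e}\, g \right), \qquad g \in R^{1/p^e}.
\end{equation*}
This is $R$-linear because it is the composition of multiplication by the element $f^{(p^e-1)/p^e} \in R^{1/p^e}$ (which is $R^{1/p^e}$-linear, hence $R$-linear) with the $R$-linear map $\psi$. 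Evaluating at $f^{1/p^e}$ gives
\begin{equation*}
\varphi(f^{1/p^e}) = \psi\!\left( f^{(p^e-1)/p^e}\cdot f^{1/p^e} \right) = \psi(f) = f.
\end{equation*}
Since $f \in J_e$, we conclude $f = \varphi(f^{1/p^e}) \in J$.

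The only delicate point is the second containment, where one must produce a map $\varphi$ that recovers $f$ from $f^{1/p^e}$; this is the standard twist-by-$f^{(p^e-1)/p^e}$ trick from Frobenius splitting theory, and it relies essentially on the $F$-purity hypothesis to get a splitting $\psi$ to compose with. The first containment is purely formal, using only that Frobenius is a ring homomorphism in characteristic $p$.
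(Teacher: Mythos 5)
Your proof is correct, and both containments are established by sound arguments. For the first containment you check directly that an arbitrary element $f = \sum_i j_i^{p^e} r_i$ of $J^{[p^e]}$ lies in $J_e$; the paper checks only the generators $x^{p^e}$ for $x \in J$ and then implicitly uses that $J_e$ is an ideal. These amount to the same thing.

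For the second containment you and the paper use the same key input (the $F$-purity hypothesis, giving a splitting of $R \hookrightarrow R^{1/p^e}$) but package it differently. The paper argues by contrapositive: given $r \notin J$, it applies the splitting $\beta$ to $(r^{p^e})^{1/p^e} = r$, concludes $r^{p^e} \notin J_e$, and then descends to $r \notin J_e$ by the ideal property of $J_e$. You instead construct, for each $f \in J_e$, the twisted map $\varphi = \psi \circ (\text{multiplication by } f^{(p^e-1)/p^e})$ and observe directly that $\varphi(f^{1/p^e}) = f$, so $f \in J$. Your direct version avoids the detour through $r^{p^e}$ and the appeal to $J_e$ being an ideal, at the small cost of introducing the twist; the paper's contrapositive is marginally shorter but a touch less transparent. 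Both are standard in Frobenius splitting arguments and both are fine here.
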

\begin{proof}
First, we show the inclusion $J^{[p^e]} \subseteq J_e$. Let $x$ be an element of $J$. For every $ \varphi \in \Hom_R(R^{1/p^e},R)$, $\varphi((x^{p^e})^{1/p^e})=\varphi(x \cdot 1)=x\varphi(1)\in J$. Therefore, $x^{p^e} \in J_e$.

To show the other inclusion, we proceed by contrapositive. Let $r \not \in J$. Since $R \subseteq R^{1/{p^e}}$ is an $R$-module split, we can take $\beta \in \Hom_R(R^{1/p^e},R)$ such that $\beta|_R=1_R$. It follows that $\beta((r^{p^e})^{1/{p^e}})=\beta(r)=r \not \in J$. Hence, $r^{p^e} \not \in J_e$, and so, $r \not \in J_e$.
\end{proof}

The equality $J_e=J$ holds under certain conditions. This is done in Proposition \ref{pro2} below. 

The following proposition shows that the construction of the ideals $J_e$ commutes with arbitrary intersections.

\begin{proposition} \label{pro6} 
Let $R$ be an $F$-finite $F$-pure ring, and $\{J_i\}_i$ be a family of ideals in $R$. Then, $\left(\bigcap_iJ_i\right)_e=\bigcap_i(J_i)_e$ for every $e$ nonnegative integer.
\end{proposition}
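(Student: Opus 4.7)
The plan is to prove the equality by unwinding the definition of Cartier contraction and observing that the two universal quantifiers involved commute. Fix a nonnegative integer $e$ and pick $f \in R$.

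First I would translate membership in the left-hand side directly from Definition \ref{defJ_e}: $f \in \left(\bigcap_i J_i\right)_e$ if and only if $\varphi(f^{1/p^e}) \in \bigcap_i J_i$ for every $\varphi \in \Hom_R(R^{1/p^e}, R)$. Next, I would use the defining property of intersections to rewrite this as: for every $\varphi \in \Hom_R(R^{1/p^e}, R)$ and for every index $i$, $\varphi(f^{1/p^e}) \in J_i$.

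Because both quantifiers are universal, they may be swapped freely, giving: for every $i$ and every $\varphi$, $\varphi(f^{1/p^e}) \in J_i$. Applying Definition \ref{defJ_e} again, for each fixed $i$ this says precisely that $f \in (J_i)_e$. Since this holds for all $i$, we conclude $f \in \bigcap_i (J_i)_e$, and the converse implications run identically in reverse. Both inclusions are thus obtained simultaneously.

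There is no real obstacle here; the statement is purely formal and follows from commuting two universal quantifiers. The only thing to keep in mind is that the family $\{J_i\}_i$ is allowed to be arbitrary (possibly infinite), but since the argument never requires manipulating finitely many generators or passing to a sum, no finiteness hypothesis is needed.
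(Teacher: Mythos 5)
Your proof is correct and follows the same route as the paper: unwind Definition \ref{defJ_e}, rewrite membership in the intersection, and commute the two universal quantifiers (over $\varphi$ and over $i$). If anything, you are slightly more explicit than the paper about where the quantifier over $\varphi$ sits, which is a small improvement in clarity.
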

\begin{proof}
For every $\varphi \in \Hom_R(R^{1/p^e},R)$, we have that
\begin{align*}
x \in\left(\bigcap_iJ_i\right)_e& \Leftrightarrow \varphi(x^{1/p^e}) \in \bigcap_iJ_i\\
&\Leftrightarrow \varphi(x^{1/p^e}) \in J_i \; \mathrm{for \; every} \; i  \\
&\Leftrightarrow x \in (J_i)_e \; \mathrm{for \; every} \; i\\
&\Leftrightarrow x \in\bigcap_i(J_i)_e. 
\end{align*}
\end{proof}

\begin{proposition} \label{primary}
Let $R$ be an $F$-finite $F$-pure ring, and $\fq$ be a prime ideal of $R$. Then $\fq_e$ is a $\fq$-primary ideal for every $e \in \mathbb{N}$. 
\end{proposition}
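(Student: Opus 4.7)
The plan is to split the proof into the (easy) radical computation and the (one-line) check that $\fq_e$ is primary. For the radical, I would simply apply Proposition \ref{containJ_e}, which gives $\fq^{[p^e]} \subseteq \fq_e \subseteq \fq$; passing to radicals then forces $\sqrt{\fq_e}=\fq$. So what remains is to verify that whenever $ry \in \fq_e$ with $y \notin \fq$, one has $r \in \fq_e$.

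The heart of the argument will be a twist-by-$y^{(p^e-1)/p^e}$ trick that converts a test at $(ry)^{1/p^e}$ into a test at $r^{1/p^e}$ with $y$ pulled outside. Concretely, fix an arbitrary $\varphi_0 \in \Hom_R(R^{1/p^e},R)$ and define $\psi \colon R^{1/p^e} \to R$ by
\begin{equation*}
\psi(w) \;=\; \varphi_0\!\left(y^{(p^e-1)/p^e}\, w\right),
\end{equation*}
where $y^{(p^e-1)/p^e}=(y^{p^e-1})^{1/p^e} \in R^{1/p^e}$. Multiplication by $y^{(p^e-1)/p^e}$ is $R$-linear on $R^{1/p^e}$, hence $\psi$ is $R$-linear, i.e.\ $\psi \in \Hom_R(R^{1/p^e},R)$.

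The key computation is then $y^{(p^e-1)/p^e}\cdot y^{1/p^e}=y$ inside $R^{1/p^e}$, which yields
\begin{equation*}
\psi\!\left((ry)^{1/p^e}\right) \;=\; \varphi_0\!\left(y^{(p^e-1)/p^e} r^{1/p^e}\, y^{1/p^e}\right) \;=\; \varphi_0\!\left(y\cdot r^{1/p^e}\right) \;=\; y\,\varphi_0(r^{1/p^e}),
\end{equation*}
using that $y \in R$ can be pulled outside the $R$-linear map $\varphi_0$. Since $ry \in \fq_e$, the left side lies in $\fq$; as $\fq$ is prime and $y \notin \fq$, we conclude $\varphi_0(r^{1/p^e}) \in \fq$. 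Because $\varphi_0$ was arbitrary, $r \in \fq_e$, which is exactly the primary condition.

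The only place where anything could go wrong is the initial claim that $y^{(p^e-1)/p^e}$ defines an $R$-linear endomorphism of $R^{1/p^e}$, but this is immediate from commutativity of $R^{1/p^e}$. In particular, no use of $F$-purity or $F$-finiteness is needed for primariness itself beyond what is already built into the ambient definition of $\fq_e$, so I expect the whole proof to be short and essentially obstruction-free.
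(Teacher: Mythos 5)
Your proof is correct and uses the same core trick as the paper: multiplying by a suitable power of the second element transfers the Cartier test at $(ry)^{1/p^e}$ to one at $r^{1/p^e}$ with $y$ factored out, and then primeness of $\fq$ finishes. The paper runs the argument contrapositively (fix a witness $\varphi$ with $\varphi(a^{1/p^e})\notin\fq$, conclude $b^{p^e}a\notin\fq_e$, then use that $\fq_e$ is an ideal to deduce $ab\notin\fq_e$), whereas you run it directly by precomposing an arbitrary $\varphi_0$ with multiplication by $y^{(p^e-1)/p^e}$; the two are essentially dual presentations of the same idea, and both are equally short.
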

\begin{proof}
We show that $\sqrt{\fq_e}=\fq$. By Proposition \ref{containJ_e}, $\fq^{[p^e]} \subseteq \fq_e \subseteq \fq$, and so, $$\fq= \sqrt{\fq}=\sqrt{\fq^{[p^e]}}\subseteq \sqrt{\fq_e} \subseteq \sqrt{\fq}=\fq.$$

We now show that $\fq_e$ is primary. Suppose that there exist $a,b \in R$ such that $a \not \in \fq_e$ and $b \not \in \fq$. There is $\varphi \in \Hom_R(R^{1/p^e},R)$ satisfying $\varphi(a^{1/p^e}) \not \in \fq$. As $\fq$ is a prime ideal, $\varphi((b^{p^e}a)^{1/p^e})=\varphi(ba^{1/p^e})=b\varphi(a^{1/p^e}) \not \in \fq$. Hence, $b^{p^e}a \not \in \fq_e$, and so, $ab \not \in \fq_e$. Therefore, $\fq_e$ is a $\fq$-primary ideal of R.  
\end{proof}

We now recall the definition of uniformly compatible. Our goal is to study the biggest uniformly compatible ideal contained in other given ideal.

\begin{definition}[{\cite{KSCenterofF-purity}}]\label{defF-compatible}
Let $R$ be an $F$-finite ring, and $J$ be an ideal of $R$. We say that $J$ is uniformly $F$-compatible if $\varphi(J^{1/p^{e}})\subseteq J$ for every $e>0$ and every $\varphi \in \Hom_R(R^{1/p^e},R)$.
\end{definition}

\begin{proposition} \label{pro2} 
Let $R$ be an $F$-finite $F$-pure ring. Let $J$ be an ideal of $R$. Then, $J_e = J$ for every $e$ nonnegative integer if and only if $J$ is uniformly $F$-compatible.  
\end{proposition}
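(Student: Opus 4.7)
The statement is essentially a direct unwinding of the two definitions, so my plan is to argue each implication straight from definitions, using Proposition \ref{containJ_e} for the easy inclusion in the reverse direction.

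For the forward direction, I will assume $J_e = J$ for every nonnegative integer $e$ and check the definition of uniform $F$-compatibility. Pick $x \in J$ and any $\varphi \in \Hom_R(R^{1/p^e}, R)$. Since $x \in J = J_e$, Definition \ref{defJ_e} yields $\varphi(x^{1/p^e}) \in J$. Because elements of $J^{1/p^e}$ are $R^{1/p^e}$-linear combinations of elements of the form $x^{1/p^e}$ with $x \in J$, and any $\varphi \in \Hom_R(R^{1/p^e}, R)$ is $R$-linear (and thus, after absorbing the $R^{1/p^e}$-coefficients into new homomorphisms via the $R^{1/p^e}$-module structure on $\Hom_R(R^{1/p^e}, R)$, the image of every element of $J^{1/p^e}$ lies in $J$), we conclude $\varphi(J^{1/p^e}) \subseteq J$. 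Hence $J$ is uniformly $F$-compatible.

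For the reverse direction, assume $J$ is uniformly $F$-compatible. Proposition \ref{containJ_e} already gives $J_e \subseteq J$ for every $e$, so only $J \subseteq J_e$ requires argument. Let $x \in J$ and let $\varphi \in \Hom_R(R^{1/p^e}, R)$ be arbitrary. Then $x^{1/p^e} \in J^{1/p^e}$, and by uniform $F$-compatibility $\varphi(x^{1/p^e}) \in \varphi(J^{1/p^e}) \subseteq J$. Since this holds for every $\varphi$, Definition \ref{defJ_e} yields $x \in J_e$, giving $J \subseteq J_e$ and therefore $J_e = J$.

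There is no serious obstacle: the only subtle point is making sure in the forward direction that membership $\varphi(x^{1/p^e}) \in J$ for all $x \in J$ and all $\varphi$ actually captures the full condition $\varphi(J^{1/p^e}) \subseteq J$. This is handled by noting that the set $\{x^{1/p^e} : x \in J\}$ generates $J^{1/p^e}$ as an $R^{1/p^e}$-module, combined with the fact that $\Hom_R(R^{1/p^e}, R)$ carries an $R^{1/p^e}$-module structure via premultiplication, so that moving the $R^{1/p^e}$-coefficient inside amounts to replacing $\varphi$ by another homomorphism in $\Hom_R(R^{1/p^e}, R)$. With this observation in place, both directions are immediate.
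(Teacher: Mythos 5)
Your proof is correct and follows essentially the same definition-unwinding approach as the paper. One observation that simplifies your forward direction: because the $p^e$-th root map $R \to R^{1/p^e}$ is a ring isomorphism in characteristic $p$, the set $J^{1/p^e}$ is literally $\{x^{1/p^e} : x \in J\}$ --- not merely generated by those elements as an $R^{1/p^e}$-module --- so there is no need to invoke the $R^{1/p^e}$-module structure on $\Hom_R(R^{1/p^e}, R)$ to absorb coefficients; the paper's one-line forward implication relies on this identification implicitly, while your more cautious argument reaches the same conclusion by a slightly longer but perfectly valid route.
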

\begin{proof}
We suppose that  $J_e = J$ for every $e \geq 0$. We have that $\varphi(J^{1/p^e}) \subseteq J$ for every $ \varphi \in \Hom_R(R^{1/p^e},R)$ by Definition \ref{defJ_e}.

For the other direction, it is enough to see that $J \subseteq J_e$ for every $e > 0$. In fact, by Definition \ref{defF-compatible}, $\varphi(J^{1/p^e}) \subseteq J$ for all $ \varphi \in \Hom_R(R^{1/p^e},R)$. Therefore, $J \subseteq J_e$.           
\end{proof}

\begin{lemma} \label{lema6}
Let $R$ be an $F$-finite $F$-pure ring. Let $J$ be an ideal of $R$. Then,
$\bigcap_{s \in \mathbb{N}} J_s$ is uniformly $F$-compatible.   
\end{lemma}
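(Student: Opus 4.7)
By Proposition \ref{pro2}, it suffices to show that for every $e > 0$ and every $\varphi \in \Hom_R(R^{1/p^e}, R)$, we have $\varphi(K^{1/p^e}) \subseteq K$, where $K = \bigcap_{s \in \mathbb{N}} J_s$. So I fix $x \in K$ and $\varphi \in \Hom_R(R^{1/p^e}, R)$, and aim to show $\varphi(x^{1/p^e}) \in J_s$ for every $s \in \mathbb{N}$.

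Fix such an $s$ and any $\psi \in \Hom_R(R^{1/p^s}, R)$. The key observation is that $\varphi$ extends in a canonical way by taking $p^s$-th roots to an $R^{1/p^s}$-linear map $\varphi^{1/p^s} : R^{1/p^{e+s}} \longrightarrow R^{1/p^s}$, characterized by $\varphi^{1/p^s}(y^{1/p^{e+s}}) = \varphi(y^{1/p^e})^{1/p^s}$ for $y \in R$. Composing with $\psi$ yields an element $\psi \circ \varphi^{1/p^s} \in \Hom_R(R^{1/p^{e+s}}, R)$. Since $x \in K \subseteq J_{e+s}$, the definition of the Cartier contraction forces
\[
\psi\bigl(\varphi(x^{1/p^e})^{1/p^s}\bigr) \;=\; \bigl(\psi \circ \varphi^{1/p^s}\bigr)(x^{1/p^{e+s}}) \;\in\; J.
\]
As $\psi$ was arbitrary in $\Hom_R(R^{1/p^s}, R)$, this shows $\varphi(x^{1/p^e}) \in J_s$; since $s$ was arbitrary, $\varphi(x^{1/p^e}) \in K$. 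Hence $K$ is uniformly $F$-compatible.

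The only genuine subtlety is the passage from $\varphi$ to its extension $\varphi^{1/p^s}$ and the identity $\psi \circ \varphi^{1/p^s} \in \Hom_R(R^{1/p^{e+s}}, R)$. This is standard in the Frobenius/Cartier formalism: one uses the isomorphism $\Hom_R(R^{1/p^e}, R) \otimes_R R^{1/p^s} \cong \Hom_{R^{1/p^s}}(R^{1/p^{e+s}}, R^{1/p^s})$ together with the fact that composing an $R^{1/p^s}$-linear map with an $R$-linear map from $R^{1/p^s}$ to $R$ gives an $R$-linear map. No serious obstacle is expected; the argument is essentially a bookkeeping exercise in Frobenius extensions, and the whole statement reduces to the obvious stability of $\bigcap_s J_s$ under the chain of Cartier operations since each $J_s$ tests membership against \emph{all} maps in $\Hom_R(R^{1/p^s}, R)$.
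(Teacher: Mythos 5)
Your proof is correct and follows essentially the same route as the paper's: you construct the $R$-linear map $\varphi^{1/p^s}\colon R^{1/p^{e+s}}\to R^{1/p^s}$ by extracting $p^s$-th roots of $\varphi$ (the paper calls this map $\psi$), compose it with an arbitrary element of $\Hom_R(R^{1/p^s},R)$ to land in $\Hom_R(R^{1/p^{e+s}},R)$, and invoke $x\in J_{e+s}$; the paper phrases the identical construction as a proof by contradiction rather than directly. One small slip: the opening reduction, that it suffices to show $\varphi(K^{1/p^e})\subseteq K$ for all $e$ and $\varphi$, is precisely Definition \ref{defF-compatible} of uniform $F$-compatibility, not Proposition \ref{pro2} (the latter is the equivalence with $J_e=J$ for all $e$), so the citation should point to the definition.
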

\begin{proof}
We proceed by contradiction. We suppose that $\varphi\left( \left( \bigcap_{s \in \mathbb{N}} J_s \right)^{1/p^e}\right) \not \subseteq \bigcap_{s \in \mathbb{N}} J_s$ for some $e>0$ and $\varphi \in \Hom_R(R^{1/p^e},R)$, and so, we have an $f \in \bigcap_{s \in \mathbb{N}} J_s$ such that  $\varphi(f^{1/p^e}) \not \in \bigcap_{s \in \mathbb{N}} J_s$. Thus, $\varphi(f^{1/p^e}) \not \in J_s$ for some $s \in \mathbb{N}$. Consequently, there exists $\phi \in \Hom_R(R^{1/p^s},R)$ such that $\phi(\varphi(f^{1/p^e})^{1/p^s}) \not \in J$.

If we take $\psi:R^{1/p^{e+s}} \longrightarrow R^{1/p^s}$ such that $\psi(r^{1/p^{e+s}})=\varphi(r^{1/p^e})^{1/p^s}$, we have that $\psi$ is $R$-linear. As a consequence, $\sigma=\phi \circ \psi \in \Hom_R(R^{1/p^{e+s}},R)$. Then, $$\sigma(f^{1/p^{e+s}})= \phi \circ \psi (f^{1/p^{e+s}})= \phi( \varphi(f^{1/p^e})^{1/p^s}) \not \in J.$$
Therefore, $f \not \in J_{e+s}$, and we reach a contradiction.      
\end{proof}

\begin{proposition}
Let $R$ be an $F$-finite $F$-pure ring. Let $J$ be an ideal of $R$. Then, $\bigcap_{s \in \mathbb{N}} J_s$ is the biggest uniformly $F$-compatible ideal contained in $J$.
\end{proposition}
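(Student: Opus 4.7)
The plan is to check the two defining properties separately: (i) $\bigcap_{s\in\mathbb{N}} J_s$ is itself uniformly $F$-compatible and is contained in $J$, and (ii) every uniformly $F$-compatible ideal sitting inside $J$ is contained in this intersection. Property (i) is almost free: uniform $F$-compatibility is exactly Lemma \ref{lema6}, while the containment in $J$ follows from Proposition \ref{containJ_e} (each $J_s$ lies inside $J$, hence so does the intersection).

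For the maximality statement (ii), I would fix an arbitrary uniformly $F$-compatible ideal $\mathfrak{b}$ with $\mathfrak{b}\subseteq J$ and aim to show $\mathfrak{b}\subseteq J_s$ for every nonnegative integer $s$. The key observation is a monotonicity property of the Cartier contraction that falls straight out of Definition \ref{defJ_e}: if $\mathfrak{b}\subseteq J$ and $f\in\mathfrak{b}_s$, then for every $\varphi\in\Hom_R(R^{1/p^s},R)$ we have $\varphi(f^{1/p^s})\in\mathfrak{b}\subseteq J$, so $f\in J_s$; hence $\mathfrak{b}_s\subseteq J_s$. Now apply Proposition \ref{pro2} to $\mathfrak{b}$: since $\mathfrak{b}$ is uniformly $F$-compatible, $\mathfrak{b}_s=\mathfrak{b}$ for every $s$. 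Combining these two facts yields $\mathfrak{b}=\mathfrak{b}_s\subseteq J_s$ for all $s$, so $\mathfrak{b}\subseteq \bigcap_{s\in\mathbb{N}} J_s$, which is the desired maximality.

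There is no real obstacle here — the proposition is essentially a packaging of Lemma \ref{lema6} and Proposition \ref{pro2} together with the trivial monotonicity of $(-)_e$ under inclusion of ideals. The only thing worth being careful about is verifying that monotonicity step directly from the definition (rather than appealing to any nontrivial property of Cartier operators), since it is precisely the bridge that lets the equality $\mathfrak{b}_s=\mathfrak{b}$ transfer into the ambient ideal $J$.
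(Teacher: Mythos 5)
Your proof is correct and follows essentially the same route as the paper: the maximality is established by taking a uniformly $F$-compatible $\mathfrak{b}\subseteq J$, invoking Proposition \ref{pro2} to get $\mathfrak{b}=\mathfrak{b}_s$, and using the monotonicity $\mathfrak{b}_s\subseteq J_s$ to conclude $\mathfrak{b}\subseteq\bigcap_s J_s$. The only difference is that you spell out the monotonicity of $(-)_s$ explicitly and restate that the intersection itself is uniformly $F$-compatible (Lemma \ref{lema6}) and contained in $J$ (Proposition \ref{containJ_e}), whereas the paper leaves these as implicit appeals to the surrounding results.
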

\begin{proof}
Let $I \subseteq J$ be an uniformly $F$-compatible ideal. By Proposition \ref{pro2}, $I=I_e \subseteq J_e$ for every $e \geq0$. Therefore, $I \subseteq \bigcap_{s \in \mathbb{N}} J_s$.
\end{proof}

Motivated by the splitting prime ideal \cite{AEFpure} and differential core \cite{BJLdifferentialoperators}, we introduce the Cartier core.
\begin{definition}\label{defcartiercore}
Let $R$ be an $F$-finite $F$-pure ring. Given $J$ an ideal of $R$, we define the Cartier core of $J$ as $\mathcal{P}(J)=\bigcap_{s \in \mathbb{N}} J_s$.     
\end{definition}

\begin{remark}
Let $(R,\mathfrak{m},K)$ be a local ring or a standard graded $K$-algebra, and $\mathfrak{m}=J$. Then, the ideal $\mathcal{P}(J)$ coincides with the splitting prime of $R$, denoted $\mathcal{P}(R)$, and introduced by Aberbach and Enescu \cite{AEFpure}.   
\end{remark}

In Proposition \ref{tore5}, we see a characterization of the Cartier core. This plays an important role in  Subsection \ref{subsec1} to describe the ideal $\fq_e$ for Stanley-Reisner rings. 

\begin{remark} \label{remarkCartierRadical}
Let $R$ be an $F$-finite $F$-pure ring, and $J$ be an ideal of $R$. For every $r \in \sqrt{\mathcal{P}(J)}$, $r^{p^e} \in \mathcal{P}(J)$ for some $e\in \mathbb{N}$. Since $R \subseteq R^{1/{p^e}}$ is an $R$-module split, there exists $\beta \in \Hom_R(R^{1/p^e},R)$ such that $\beta|_R=1_R$. Moreover, $r=(r^{p^e})^{1/{p^e}} \in \left( \mathcal{P}(J)\right) ^{1/p^e}$. Thus, $r= \beta(r) \in \mathcal{P}(J)$ by Lemma \ref{lema6}. Therefore, the Cartier core of $J$ is a radical ideal.
\end{remark}

Since $J_{s+1}$ is not necessarily contained in $J_s$, we need to show that $\bigcap_{s \geq e} J_s$  is the Cartier core for any $e$. 
  
\begin{proposition} \label{tore5}
Let $R$ be an $F$-finite $F$-pure ring, and $J$ be an ideal of $R$. Then, $\mathcal{P}(J) = \bigcap_{s \geq e} J_s$ for every nonnegative integer $e$.
\end{proposition}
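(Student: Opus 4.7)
The plan is to prove both inclusions. The inclusion $\mathcal{P}(J) \subseteq \bigcap_{s \geq e} J_s$ is immediate from the definition of $\mathcal{P}(J)$ as the intersection over \emph{all} nonnegative integers. For the reverse direction, I will fix a nonnegative integer $t < e$, take $f \in \bigcap_{s \geq e} J_s$, and prove that $f \in J_t$; combined with the trivial inclusions this yields $\bigcap_{s \geq e} J_s \subseteq \bigcap_{s \in \mathbb{N}} J_s = \mathcal{P}(J)$.

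To show $f \in J_t$, I fix an arbitrary $\varphi \in \Hom_R(R^{1/p^t}, R)$ and need to verify that $\varphi(f^{1/p^t}) \in J$. The idea is to extend $\varphi$ upward through the tower, mimicking the construction used in Lemma \ref{lema6}. Choose any integer $r \geq e - t$, so that $t + r \geq e$. Define $\varphi^{1/p^r} : R^{1/p^{t+r}} \to R^{1/p^r}$ by $\varphi^{1/p^r}(x^{1/p^{t+r}}) = \varphi(x^{1/p^t})^{1/p^r}$; this is $R$-linear. For any $\phi \in \Hom_R(R^{1/p^r}, R)$, the composition $\sigma := \phi \circ \varphi^{1/p^r}$ lies in $\Hom_R(R^{1/p^{t+r}}, R)$.

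Since $f \in J_{t+r}$, the definition of the Cartier contraction gives
\begin{align*}
\sigma(f^{1/p^{t+r}}) = \phi\!\left(\varphi(f^{1/p^t})^{1/p^r}\right) \in J.
\end{align*}
As $\phi$ was arbitrary in $\Hom_R(R^{1/p^r}, R)$, setting $g = \varphi(f^{1/p^t})$ we conclude that $g \in J_r$. Now the key observation is Proposition \ref{containJ_e}, which gives $J_r \subseteq J$, so $g = \varphi(f^{1/p^t}) \in J$. Since $\varphi$ was arbitrary, $f \in J_t$, and since $t < e$ was arbitrary we obtain $f \in \mathcal{P}(J)$.

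The only subtle point is the extension step, where one must check that the formula $\varphi^{1/p^r}$ really defines an $R$-linear map (it is $R^{1/p^r}$-linear, hence $R$-linear, and well-defined because taking $p^r$-th roots is a bijection on the overring). With that in place, the rest is a direct chase, relying crucially on $J_r \subseteq J$ from Proposition \ref{containJ_e} to descend from ``$g \in J_r$'' back to ``$g \in J$.''
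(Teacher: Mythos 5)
Your proof is correct, but it follows a genuinely different path from the paper's. The paper argues in two steps: first it shows $x^{p^e} \in \mathcal{P}(J)$ (using $J_e \subseteq J$ to get $x \in J$, hence $x^{p^e} \in J^{[p^s]} \subseteq J_s$ for every $s \leq e$, and simply $x \in J_s \Rightarrow x^{p^e} \in J_s$ for $s \geq e$); then it invokes Remark \ref{remarkCartierRadical}, that $\mathcal{P}(J)$ is radical, to deduce $x \in \mathcal{P}(J)$. Your argument bypasses the radical-ness remark entirely: you directly show $\bigcap_{s \geq e} J_s \subseteq J_t$ for each fixed $t < e$ by building the map $\varphi^{1/p^r}$ and composing with an arbitrary $\phi \in \Hom_R(R^{1/p^r},R)$, landing in $J_r \subseteq J$. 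This is essentially the same composition trick that the paper uses in the proof of Lemma \ref{lema6}, but applied inline and at the right index to produce the conclusion for a single $t$ at a time. What your route buys is directness (no detour through $\mathcal{P}(J)$ being radical, and in fact it proves the slightly stronger local statement $\bigcap_{s \geq e} J_s \subseteq J_t$); what the paper's route buys is brevity, since it reuses the already-established Lemma \ref{lema6} and Remark \ref{remarkCartierRadical}. Both rest on the same underlying mechanism -- composability of $p^{-e}$-linear maps plus $J_r \subseteq J$ -- and both are correct.
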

\begin{proof}
We must show that $\bigcap_{s \geq e} J_s \subseteq \mathcal{P}(J)$. Let $x\in \bigcap_{s \geq e} J_s$. Thus $x \in J$ by Proposition \ref{containJ_e}. Hence, $x^{p^s} \in J^{[p^s]}$ for every $s \leq e$. As a consequence, $x^{p^e} \in J^{[p^s]}$. As  $x^{p^e} \in \bigcap_{s \geq e} J_s$, we have that $x^{p^e} \in \mathcal{P}(J)$. Thus, $x\in \sqrt{\mathcal{P}(J)}$. Therefore, $x \in \mathcal{P}(J)$ by Remark \ref{remarkCartierRadical}.       
\end{proof}

%------------------------------------------------------------------------

\subsection{The Ideal $\fq_e$ in Stanley-Reisner Rings} \label{subsec1}

Throughout this subsection, we denote $S=K[x_1,  \ldots ,x_n]$ with $K$ an $F$-finite field of prime characteristic $p$. Let $I$ be a squarefree monomial ideal of $S$, $R=S/I$, and $\fp_1, \ldots,\fp_l$ are the minimal prime ideals of $R$. We want to compute the ideal $\fq_e$, when $\fq$ is a monomial prime ideal of $R$.

\begin{lemma}\label{monomial}
Let $J$ be a monomial ideal in $R$, and $e$ be a nonnegative integer. Then, $J_e$ and $\mathcal{P}(J)$ are monomial ideals.
\end{lemma}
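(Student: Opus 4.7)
The plan is to identify the monomial ideals of $R$ with the $\mathbb{Z}^n$-graded ideals (with respect to the natural multigrading inherited from $S$, under which each graded piece $R_\mu$ is either $K\cdot x^\mu$ or $0$) and to show that $J_e$ is $\mathbb{Z}^n$-graded. Once that is established, $\mathcal{P}(J)=\bigcap_{s\in\mathbb{N}} J_s$ is automatically monomial, being an intersection of monomial ideals.

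The key observation is that $R^{1/q}$ is naturally $\tfrac{1}{q}\mathbb{Z}^n$-graded with $\deg(a_ix^\alpha)^{1/q}=\alpha/q$, compatibly with the $R$-module structure (scalars from $K^{1/q}$ being of degree zero). By Proposition \ref{propo 2}, $R^{1/q}$ is a finitely generated $R$-module, so $\Hom_R(R^{1/q},R)$ inherits a $\tfrac{1}{q}\mathbb{Z}^n$-grading, and every $\varphi\in\Hom_R(R^{1/q},R)$ decomposes as a finite sum $\varphi=\sum_\nu\varphi_\nu$ of homogeneous components. A routine check, comparing graded pieces of the equality $\varphi(rm)=r\varphi(m)$ for $r\in R_a$ and $m\in (R^{1/q})_b$, confirms that each $\varphi_\nu$ is itself $R$-linear and hence a bona fide element of $\Hom_R(R^{1/q},R)$.

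Now suppose $f\in J_e$ and write $f=\sum_\mu f_\mu$ as a sum of $\mathbb{Z}^n$-homogeneous components, so each $f_\mu\in K x^\mu$. By the additivity of Frobenius in characteristic $p$ together with the reducedness of $R$, one has $f^{1/q}=\sum_\mu f_\mu^{1/q}$, with $f_\mu^{1/q}$ homogeneous of degree $\mu/q$. For every graded component $\varphi_\nu$ of any $\varphi\in\Hom_R(R^{1/q},R)$, the element
\begin{align*}
\varphi_\nu(f^{1/q})=\sum_\mu\varphi_\nu(f_\mu^{1/q})
\end{align*}
lies in $J$ by the very definition of $J_e$ applied to the map $\varphi_\nu$. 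Since the summands have pairwise distinct degrees $\mu/q+\nu$ in $R$ and $J$ is monomial, hence $\mathbb{Z}^n$-graded, each individual summand $\varphi_\nu(f_\mu^{1/q})$ must belong to $J$. Summing over $\nu$ gives $\varphi(f_\mu^{1/q})\in J$ for every $\mu$, and since $\varphi$ was arbitrary, $f_\mu\in J_e$. Therefore $J_e$ is $\mathbb{Z}^n$-graded, equivalently monomial.

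The only delicate point is the construction and $R$-linearity of the homogeneous components $\varphi_\nu$, but this is standard for finitely generated graded modules over a graded ring; once in hand, the remainder of the argument is a clean bookkeeping of degrees, exploiting the fact that taking $q$-th roots scales $\mathbb{Z}^n$-degrees by $1/q$ without mixing distinct monomials.
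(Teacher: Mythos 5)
Your argument is correct and follows essentially the same route as the paper: both proofs exploit the $\mathbb{N}^n$-grading on $R$ and the induced $\tfrac{1}{p^e}\mathbb{N}^n$-grading on $R^{1/p^e}$, decompose $\Hom_R(R^{1/p^e},R)$ into homogeneous pieces using finite generation of $R^{1/p^e}$, and then read off that $\varphi(f_\mu^{1/p^e})\in J$ degree by degree because $J$ is graded. The only cosmetic difference is that the paper phrases the reduction as ``we may assume $\varphi$ is homogeneous,'' whereas you explicitly decompose a general $\varphi$ into its components $\varphi_\nu$, verify each is in $J$, and re-sum; these are the same observation.
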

\begin{proof}
We note that $R$ is $\NN^n$-graded, and $R^{1/p^{e}}$ is $\frac{1}{p^{e}}\NN^n$-graded $R$-module. As $R \subseteq R^{1/p^{e}}$, we can view $R$ as an $\frac{1}{p^{e}}\NN^n$-graded $R$-module. To show that $J_e$ is a monomial ideal, it suffices to prove that $J_e$ is a homogeneous ideal with the $\NN^n$ grading. Let $r=r_{\alpha_1}+\cdots+r_{\alpha_t} \in J_e$, with $r_{\alpha_i} \in R$ of degree $\alpha_i \in \NN^{n}$. Let $\varphi \in \Hom_R(R^{1/p^{e}},R)$. Since $R^{1/p^{e}}$ is a finitely generated $R$-module, every homomorphism $R^{1/p^{e}} \longrightarrow R$ is a finite sum of graded homomorphisms. Thus, we can take $\varphi$ homogeneous of degree $\beta \in \frac{1}{p^{e}}\NN^n$. Then, $$\varphi(r^{1/p^{e}})=\varphi(r_{\alpha_1}^{1/p^{e}})+ \cdots + \varphi(r_{\alpha_t}^{1/p^{e}}) \in J,$$ and each $\varphi(r_{\alpha_i}^{1/p^{e}})$ has degree $\frac{1}{p^{e}}\alpha_i + \beta$. As $J$ is a homogeneous ideal, $\varphi(r_{\alpha_i}^{1/p^{e}})\in J$. Then, $r_{\alpha_i} \in J_e$ for all $i \in \{1,\ldots,t\}$. Therefore, $J_e$ is a homogeneous ideal.

Since $J_e$ is a monomial ideal, $\mathcal{P}(J)$ is a monomial ideal because intersection of monomial ideals is monomial.
\end{proof}

\begin{proposition}\label{proQ_e}
Given $\fq$ a monomial prime ideal of $R$, then $\fq_e=\fq^{[q]}+\mathcal{P}(\fq)$ for every $e \in \mathbb{N}$, and $q=p^e$.
\end{proposition}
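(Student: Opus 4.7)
The plan is to establish both containments of $\fq_e = \fq^{[q]} + \mathcal{P}(\fq)$. The inclusion $\fq^{[q]} + \mathcal{P}(\fq) \subseteq \fq_e$ is immediate from Propositions \ref{containJ_e} and \ref{tore5}. For the reverse inclusion, Lemma \ref{monomial} guarantees that $\fq_e$ is a monomial ideal, so it suffices to show every monomial $x^\alpha \in \fq_e$ lies in $\fq^{[q]} + \mathcal{P}(\fq)$.

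Let $T = \{j : x_j \in \fq\}$ and split $\alpha = \alpha' + \alpha''$ with $\supp(\alpha') \subseteq T$ and $\supp(\alpha'') \cap T = \emptyset$. Then $x^{\alpha''} \notin \fq$, and since $\fq_e$ is $\fq$-primary by Proposition \ref{primary}, we obtain $x^{\alpha'} \in \fq_e$. If $\alpha'_j \geq q$ for some $j \in T$ then $x^\alpha \in \fq^{[q]}$ and we are done, so assume $\alpha'_j \leq q-1$ for every $j$. The remaining goal is to show $x^{\alpha'} \in \mathcal{P}(\fq) = \bigcap_s \fq_s$.

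The key ingredient is a concrete criterion for membership in $\fq_s$ coming from Proposition \ref{propo 2}. Normalizing so that $a_1 = 1$ in the basis $\mathcal{B}$ of $K^{1/p^s}$ over $K$, write $\alpha' = p^s \mu + \nu$ with $0 \leq \nu_j \leq p^s - 1$; then $(x^{\alpha'})^{1/p^s} = x^\mu (x^\nu)^{1/p^s}$ sits in the $(1,\nu)$-summand of $R^{1/p^s} \cong \bigoplus_{i,\beta} S/J_\beta \cdot (a_i x^\beta)^{1/p^s}$ as the element $x^\mu \in S/J_\nu$. Since $\Hom_R(S/J_\beta, R) \cong (I:_S J_\beta)/I$, the images $\varphi((x^{\alpha'})^{1/p^s})$ as $\varphi$ varies over $\Hom_R(R^{1/p^s}, R)$ are exactly $\{x^\mu r : r \in (I:_S J_\nu)/I\}$. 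Because $\fq$ is prime and $I \subseteq \fq$,
\begin{equation*}
x^{\alpha'} \in \fq_s \iff x^\mu \in \fq \text{ or } (I:_S J_\nu) \subseteq \fq,
\end{equation*}
and the condition $x^\mu \in \fq$ is equivalent to $x^{\alpha'} \in \fq^{[p^s]}$.

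Applying the criterion at $s = e$ forces $\nu = \alpha'$ and $x^{\alpha'} \notin \fq^{[q]}$, so $x^{\alpha'} \in \fq_e$ yields $(I:_S J_{\alpha'}) \subseteq \fq$. This single colon-ideal containment is enough to verify all other levels. For $s \geq e$ we again have $\nu = \alpha'$ and $x^{\alpha'} \notin \fq^{[p^s]}$, so the criterion gives $x^{\alpha'} \in \fq_s$. For $s < e$, either $x^{\alpha'} \in \fq^{[p^s]} \subseteq \fq_s$ by Proposition \ref{containJ_e}, or $\alpha'_j < p^s$ for every $j \in T$, which together with $\supp(\alpha') \subseteq T$ gives $\nu = \alpha'$ and the criterion applies once more. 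Hence $x^{\alpha'} \in \mathcal{P}(\fq)$, and $x^\alpha \in \mathcal{P}(\fq) \subseteq \fq^{[q]} + \mathcal{P}(\fq)$. The main obstacle is packaging Proposition \ref{propo 2} into the criterion above; once available, the observation that $J_\nu$ depends only on $\supp(\nu)$ when $\supp(\nu) \subseteq T$ is what lets the single instance obtained at $s = e$ propagate to every $s$.
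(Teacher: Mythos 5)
Your proof is correct, and it takes a genuinely different route from the paper's. The paper argues by contradiction: starting from a monomial $x^\beta\in\fq_e\setminus(\fq^{[q]}+\mathcal{P}(\fq))$, it locates an $e'\geq e$ with $x^\beta\notin\fq_{e'}$, and then explicitly assembles a composite $\gamma=\psi\circ\phi\circ\pi_\alpha\circ\varphi\in\Hom_R(R^{1/q},R)$ (using a transition map between the $\alpha$-summand at level $e$ and the $\alpha'$-summand at level $e'$, justified by $J_\alpha\subseteq J_{\alpha'}$) whose value on $x^{\beta/q}$ factors as $x^\tau$ times an element outside $\fq$, contradicting primeness. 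Your proof instead first invokes Proposition \ref{primary} to strip off the factor $x^{\alpha''}$ prime to $\fq$ and reduce to a monomial $x^{\alpha'}$ supported on the variables of $\fq$ with all exponents $<q$; then, from Proposition \ref{propo 2} and the identification $\Hom_R(S/J_\nu,R)\cong(I:_S J_\nu)/I$, you extract the clean membership criterion $x^{\alpha'}\in\fq_s\iff x^\mu\in\fq$ or $(I:_S J_\nu)\subseteq\fq$, isolate the single containment $(I:_S J_{\alpha'})\subseteq\fq$ at level $s=e$, and observe that it propagates to every $s$ because $\nu=\alpha'$ (or else $x^{\alpha'}\in\fq^{[p^s]}$ trivially). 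What your route buys is transparency: the one inclusion $(I:_S J_{\alpha'})\subseteq\fq$ is the entire content of $x^{\alpha'}\in\mathcal{P}(\fq)$, whereas the paper's composite-map construction hides this; the cost is an extra reduction step via primaryness, which the paper avoids by working with an arbitrary $x^\beta$. Both arguments hinge on the same structural input (Proposition \ref{propo 2} and Lemma \ref{monomial}), so the difference is one of organization rather than of underlying mechanism, but the reorganization is substantial enough to count as a different proof.
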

\begin{proof}
We must show $\fq_e \subseteq \fq^{[q]}+\mathcal{P}(\fq)$. We proceed by contradiction. Let $r$ be an element in $\fq_e$. We suppose that $r \not \in \fq^{[q]}+\mathcal{P}(\fq)$. From Lemma \ref{monomial}, $\fq_e$ is a monomial ideal of $R$. Then, we can take $r=x^{\beta}$, with $\beta \in \mathbb{N}^n$.

Thus, $x^{\beta} \not \in \fq^{[q]}$, and $x^{\beta} \not \in \mathcal{P}(\fq)$. By Proposition \ref{tore5}, $x^{\beta} \not \in \bigcap_{s \geq e} \fq_s$, and so, there exists $e' \geq e$ such that $x^{\beta} \not \in \fq_{e'}$.

Let $\mathcal{A}=\{\alpha \in \mathbb{N}^n\;|\;0\leq \alpha_i\leq q-1\;$for$\;i=1,\ldots,n\},\;\mathcal{A'}=\{\alpha' \in \mathbb{N}^n\;|\;0\leq \alpha'_i\leq p^{e'}-1\;$for$\;i=1,\ldots,n\}$, $\mathcal{B}=\{{a_i}^{1/q}\;|\;i=1,\ldots,s\}$ be a base of $K^{1/q}$ as $K$-vector space, and $\mathcal{B}'=\{(a'_i)^{1/p^{e'}}\;|\;i=1,\ldots,s'\}$ be a base of $K^{1/p^{e'}}$ as $K$-vector space. We may suppose that $a_1=a'_1=1$. 

Moreover, $x^{\beta/p^e}=x^\theta x^{\alpha/p^e}$ and $x^{\beta/p^{e'}}=x^{{\theta}'} x^{{\alpha}'/p^{e'}}$, with $\theta,\;{\theta}' \in \mathbb{N}^n$, $\alpha \in \mathcal{A}$, and ${\alpha}' \in \mathcal{A'}$. As $p^{e'} \geq p^e$, then %=1,\; \cdots,\;n$; 
$\alpha_i \leq {\alpha}'_i$ and $\theta_i \geq {\theta}'_i$ for every $i$. Thus, there exists $\tau_i \in \mathbb{N}$ such that $\theta_i = {\theta}'_i+\tau_i$.

Furthermore, $J_{\alpha}=(I:x^{\alpha}) \subseteq (I:x^{{\alpha}'})=J_{{\alpha}'} $. Hence, we take a morphism $$\phi \in \Hom_R((S/J_{\alpha}) x^{\alpha/p^e},(S/J_{{\alpha}'}) x^{{\alpha}'/p^{e'}})$$ such that $\phi(x^{\alpha/p^e})=x^{{\alpha}'/p^{e'}}$.

Since $x^{\beta} \not \in \fq_{e'}$, there exists $\psi \in \Hom_R((S/J_{{\alpha}'}) x^{{\alpha}'/p^{e'}},R)$ such that $\psi(x^{{\theta}'} x^{{\alpha}'/p^{e'}})\not \in \fq$ by Proposition \ref{propo 2}. 

We have an $R$-linear map $$\varphi:R^{1/q}\longrightarrow \bigoplus_{\substack{1 \leq i \leq s\\ \alpha \in \mathcal{A}}}S/J_{\alpha}(a_ix^\alpha)^{1/q}$$ such that $$\varphi(r^{1/q})=\bigoplus_{\substack{1 \leq i \leq s\\ \alpha \in \mathcal{A}}}(r_{i,\alpha}+J_{\alpha})(a_ix^\alpha)^{1/q},$$ where $$r^{1/q} = \bigoplus_{\substack{1 \leq i \leq s\\ \alpha \in \mathcal{A}}}r_{i,\alpha}(a_ix^\alpha)^{1/q}.$$ Taking $\gamma=\psi \circ \phi \circ \pi_{\alpha} \circ \varphi$, we have $\gamma \in \Hom_R(R^{1/q},R)$, and $\gamma(x^{\beta/p^e})=\psi(x^{\theta} x^{{\alpha}'/p^{e'}})=\psi(x^{\tau}x^{{\theta}'} x^{{\alpha}'/p^{e'}})=x^{\tau}\psi(x^{{\theta}'} x^{{\alpha}'/p^{e'}})$.

In addition, $x^{\beta}=x^{q\theta} x^{\alpha}=x^{q\tau}x^{q{\theta}'} x^{\alpha}$. As $x^{\beta} \not \in \fq^{[q]}$, we get that $x^{\tau} \not \in \fq$. Since $x^{\beta} \in \fq_e$, it follows that $x^{\tau}\psi(x^{{\theta}'} x^{{\alpha}'/p^{e'}})=\gamma(x^{\beta/p^e}) \in \fq$. We get a contradiction, because $\fq$ is a prime ideal in $R$, and $x^{\tau},\;\psi(x^{{\theta}'} x^{{\alpha}'/p^{e'}}) \not \in \fq$.    
\end{proof}

\begin{proposition} \label{prodriveQ_e}
Let $e$ be a nonnegative integer, $q=p^e$, $\overline{R}=R/{\mathcal{P}(\fq)}$ with $\fq$ a monomial prime ideal in $R$, and $f \in R$. Then, the following hold.
\begin{itemize}
\item[(1)] If $f\in \fq_e$, then $\overline{f} \in (\overline{\fq})_e$;
\item[(2)] $\overline{f} \in \overline{\fq}^{[q]}$ if and only if $f\in \fq_e$.
\end{itemize} 
\end{proposition}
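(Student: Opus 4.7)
My plan is to use Proposition \ref{proQ_e}, which provides the explicit description $\fq_e=\fq^{[q]}+\mathcal{P}(\fq)$, as the single workhorse. Both parts will follow almost formally, with (2) handled first and (1) then deduced from (2).

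For part (2), I would observe that since $\mathcal{P}(\fq)\subseteq\fq$ (as $\fq_s\subseteq\fq$ for every $s$ by Proposition \ref{containJ_e}), the expansion of $\fq^{[q]}$ to $\overline{R}=R/\mathcal{P}(\fq)$ is
\[
\overline{\fq}^{[q]}=(\fq^{[q]}+\mathcal{P}(\fq))/\mathcal{P}(\fq).
\]
Hence $\overline{f}\in\overline{\fq}^{[q]}$ is equivalent to $f\in \fq^{[q]}+\mathcal{P}(\fq)$, which by Proposition \ref{proQ_e} is exactly the condition $f\in\fq_e$. This gives both directions of the equivalence in (2) simultaneously.

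For part (1), once (2) is in hand, I would argue as follows. If $f\in\fq_e$ then by (2) we have $\overline{f}\in\overline{\fq}^{[q]}$. Applying Proposition \ref{containJ_e} in the ring $\overline{R}$ to the ideal $\overline{\fq}$ yields $\overline{\fq}^{[q]}\subseteq(\overline{\fq})_e$, and therefore $\overline{f}\in(\overline{\fq})_e$, as required.

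The only point that requires verification is that Proposition \ref{containJ_e} may legitimately be invoked for $\overline{R}$, i.e., that $\overline{R}$ is itself $F$-finite and $F$-pure. $F$-finiteness is inherited from $R$ by any quotient. For $F$-purity, I would cite Lemma \ref{lema6}: the Cartier core $\mathcal{P}(\fq)$ is uniformly $F$-compatible, so any splitting $\phi\colon R^{1/p}\to R$ of the Frobenius sends $\mathcal{P}(\fq)^{1/p}$ into $\mathcal{P}(\fq)$ and therefore descends to a splitting $\overline{\phi}\colon \overline{R}^{1/p}\to\overline{R}$. I do not expect any serious obstacle here: Proposition \ref{proQ_e} has already done the real work, and the argument above is a purely formal unwinding plus the standard fact that quotients by uniformly $F$-compatible ideals remain $F$-pure.
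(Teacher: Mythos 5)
Your proof is correct and rests on the same pivot as the paper's, namely the explicit description $\fq_e=\fq^{[q]}+\mathcal{P}(\fq)$ from Proposition \ref{proQ_e}; your treatment of part (2) is essentially identical to the paper's. The one genuine variation is part (1): the paper reduces to $f$ a monomial via Lemma \ref{monomial}, then splits the containment $f \in \fq^{[q]}+\mathcal{P}(\fq)$ into the two cases $f\in\fq^{[q]}$ and $f\in\mathcal{P}(\fq)$, whereas you simply feed the conclusion of (2) into the containment $\overline{\fq}^{[q]}\subseteq(\overline{\fq})_e$ from Proposition \ref{containJ_e}. Your route is slightly cleaner since it avoids the monomial reduction entirely; note though that the paper's second case in part (1) also invokes $\overline{\fq}^{[q]}\subseteq(\overline{\fq})_e$, so neither version escapes the need to know that $\overline{R}$ is $F$-finite and $F$-pure. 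You are right to flag and justify this point (via uniform $F$-compatibility of $\mathcal{P}(\fq)$, Lemma \ref{lema6}), which the paper leaves implicit; the same descent of splittings also underlies the construction of $\overline{\varphi}$ in Lemma \ref{lema4}.
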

\begin{proof}
We show Part $(1)$. We can assume that $f$ a monomial, because $ \fq_e$ and $(\overline{\fq})_e$ are monomial ideals by Lemma \ref{monomial}. 

We have that $f \in \fq_e = \fq^{[q]}+\mathcal{P}(\fq)$ by Proposition \ref{proQ_e}. Since $f$ is a monomial, it follows that $f \in \fq^{[q]}$ or $f\in \mathcal{P}(\fq)$. If $f\in \mathcal{P}(\fq)$, then $\overline{f}=0 \in (\overline{\fq})_e$. Moreover, if $f \in \fq^{[q]}$, then $\overline{f} \in \overline{\fq}^{[q]} \subseteq (\overline{\fq})_e$. 

Now, we show Part $(2)$. From Proposition \ref{proQ_e}, we see that
\begin{align*}
\overline{f} \in \overline{\fq}^{[q]}=\overline{\fq^{[q]}}& \Leftrightarrow
 f-g \in \mathcal{P}(\fq) \; \mathrm{for \; some}\; g \in  \fq^{[q]}\\ 
 &\Leftrightarrow f \in \fq^{[q]}+\mathcal{P}(\fq)=\fq_e. 
\end{align*}
\end{proof}

\begin{proposition}\label{proQ_eseries}
Suppose $A$ as in Remark \ref{remarkdimension} and $B=A/IA$. Given $\fq$ a monomial prime ideal of $B$, then $\fq_e=\fq^{[q]}+\mathcal{P}(\fq)$ for every $e \in \mathbb{N}$, and $q=p^e$.
\end{proposition}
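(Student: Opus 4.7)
The plan is to transport the proof of Proposition~\ref{proQ_e} verbatim to the complete setting, substituting $(A,B)$ for $(S,R)$ throughout and invoking Proposition~\ref{propo4} in place of Proposition~\ref{propo 2}. The containment $\fq^{[q]}+\mathcal{P}(\fq)\subseteq \fq_e$ is immediate from Proposition~\ref{containJ_e}, so the content is the reverse inclusion.

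First I would establish the complete-local analog of Lemma~\ref{monomial}: for a monomial ideal $\fq$ of $B$, both $\fq_e$ and $\mathcal{P}(\fq)$ are monomial. The polynomial proof used the $\NN^n$-grading on $R$; in the power series setting one instead uses the diagonal torus $(L^{\times})^{u+t}$ acting on $A$, which preserves $IA$ and hence acts on $B$, on $B^{1/q}$, and (through Proposition~\ref{propo4}) on every summand $(A/J_\alpha)(a_i\underline{x}^\alpha)^{1/q}$. Every $\varphi\in \Hom_B(B^{1/q},B)$ therefore decomposes into torus-homogeneous components, whence $\fq_e$ is torus-stable, and the torus-stable ideals of $B$ are exactly the monomial ones.

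Granted monomiality of $\fq_e$, suppose for contradiction that some monomial $\underline{x}^\beta\in \fq_e$ lies outside $\fq^{[q]}+\mathcal{P}(\fq)$. Proposition~\ref{tore5} applied to $B$ supplies $e'\geq e$ with $\underline{x}^\beta\notin \fq_{e'}$. Write the base-$p^e$ and base-$p^{e'}$ digit expansions $\underline{x}^{\beta/p^e}=\underline{x}^\theta \underline{x}^{\alpha/p^e}$ and $\underline{x}^{\beta/p^{e'}}=\underline{x}^{\theta'}\underline{x}^{\alpha'/p^{e'}}$ with $\alpha\in\mathcal{A}$ and $\alpha'\in\mathcal{A}'$; a direct computation gives $\alpha_i\leq \alpha'_i$ and $\theta_i\geq \theta'_i$, so $J_\alpha=(IA:\underline{x}^\alpha)\subseteq (IA:\underline{x}^{\alpha'})=J_{\alpha'}$ and $\tau:=\theta-\theta'\in\NN^{u+t}$.

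Exactly as in Proposition~\ref{proQ_e}, choose $\psi \in \Hom_B((A/J_{\alpha'})\underline{x}^{\alpha'/p^{e'}},B)$ with $\psi(\underline{x}^{\theta'}\underline{x}^{\alpha'/p^{e'}})\notin \fq$ (existing by Proposition~\ref{propo4} and the assumption $\underline{x}^\beta\notin\fq_{e'}$), compose with the natural $A$-linear inclusion $(A/J_\alpha)\underline{x}^{\alpha/p^e}\hookrightarrow (A/J_{\alpha'})\underline{x}^{\alpha'/p^{e'}}$ and with the projection from $B^{1/q}$ onto the $(1,\alpha)$-summand, to obtain $\gamma\in \Hom_B(B^{1/q},B)$ with $\gamma(\underline{x}^{\beta/p^e})=\underline{x}^\tau\psi(\underline{x}^{\theta'}\underline{x}^{\alpha'/p^{e'}})$. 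Since $\underline{x}^\beta\notin \fq^{[q]}$, the exponent $\theta$ must vanish on every variable generating $\fq$; the same holds for $\tau=\theta-\theta'\leq \theta$, giving $\underline{x}^\tau\notin \fq$. Primeness of $\fq$ then forces $\gamma(\underline{x}^{\beta/p^e})\notin \fq$, contradicting $\underline{x}^\beta\in \fq_e$. The only genuinely new step is the monomial-ideal lemma in the power-series setting, where the grading argument of Lemma~\ref{monomial} must be replaced by a torus-action argument; everything else is a direct transcription of the polynomial proof.
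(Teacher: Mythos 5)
Your plan correctly identifies the one step that does not transport verbatim, namely the power-series analogue of Lemma~\ref{monomial}, and the rest of your transcription of Proposition~\ref{proQ_e} (digit expansions at two scales $p^e\leq p^{e'}$, the containment $J_\alpha\subseteq J_{\alpha'}$, constructing $\gamma=\psi\circ\phi\circ\pi_\alpha\circ\varphi$, the primality argument with $\underline{x}^\tau$) is sound, modulo the small slip that the map $(A/J_\alpha)\underline{x}^{\alpha/p^e}\to (A/J_{\alpha'})\underline{x}^{\alpha'/p^{e'}}$ induced by $J_\alpha\subseteq J_{\alpha'}$ is a surjection, not an inclusion.

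However, the torus-action substitute you propose for Lemma~\ref{monomial} has two genuine gaps. First, the equivalence ``torus-stable ideal of $A=L[[x_1,\dots,y_t]]$ $\iff$ monomial ideal'' needs $L$ to be infinite, since the step that extracts weight vectors from a $T$-stable subspace relies on separating characters; but the hypotheses only give $K$ (hence $L$) $F$-finite, which allows $L=\FF_p$, where the torus $(L^\times)^{u+t}$ can even be trivial. Second, even for $L$ infinite, a power series ring is not a direct \emph{sum} of its weight spaces, so the usual finite-dimensional diagonalization argument does not apply directly; one must pass to the finite-dimensional quotients $A/\m_A^N$, deduce that each $J+\m_A^N$ is monomial, and use $J=\bigcap_N(J+\m_A^N)$. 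Neither point is addressed in your sketch. A route that avoids both problems and works over any $F$-finite $L$ is to argue directly from Proposition~\ref{propo4}: writing $f^{1/q}=\bigoplus_{i,\alpha}f_{i,\alpha}(a_i\underline{x}^\alpha)^{1/q}$ and using $\Hom_A((A/J_\alpha)(a_i\underline{x}^\alpha)^{1/q},B)\cong (IA:J_\alpha)/IA$, one sees that $f\in\fq_e$ if and only if each digit $f_{i,\alpha}$ lies in the colon ideal $\bigl(\fq :_B (IA:J_\alpha)/IA\bigr)$, which is a monomial ideal of $B$ because it is an intersection of colons of the monomial ideal $\fq$ by monomials. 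Since the digit map sends monomials of $f$ to monomials of the $f_{i,\alpha}$, this forces $\fq_e$ (and hence $\mathcal{P}(\fq)=\bigcap_e\fq_e$) to be monomial; this is the content the paper implicitly invokes when it declares the proof ``analogous.''
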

\begin{proof}
The proof is analogous to Proposition \ref{proQ_e}.
\end{proof}

\begin{proposition} \label{proderiveQ_eseries}
Suppose $A$ as in Remark \ref{remarkdimension} and $B=A/IA$. Let $e$ be a nonnegative integer, $q=p^e$, $\overline{B}=B/{\mathcal{P}(\fq)}$ with $\fq$ a monomial prime ideal in $B$, and $f \in B$. Then, the following hold.
\begin{itemize}
\item[(1)] If $f\in \fq_e$, then $\overline{f} \in (\overline{\fq})_e$;
\item[(2)] $\overline{f} \in \overline{\fq}^{[q]}$ if and only if $f\in \fq_e$.
\end{itemize} 
\end{proposition}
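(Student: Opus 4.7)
The plan is to mirror the proof of Proposition \ref{prodriveQ_e} essentially verbatim, since the complete local setting of Remark \ref{remarkdimension} preserves the monomial structure that makes the earlier argument work. The two ingredients we already have in hand for $B$ are: Proposition \ref{proQ_eseries}, which gives $\fq_e = \fq^{[q]} + \mathcal{P}(\fq)$, and the analog of Lemma \ref{monomial} stating that $\fq_e$ and $\mathcal{P}(\fq)$ are monomial ideals of $B$. The latter holds by the same grading argument: $A=L[[x_1,\ldots,x_u,y_1,\ldots,y_t]]$ carries a monomial (i.e.\ $\NN^{u+t}$) structure on the monomials $\underline{x}^\theta$, the module $A^{1/p^e}$ decomposes as in Proposition \ref{propo4}, and any $\varphi \in \Hom_A(A^{1/p^e},A)$ splits as a finite sum of monomial-homogeneous maps, so one may test membership in $\fq_e$ degree-by-degree.

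For part $(1)$, I reduce to the case where $f$ is a monomial $\underline{x}^\beta$ using that $\fq_e$ is monomial. By Proposition \ref{proQ_eseries}, $f \in \fq^{[q]} + \mathcal{P}(\fq)$. Since $\fq^{[q]}$ and $\mathcal{P}(\fq)$ are both monomial ideals, and the sum of two monomial ideals is a monomial ideal in which the set of monomials lies in the union of the sets of monomials of each summand, the monomial $f$ lies in at least one of the two. In the first case, $\overline{f} \in \overline{\fq}^{[q]} \subseteq (\overline{\fq})_e$ by Proposition \ref{containJ_e} applied to $\overline{B}$ (which is still $F$-finite and $F$-pure, as $B/\mathcal{P}(\fq)$ is a quotient by a uniformly $F$-compatible ideal). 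In the second case, $\overline{f} = 0 \in (\overline{\fq})_e$.

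For part $(2)$, the equivalence follows directly from Proposition \ref{proQ_eseries}: we have
\begin{align*}
\overline{f}\in \overline{\fq}^{[q]}=\overline{\fq^{[q]}} & \Leftrightarrow f-g\in \mathcal{P}(\fq)\text{ for some }g\in \fq^{[q]}\\
& \Leftrightarrow f\in \fq^{[q]}+\mathcal{P}(\fq) = \fq_e.
\end{align*}

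The only subtlety is confirming that the decomposition of $A^{1/p^e}$ in Proposition \ref{propo4} suffices to run the monomiality argument for Lemma \ref{monomial} in the power-series setting; since $A^{1/p^e}$ is a finitely generated $A$-module with an explicit monomial basis $(a_i\underline{x}^\alpha)^{1/q}$, any $A$-linear map into $A$ is determined by its finitely many values on this basis and hence decomposes into monomial-homogeneous pieces just as in the polynomial case. No new ideas are needed beyond this bookkeeping.
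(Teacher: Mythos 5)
Your proposal is correct and is exactly the paper's approach: the paper proves this proposition by declaring it analogous to Proposition \ref{prodriveQ_e}, and your argument simply spells out that analogy --- reduce to a monomial $f$ using the power-series analog of Lemma \ref{monomial}, invoke Proposition \ref{proQ_eseries} to write $\fq_e=\fq^{[q]}+\mathcal{P}(\fq)$, split the monomial into the two summands for part $(1)$, and run the same two-line equivalence for part $(2)$. The extra bookkeeping you supply (the $\NN^{u+t}$-grading on $A^{1/p^e}$ giving the monomiality of $\fq_e$, and the $F$-purity of $\overline{B}$ needed to make sense of $(\overline{\fq})_e$) is accurate and fills in details the paper leaves implicit.
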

\begin{proof}
The proof is analogous to Proposition \ref{prodriveQ_e}.
\end{proof}

%%%%%%%%%%%%%%%%%%%%%%%%%%%%%%%%%%%%%%%%%%%%%%%%%%%%%%%%%%%%%%%%%%%%%%%%%%
\section{Cartier Threshold of $\mathfrak{a}$ with Respect to $J$}
%%%%%%%%%%%%%%%%%%%%%%%%%%%%%%%%%%%%%%%%%%%%%%%%%%%%%%%%%%%%%%%%%%%%%%%%%

In this section we prove another one of our main results, Theorem \ref{MainThmA}. In order to obtain this, we recall the definition of the Cartier threshold of $\mathfrak{a}$ with respect to $J$. We give some properties of this and show that it is preserved under localization and completion. We study its relation with the $F$-thresholds. We also compare this number with its analog in $\overline{R}=R/\mathcal{P}(J)$.
 
%--------------------------------------------------------------------
\subsection{Definition and First Properties}
%-----------------------------------------------------------------

In this subsection $R$ denotes a ring of prime characteristic $p$. We give properties related to Cartier thresholds. 

\begin{definition}[{\cite{DSHNBW}}]
Let $R$ be an $F$-finite $F$-pure ring. Given $\mathfrak{a}$, $J$ two ideals in $R$ such that $\mathfrak{a} \subseteq \sqrt{J}$, we define 
\begin{align*}
b_{\mathfrak{a}}^J(p^e)&= \max \{t \in \mathbb{N}\;|\; \mathfrak{a}^t \not \subseteq J_e\}.
\end{align*}
 
We define the Cartier threshold of $\mathfrak{a}$ in $R$ with respect to $J$ by
\begin{align*}
\ct_J(\mathfrak{a})&= \lim\limits_{e \rightarrow \infty} {\frac{b_{\mathfrak{a}}^J(p^e)}{p^e}}.
\end{align*}

If $(R,\mathfrak{m},K)$ is a local ring or a standard graded $K$-algebra and $\mathfrak{m}=J$, the Cartier threshold $\ct_J(\mathfrak{a})$ coincides with the $F$-pure threshold $\fpt(\mathfrak{a})$. When $\mathfrak{a}=\mathfrak{m}$, $\fpt(\mathfrak{m})$ is denoted by $\fpt(R)$.   
\end{definition}

Using Proposition \ref{pro6}, it follows that $ \ct_J(\mathfrak{a})$ also commutes with arbitrary intersections. 

\begin{proposition} \label{teo6} 
Let $R$ be an $F$-finite $F$-pure ring. Let $\{\fq_i\}_i$ be a family of ideals in $R$, and $J=\bigcap_i\fq_i$. Let $\mathfrak{a}$ be an ideal in $R$ such that $\mathfrak{a} \subseteq \sqrt{J}$. Then, $\ct_J(\mathfrak{a})=\sup\{\ct_{\fq_i}(\mathfrak{a})\}$. 
\end{proposition}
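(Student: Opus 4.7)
The plan is to convert the claim into a pointwise statement about the functions $b_\fa^{\fq_i}(p^e)$ using Proposition \ref{pro6}, and then pass to the limit. Since $J = \bigcap_i \fq_i$, Proposition \ref{pro6} gives $J_e = \bigcap_i (\fq_i)_e$. From this I extract the pointwise identity
\[
b_\fa^J(p^e) \;=\; \sup_i b_\fa^{\fq_i}(p^e)
\]
for each $e$, because $\fa^t \not\subseteq \bigcap_i (\fq_i)_e$ if and only if $\fa^t \not\subseteq (\fq_i)_e$ for some index $i$. Moreover, since $\fa \subseteq \sqrt{J}$ forces $b_\fa^J(p^e)$ to be a finite non-negative integer, this supremum is in fact attained.

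The easy inequality $\ct_J(\fa) \geq \sup_i \ct_{\fq_i}(\fa)$ comes from observing that $J \subseteq \fq_i$ implies $J_e \subseteq (\fq_i)_e$ directly from Definition \ref{defJ_e}, hence $b_\fa^J(p^e) \geq b_\fa^{\fq_i}(p^e)$ for each $i$ and $e$. Dividing by $p^e$ and letting $e \to \infty$ yields $\ct_J(\fa) \geq \ct_{\fq_i}(\fa)$ for every $i$, and taking the supremum gives the claim.

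For the reverse inequality, I work with a finite family $\{\fq_i\}$, which is the case relevant to the Stanley-Reisner applications (e.g., intersections of finitely many minimal primes). Given $\epsilon > 0$, finiteness of the index set lets me choose a single $E$ such that $\frac{b_\fa^{\fq_i}(p^e)}{p^e} \leq \ct_{\fq_i}(\fa) + \epsilon$ for every $i$ and every $e \geq E$. Then
\[
\frac{b_\fa^J(p^e)}{p^e} \;=\; \max_i \frac{b_\fa^{\fq_i}(p^e)}{p^e} \;\leq\; \sup_i \ct_{\fq_i}(\fa) + \epsilon,
\]
and letting $e \to \infty$ followed by $\epsilon \to 0$ yields $\ct_J(\fa) \leq \sup_i \ct_{\fq_i}(\fa)$. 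The main technical obstacle lies in extending this argument to an infinite family: the $b$-analog of Lemma \ref{lem4} only furnishes a lower bound on $\frac{b_\fa^{\fq_i}(p^e)}{p^e}$ in terms of $\ct_{\fq_i}(\fa)$, not an upper one, so a uniform rate of convergence across the family does not follow from that estimate alone and a separate argument (or restriction to finite subfamilies via Noetherianity) is needed.
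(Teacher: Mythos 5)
Your first two steps match the paper's argument exactly: use Proposition \ref{pro6} to reduce to the pointwise identity $b_\fa^J(p^e) = \sup_i b_\fa^{\fq_i}(p^e)$, and deduce $\ct_J(\fa) \geq \sup_i \ct_{\fq_i}(\fa)$ by termwise comparison. Your treatment of finite families via a uniform threshold $E$ is also sound. However, you explicitly leave a gap for an infinite family $\{\fq_i\}_i$, and the escape route you float, ``restriction to finite subfamilies via Noetherianity,'' does not work: an intersection $\bigcap_i \fq_i$ in a Noetherian ring need not equal any finite subintersection (take $\bigcap_p (p) = (0)$ in $\ZZ$). So the proof is incomplete precisely where the statement is most general, and the application in Corollary \ref{mainresult} is not the only intended scope.

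The missing ingredient is that, because $R$ is $F$-pure, the sequence $e \mapsto \frac{b_\fa^{\fq_i}(p^e)}{p^e}$ is non-decreasing, so every term is bounded above by its limit $\ct_{\fq_i}(\fa)$; this gives a uniform-in-$i$ bound for free. For the monotonicity: if $\fa^t \not\subseteq (\fq_i)_e$, pick $f \in \fa^t$ and $\psi \in \Hom_R(R^{1/p^e},R)$ with $\psi(f^{1/p^e}) \notin \fq_i$. Since $R \subseteq R^{1/p}$ splits, so does $R^{1/p^e} \subseteq R^{1/p^{e+1}}$; composing $\psi$ with a splitting $\pi:R^{1/p^{e+1}} \to R^{1/p^e}$ yields $\varphi=\psi\circ\pi \in \Hom_R(R^{1/p^{e+1}},R)$ with $\varphi((f^p)^{1/p^{e+1}})=\psi(f^{1/p^e})\notin\fq_i$, so $f^p \in \fa^{pt}\setminus(\fq_i)_{e+1}$ and hence $b_\fa^{\fq_i}(p^{e+1}) \geq p\,b_\fa^{\fq_i}(p^e)$. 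With this, $\frac{b_\fa^J(p^e)}{p^e} = \sup_i \frac{b_\fa^{\fq_i}(p^e)}{p^e} \leq \sup_i \ct_{\fq_i}(\fa)$ holds for every $e$, and letting $e\to\infty$ closes the argument with no finiteness hypothesis on the index set. (The published proof also passes over this point silently; the step is correct but does rely on this monotonicity.)
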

\begin{proof}
By Proposition \ref{pro6}, we have that $J_e=\bigcap_i(\fq_i)_e$ for every nonnegative integer $e$. Then,
\begin{align*} 
t \geq b_{\mathfrak{a}}^{J}(p^e)& \Leftrightarrow \mathfrak{a}^{t+1} \subseteq J_e\\
 & \Leftrightarrow \mathfrak{a}^{t+1} \subseteq (\fq_i)_e \; \mathrm{for \; every} \;i\\
  & \Leftrightarrow t \geq b_{\mathfrak{a}}^{\fq_i}(p^e) \; \mathrm{for \;every} \; i\\
   & \Leftrightarrow t \geq \sup\{b_{\mathfrak{a}}^{\fq_i}(p^e)\}.
\end{align*}

Hence, $\frac{ b_{\mathfrak{a}}^{J}(p^e)}{p^e}=\sup\left\{\frac{b_{\mathfrak{a}}^{\fq_i}(p^e)}{p^e}\right\}$. Therefore, $\ct_J(\mathfrak{a})=\sup\{\ct_{\fq_i}(\mathfrak{a})\}$.      
\end{proof}

Since $\fq_e$ is a $\fq$-primary ideal by Proposition \ref{primary}, we have that $\ct_J(\mathfrak{a})$ is preserved under localization. This fact, we prove it in Proposition \ref{teo9} below.

\begin{lemma}\label{lem5}
Let $R$ be an $F$-finite $F$-pure ring, $\fq$ be a prime ideal of $R$, and $f \in R$. Then, $\frac{f}{1} \in I_e(R_\fq)$ if and only if $f \in \fq_e$.
\end{lemma}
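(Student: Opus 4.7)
The plan is to pass between $\Hom_R(R^{1/p^e},R)$ and $\Hom_{R_\fq}(R_\fq^{1/p^e},R_\fq)$ via localization, using that $R$ is $F$-finite and Noetherian, so $R^{1/p^e}$ is a finitely presented $R$-module and therefore
\[
\Hom_R(R^{1/p^e},R)_\fq \;\cong\; \Hom_{R_\fq}(R_\fq^{1/p^e},R_\fq).
\]
In particular, every $\psi \in \Hom_{R_\fq}(R_\fq^{1/p^e},R_\fq)$ has the form $\varphi/s$ for some $\varphi \in \Hom_R(R^{1/p^e},R)$ and $s \in R \setminus \fq$, and conversely every $\varphi \in \Hom_R(R^{1/p^e},R)$ localizes to a map $\varphi_\fq$ on $R_\fq^{1/p^e}$.

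For the forward direction, assume $f \in \fq_e$ and fix an arbitrary $\psi \in \Hom_{R_\fq}(R_\fq^{1/p^e},R_\fq)$. Write $\psi = \varphi/s$ with $\varphi \in \Hom_R(R^{1/p^e},R)$ and $s \notin \fq$. Then
\[
\psi\!\left((f/1)^{1/p^e}\right) \;=\; \frac{\varphi(f^{1/p^e})}{s},
\]
and the numerator lies in $\fq$ because $f \in \fq_e$. Hence $\psi((f/1)^{1/p^e}) \in \fq R_\fq$ for every such $\psi$, giving $f/1 \in I_e(R_\fq)$.

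For the reverse direction, assume $f/1 \in I_e(R_\fq)$ and fix an arbitrary $\varphi \in \Hom_R(R^{1/p^e},R)$. Localizing produces $\varphi_\fq \in \Hom_{R_\fq}(R_\fq^{1/p^e},R_\fq)$, and by hypothesis
\[
\varphi_\fq\!\left((f/1)^{1/p^e}\right) \;=\; \frac{\varphi(f^{1/p^e})}{1} \in \fq R_\fq.
\]
So there is some $s \in R \setminus \fq$ with $s\,\varphi(f^{1/p^e}) \in \fq$. Since $\fq$ is prime and $s \notin \fq$, we conclude $\varphi(f^{1/p^e}) \in \fq$. As $\varphi$ was arbitrary, $f \in \fq_e$.

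The only substantive point is the compatibility of Hom with localization, which is immediate from $R^{1/p^e}$ being a finitely presented $R$-module; once that is in hand, both directions reduce to a direct chase, with the primality of $\fq$ used exactly once to clear the denominator in the reverse direction. I do not anticipate any real obstacle beyond being careful about the identification $(R^{1/p^e})_\fq = (R_\fq)^{1/p^e}$.
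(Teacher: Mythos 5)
Your proof is correct and follows essentially the same route as the paper: both directions use the localization identification $\Hom_R(R^{1/p^e},R)_\fq \cong \Hom_{R_\fq}(R_\fq^{1/p^e},R_\fq)$ (valid since $R^{1/p^e}$ is a finitely presented $R$-module) and invoke the primality of $\fq$ exactly once to contract from $\fq R_\fq$ back to $\fq$. The only difference from the paper is that you spell out explicitly why the Hom-localization isomorphism holds, which the paper leaves implicit.
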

\begin{proof}
We focus on the first direction. Let $\psi \in \Hom_R(R^{1/p^e},R)$. Since $(R^{1/p^e})_\fq \cong {R_\fq}^{1/p^e}$ as $R_\fq$-module, $\psi_\fq \in \Hom_{R_\fq}({R_\fq}^{1/p^e},R_\fq)$, and so, $\frac{\psi(f^{1/p^e})}{1}=\psi_\fq(\frac{f^{1/p^e}}{1})=\psi_\fq((\frac{f}{1})^{1/p^e}) \in \fq R_\fq$. Hence, as $\fq$ is a prime ideal, $\psi(f^{1/p^e})\in \fq$. Therefore, $f\in \fq_e$.

We now show the other direction. Let $\psi \in \Hom_{R_\fq}({R_\fq}^{1/p^e},R_\fq)$. Since $\Hom_{R_\fq}({R_\fq}^{1/p^e},R_\fq) \cong \Hom_R(R^{1/p^e},R)_{\fq}$, we have that $\psi = \frac{1}{s}  \varphi_{\fq}$ for some $\varphi \in\Hom_R(R^{1/p^e},R)$ and $s \in R \setminus \fq$. As a consequence, $\psi((\frac{f}{1})^{1/p^e})=\psi(\frac{f^{1/p^e}}{1})=\frac{\varphi(f^{1/p^e})}{s} \in  \fq R_\fq$. Therefore, $\frac{f}{1} \in I_e(R_\fq)$. 

%We now show the another direction. Let $\psi \in \Hom_{R_\fq}({R_\fq}^{1/p^e},R_\fq)$. There exists $s \in R \setminus \fq$ such that $s\psi \in \Hom_R(R^{1/p^e},R)$. Then, $(s\psi)(f^{1/p^e})\in \fq$, and so, $\frac{(s\psi)(f^{1/p^e})}{s}\in \fq R_\fq$. Thus $\psi(f^{1/p^e})\in \fq R_\fq$. Therefore, $\frac{f}{1} \in I_e(R_\fq)$.      
\end{proof}
\begin{proposition}\label{teo9}
Let $R$ be an $F$-finite $F$-pure ring. Let $\mathfrak{a},\; \fq$ be two ideals of R with $\fq$ a prime ideal, and $\mathfrak{a} \subseteq \fq$. Then, $\ct_{\fq}(\mathfrak{a})=\fpt(\mathfrak{a}R_\fq)$.   
\end{proposition}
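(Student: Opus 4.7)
The plan is to show that the sequences defining both thresholds agree term-by-term, namely that $b_{\mathfrak{a}}^{\fq}(p^e) = b_{\mathfrak{a}R_{\fq}}^{\fq R_{\fq}}(p^e)$ for every $e\in \NN$. Since $R_{\fq}$ is local with maximal ideal $\fq R_{\fq}$, we have $(\fq R_{\fq})_e = I_e(R_{\fq})$, so the $F$-pure threshold $\fpt(\mathfrak{a}R_{\fq})$ is precisely the Cartier threshold $\ct_{\fq R_{\fq}}(\mathfrak{a}R_{\fq})$, and the equality of the $b$-sequences immediately yields the result after taking the limit as $e\to\infty$.

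To get the equality of $b$-sequences, it suffices to establish the equivalence
\begin{equation*}
\mathfrak{a}^t \subseteq \fq_e \quad \Longleftrightarrow \quad (\mathfrak{a}R_{\fq})^t \subseteq I_e(R_{\fq}),
\end{equation*}
for every $t,e \in \NN$. The main tool is Lemma \ref{lem5}, which characterizes membership in $I_e(R_{\fq})$ in terms of $\fq_e$: $\tfrac{f}{1} \in I_e(R_{\fq})$ iff $f \in \fq_e$. For the forward direction, if $\mathfrak{a}^t \subseteq \fq_e$, take any element of $(\mathfrak{a}R_{\fq})^t = \mathfrak{a}^t R_{\fq}$, write it as $f/s$ with $f \in \mathfrak{a}^t$ and $s \in R \setminus \fq$, and apply Lemma \ref{lem5} together with the fact that $s$ is a unit in $R_{\fq}$. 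For the converse, given $f \in \mathfrak{a}^t$ the element $f/1$ lies in $(\mathfrak{a}R_{\fq})^t \subseteq I_e(R_{\fq})$, and Lemma \ref{lem5} gives $f \in \fq_e$, so $\mathfrak{a}^t \subseteq \fq_e$.

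Combining the two directions, the integers $b_{\mathfrak{a}}^{\fq}(p^e)$ and $b_{\mathfrak{a}R_{\fq}}^{\fq R_{\fq}}(p^e)$ agree for every $e$, and dividing by $p^e$ and passing to the limit gives $\ct_{\fq}(\mathfrak{a}) = \ct_{\fq R_{\fq}}(\mathfrak{a}R_{\fq}) = \fpt(\mathfrak{a}R_{\fq})$. There is no serious obstacle here; the proof is essentially bookkeeping on top of Lemma \ref{lem5}, and the only minor point to verify is that $(\mathfrak{a}R_{\fq})^t = \mathfrak{a}^t R_{\fq}$, which follows from compatibility of ideal multiplication with localization.
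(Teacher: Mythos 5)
Your proposal is correct and follows essentially the same route as the paper: both reduce the claim to showing $b_{\mathfrak{a}}^{\fq}(p^e) = b_{\mathfrak{a}R_{\fq}}^{\fq R_{\fq}}(p^e)$ using Lemma \ref{lem5} together with $(\mathfrak{a}R_{\fq})^t = \mathfrak{a}^t R_{\fq}$. The paper compresses this into a single chain of equalities, while you spell out the two implications, but the underlying argument is identical.
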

\begin{proof}
By Lemma \ref{lem5}, we observe that,
\begin{align*}
b_{\mathfrak{a}}^{\fq}(p^e)&=\max \{t \in \mathbb{N} \; |\; \mathfrak{a}^t \not \subseteq \fq_e \} \\ 
&=\max \{t \in \mathbb{N}\; |\; {\mathfrak{a}^t}R_\fq  \not \subseteq I_e(R_\fq) \}\\ 
&=\max \{t \in \mathbb{N}\; |\; (\mathfrak{a}R_\fq)^t  \not \subseteq I_e(R_\fq) \} \\
&=b_{\mathfrak{a}R_\fq}^{\fq R_\fq}(p^e).
\end{align*}
Therefore, $\ct_{\fq}(\mathfrak{a})=\fpt(\mathfrak{a}R_\fq)$.  
\end{proof} 

Consider a local ring $(R,\mathfrak{m},K)$. Let $\mathfrak{a} \subseteq  \sqrt{J}$ be two ideals of $R$. We claim that the Cartier threshold of $\mathfrak{a} $ with respect to $J$ does not vary under completion. To show this, we compare the ideal $J_e$ versus $(J \widehat{R})_e $.

\begin{lemma}\label{lem6}
Let $(R,\mathfrak{m},K)$ be an $F$-finite $F$-pure local ring, $f\in R$, and $J$ be an ideal in $R$. Then, $f\in J_e$ if and only if $f\in (J\widehat{R})_e$.
\end{lemma}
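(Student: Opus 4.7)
The plan is to reduce both conditions to the flat base change $R \to \widehat{R}$ applied to the module of $p^{e}$-th root maps. The key is the identification $\widehat{R}^{1/p^{e}} \cong R^{1/p^{e}}\otimes_{R}\widehat{R}$, which holds because $R$ is $F$-finite, so $R^{1/p^{e}}$ is a finitely generated (hence finitely presented) $R$-module, and both sides agree with the $\mathfrak{m}$-adic completion of $R^{1/p^{e}}$. With this in hand, finite presentation together with flatness of $\widehat{R}$ over $R$ yield a natural evaluation-compatible isomorphism
\[
\Hom_{R}(R^{1/p^{e}},R)\otimes_{R}\widehat{R}\ \xrightarrow{\ \cong\ }\ \Hom_{\widehat{R}}(\widehat{R}^{1/p^{e}},\widehat{R}),
\]
sending an elementary tensor $\varphi\otimes a$ to the $\widehat{R}$-linear extension $\widehat{\varphi}$ scaled by $a$.

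For the forward implication, suppose $f\in J_{e}$ and let $\psi\in\Hom_{\widehat{R}}(\widehat{R}^{1/p^{e}},\widehat{R})$. The isomorphism above lets us write $\psi=\sum_{i=1}^{n}a_{i}\,\widehat{\varphi_{i}}$ with $a_{i}\in\widehat{R}$ and $\varphi_{i}\in\Hom_{R}(R^{1/p^{e}},R)$; this is a \emph{finite} sum because $\Hom_{R}(R^{1/p^{e}},R)$ is a finitely generated $R$-module. Evaluating,
\[
\psi(f^{1/p^{e}})\ =\ \sum_{i=1}^{n}a_{i}\,\varphi_{i}(f^{1/p^{e}})\ \in\ J\widehat{R},
\]
since each $\varphi_{i}(f^{1/p^{e}})\in J$ by hypothesis. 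Hence $f\in (J\widehat{R})_{e}$.

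For the converse, suppose $f\in (J\widehat{R})_{e}$ and let $\varphi\in\Hom_{R}(R^{1/p^{e}},R)$. Its extension $\widehat{\varphi}\in\Hom_{\widehat{R}}(\widehat{R}^{1/p^{e}},\widehat{R})$ satisfies $\widehat{\varphi}(f^{1/p^{e}})=\varphi(f^{1/p^{e}})\in J\widehat{R}$ by assumption. But $\varphi(f^{1/p^{e}})\in R$, and faithful flatness of $\widehat{R}$ over $R$ gives $J\widehat{R}\cap R=J$, so $\varphi(f^{1/p^{e}})\in J$. As $\varphi$ was arbitrary, $f\in J_{e}$.

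The only real subtlety is the justification of the two identifications in the first paragraph: that $\widehat{R}^{1/p^{e}}$ coincides with $R^{1/p^{e}}\otimes_{R}\widehat{R}$, and that the base-change map on $\Hom$ is an isomorphism. Both rest on $F$-finiteness (to get finite presentation of $R^{1/p^{e}}$) and flatness of completion; beyond these standard inputs the proof is a direct unwinding of definitions, so I expect no further obstacle.
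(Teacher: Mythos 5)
Your proof is correct and follows essentially the same route as the paper: both identify $\widehat{R}^{1/p^e}$ with the completion of $R^{1/p^e}$, use base change to write an arbitrary $\widehat{R}$-linear map as a finite $\widehat{R}$-combination of completed $R$-linear maps for the forward direction, and use faithful flatness (so that $J\widehat{R}\cap R = J$) for the converse. The paper is slightly terser on the two identifications you flag as the subtle point, but the content is the same.
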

\begin{proof}
We suppose that $f\in J_e$. Let $\varphi \in \Hom_{\widehat{R}}(\widehat{R}^{1/p^e},\widehat{R})$. Since $R$ is an $F$-finite ring and  $\widehat{R}^{1/p^e} \cong \widehat{R^{1/p^e}}$ as $\widehat{R}$-module, we have
\begin{align*}
\Hom_{\widehat{R}}(\widehat{R}^{1/p^e},\widehat{R})&\cong 
 \lhat{78}{\Hom_R(R^{1/p^e},R)}\\
&\cong \Hom_R(R^{1/p^e},R)\otimes_R \widehat{R}. 
\end{align*}
Hence, $\varphi=\sum_{i=1}^{n}\varphi_i \otimes r_i$ with $\varphi_i \in \Hom_R(R^{1/p^e},R)$ and $r_i \in \widehat{R}$. Then, $\varphi(f^{1/p^e})=\sum_{i=1}^{n}r_i\varphi_i(f^{1/p^e})$. However, $f\in J_e$, in consequence $\varphi_i(f^{1/p^e}) \in J$, thus $\varphi(f^{1/p^e})\in J\widehat{R}$. Therefore, $f\in (J\widehat{R})_e$.

We now suppose that $f\in (J\widehat{R})_e$. Let $\varphi \in \Hom_R(R^{1/p^e},R)$. Since $\widehat{R}^{1/p^e} \cong \widehat{R^{1/p^e}}$ as $\widehat{R}$-module, we have $\widehat{\varphi} \in \Hom_{\widehat{R}}(\widehat{R}^{1/p^e},\widehat{R})$. Then, $\widehat{\varphi}(f^{1/p^e})\in J\widehat{R}$, and so, $\varphi(f^{1/p^e})\in J$. Therefore, $f\in J_e$.           
\end{proof}
\begin{proposition} \label{teo10}
Suppose that $(R,\mathfrak{m},K)$ is an $F$-finite $F$-pure local ring. Let $\mathfrak{a}$, $J$ be two ideals in $R$ such that $\mathfrak{a} \subseteq \sqrt{J}$. Then, $\ct_J(\mathfrak{a})=\ct_{J\widehat{R}}(\mathfrak{a} \widehat{R})$.  
\end{proposition}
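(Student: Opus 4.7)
The plan is to show that for every nonnegative integer $e$, the invariants $b_{\mathfrak{a}}^J(p^e)$ and $b_{\mathfrak{a}\widehat{R}}^{J\widehat{R}}(p^e)$ coincide; dividing by $p^e$ and taking the limit as $e \to \infty$ then yields the claimed equality of Cartier thresholds. The main workhorse is Lemma \ref{lem6}, which identifies $J_e = (J\widehat{R})_e \cap R$ for $R \subseteq \widehat{R}$.

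First I would remark that the Cartier threshold on the right-hand side is well-defined: $F$-finiteness of $R$ passes to the $\mathfrak{m}$-adic completion, and $F$-purity is preserved under faithfully flat local extensions, so $\widehat{R}$ is an $F$-finite $F$-pure local ring. Moreover $\mathfrak{a} \subseteq \sqrt{J}$ implies $\mathfrak{a}\widehat{R} \subseteq \sqrt{J\widehat{R}}$, so $b_{\mathfrak{a}\widehat{R}}^{J\widehat{R}}(p^e)$ is defined.

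Next, I fix $e \ge 0$ and $t \in \NN$ and compare the conditions $\mathfrak{a}^t \subseteq J_e$ and $(\mathfrak{a}\widehat{R})^t \subseteq (J\widehat{R})_e$. If $\mathfrak{a}^t \subseteq J_e$, then every $f \in \mathfrak{a}^t$ satisfies $f \in (J\widehat{R})_e$ by Lemma \ref{lem6}, and since $(J\widehat{R})_e$ is an ideal of $\widehat{R}$ we conclude $(\mathfrak{a}\widehat{R})^t = \mathfrak{a}^t\widehat{R} \subseteq (J\widehat{R})_e$. Conversely, if $(\mathfrak{a}\widehat{R})^t \subseteq (J\widehat{R})_e$, then in particular each $f \in \mathfrak{a}^t \subseteq R$ lies in $(J\widehat{R})_e$, and Lemma \ref{lem6} gives $f \in J_e$; hence $\mathfrak{a}^t \subseteq J_e$. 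Therefore the two conditions are equivalent, and consequently
\begin{align*}
b_{\mathfrak{a}}^J(p^e) \;=\; b_{\mathfrak{a}\widehat{R}}^{J\widehat{R}}(p^e)
\end{align*}
for every $e$. Dividing by $p^e$ and passing to the limit yields $\ct_J(\mathfrak{a}) = \ct_{J\widehat{R}}(\mathfrak{a}\widehat{R})$.

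There is no real obstacle here: the technical content has been packaged into Lemma \ref{lem6}, whose proof relied on the identification $\widehat{R}^{1/p^e} \cong \widehat{R^{1/p^e}}$ together with $\Hom_{\widehat{R}}(\widehat{R}^{1/p^e},\widehat{R}) \cong \Hom_R(R^{1/p^e},R)\otimes_R \widehat{R}$. The only subtlety worth flagging is the use of the extension $\mathfrak{a}^t\widehat{R} = (\mathfrak{a}\widehat{R})^t$, which is immediate, and the elementary observation that testing the containment $(\mathfrak{a}\widehat{R})^t \subseteq (J\widehat{R})_e$ on generators of $\mathfrak{a}^t$ (coming from $R$) suffices because $(J\widehat{R})_e$ is an $\widehat{R}$-submodule of $\widehat{R}$.
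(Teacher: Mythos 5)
Your proof is correct and follows essentially the same route as the paper: both reduce the equality $\ct_J(\mathfrak{a}) = \ct_{J\widehat{R}}(\mathfrak{a}\widehat{R})$ to the termwise equality $b_{\mathfrak{a}}^J(p^e) = b_{\mathfrak{a}\widehat{R}}^{J\widehat{R}}(p^e)$, which in turn is deduced from Lemma \ref{lem6} via the equivalence $\mathfrak{a}^t \subseteq J_e \Leftrightarrow (\mathfrak{a}\widehat{R})^t \subseteq (J\widehat{R})_e$. You simply spell out the two directions and the well-definedness of the right-hand side more explicitly than the paper's compressed chain of equalities does.
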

\begin{proof}
By Lemma \ref{lem6}, we observe that
\begin{align*}
b_{\mathfrak{a}}^J(p^e)&=\max \{t \in \mathbb{N} \; |\; \mathfrak{a}^t \not \subseteq J_e \} \\ 
&=\max \{t \in \mathbb{N}\; |\; {\mathfrak{a}^t}\widehat{R} \not \subseteq (J\widehat{R})_e \}\\ 
&=\max \{t \in \mathbb{N}\; |\; (\mathfrak{a}\widehat{R})^t  \not \subseteq (J\widehat{R})_e \} \\
&=b_{\mathfrak{a}\widehat{R}}^{J\widehat{R}}(p^e).
\end{align*}
Therefore, $\ct_J(\mathfrak{a})=\ct_{J\widehat{R}}(\mathfrak{a} \widehat{R})$. 
\end{proof}

Given $J$ an ideal in $R$, we consider the ring $\overline{R}=R/\mathcal{P}(J)$. Let $\mathfrak{a}$ be an ideal in $R$ such that $\mathfrak{a} \subseteq \sqrt{J}$. Our goal is to compare the Cartier threshold of $\mathfrak{a}$ with respect to $J$ versus the Cartier threshold of $\overline{\mathfrak{a}}$ with respect to $\overline{J}$.

\begin{lemma} \label{lema4}
Let $R$ be an $F$-finite $F$-pure ring, $J$ be an ideal of $R$, $\overline{R}=R/\mathcal{P}(J)$, and $f \in R$. Then,  $\overline{f} \in (\overline{J})_e$ implies that $f \in J_e$.  
\end{lemma}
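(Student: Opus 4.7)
The plan is to show that every $R$-linear map $\varphi\colon R^{1/p^e}\to R$ descends to a $\overline{R}$-linear map $\overline{\varphi}\colon \overline{R}^{1/p^e}\to \overline{R}$, and then to exploit the hypothesis $\overline{f}\in (\overline{J})_e$ applied to these descended maps. The key input is that the Cartier core $\mathcal{P}(J)$ is uniformly $F$-compatible (Lemma \ref{lema6}), so the kernel of the projection behaves well under every $\varphi$.

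First I would observe that $\mathcal{P}(J)\subseteq J$: evaluating the definition of $J_e$ at $e=0$ gives $J_0=J$, and since $\mathcal{P}(J)=\bigcap_{s\in\NN}J_s\subseteq J_0$, the containment follows. In particular, $\overline{J}=J/\mathcal{P}(J)$ is a well-defined ideal of $\overline{R}$, and $\overline{R}^{1/p^e}\cong R^{1/p^e}/\mathcal{P}(J)^{1/p^e}$ via the natural identification of $p^e$-th roots.

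Next, fix $\varphi\in \Hom_R(R^{1/p^e},R)$. By Lemma \ref{lema6}, $\varphi\bigl(\mathcal{P}(J)^{1/p^e}\bigr)\subseteq \mathcal{P}(J)$, so $\varphi$ induces a well-defined $R$-linear (hence $\overline{R}$-linear) map
\[
\overline{\varphi}\colon \overline{R}^{1/p^e}\longrightarrow \overline{R}, \qquad \overline{\varphi}(\overline{r}^{1/p^e})=\overline{\varphi(r^{1/p^e})}.
\]
Now assume $\overline{f}\in (\overline{J})_e$. Applying the definition of $(\overline{J})_e$ to the particular homomorphism $\overline{\varphi}$ gives
\[
\overline{\varphi(f^{1/p^e})}=\overline{\varphi}(\overline{f}^{1/p^e})\in \overline{J},
\]
i.e.\ $\varphi(f^{1/p^e})\in J+\mathcal{P}(J)=J$. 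Since $\varphi$ was arbitrary, $f\in J_e$ by Definition \ref{defJ_e}.

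The only non-routine point is ensuring that every $\varphi\in \Hom_R(R^{1/p^e},R)$ really does descend; this is precisely what uniform $F$-compatibility of $\mathcal{P}(J)$ provides, so no further obstruction arises. Note that this argument is one-directional and does not give the converse: a map $\overline{\psi}\in\Hom_{\overline{R}}(\overline{R}^{1/p^e},\overline{R})$ need not lift to $\Hom_R(R^{1/p^e},R)$, which is why the lemma is phrased as an implication rather than an equivalence.
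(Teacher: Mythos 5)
Your argument is essentially identical to the paper's: both descend an arbitrary $\varphi\in\Hom_R(R^{1/p^e},R)$ to $\overline{\varphi}$ via the uniform $F$-compatibility of $\mathcal{P}(J)$ from Lemma \ref{lema6}, apply the hypothesis to $\overline{\varphi}(\overline{f}^{1/p^e})$, and use $\mathcal{P}(J)\subseteq J$ to conclude $\varphi(f^{1/p^e})\in J$. The only cosmetic difference is that you justify $\mathcal{P}(J)\subseteq J$ via $J_0=J$, whereas the paper invokes Proposition \ref{containJ_e}; both are fine.
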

\begin{proof}
For every $\varphi \in \Hom_R(R^{1/p^e},R)$, we take $\overline{\varphi}:\overline{R}^{1/p^e} \longrightarrow \overline{R}$ such that $\overline{\varphi}(\overline{x}^{1/p^e})=\overline{\varphi(x^{1/p^e})}$. By Lemma \ref{lema6}, it follows that $\overline{\varphi}$ is well defined. 

Since $\varphi \in \Hom_R(R^{1/p^e},R)$, it follows that $\overline{\varphi} \in \Hom_{\overline{R}}(\overline{R}^{1/p^e},\overline{R})$. As $\overline{f} \in (\overline{J})_e$, then $\overline{\varphi(f^{1/p^e})}=\overline{\varphi}(\overline{f}^{1/p^e}) \in \overline{J}$. Hence, there exists $y \in J$ such that $\varphi(f^{1/p^e})-y \in \mathcal{P}(J) \subseteq J$, and so  $\varphi(f^{1/p^e}) \in J$. Therefore, $f \in J_e$.       
\end{proof}
\begin{proposition} \label{tore4}
Let $R$ be an $F$-finite $F$-pure ring. Let $\mathfrak{a}$, $J$ be two ideals in $R$ such that $\mathfrak{a} \subseteq \sqrt{J}$, and $\overline{R}=R/\mathcal{P}(J)$. Then, $\ct_J(\mathfrak{a})\leq \ct_{\overline{J}}(\overline{\mathfrak{a}})$. In particular, if $(R,\mathfrak{m},K)$ is a local ring or a standard graded $K$-algebra, then $\fpt(\mathfrak{a})\leq \fpt(\overline{\mathfrak{a}})$. 
\end{proposition}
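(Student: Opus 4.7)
The plan is to establish the pointwise inequality $b_{\mathfrak{a}}^{J}(p^e)\leq b_{\overline{\mathfrak{a}}}^{\overline{J}}(p^e)$ for every nonnegative integer $e$ and then pass to the limit after dividing by $p^e$. Before doing so, I will check that $\overline{R}$ satisfies the hypotheses needed for $\ct_{\overline{J}}(\overline{\mathfrak{a}})$ to be defined: $F$-finiteness is preserved under taking quotients, and the uniform $F$-compatibility of $\mathcal{P}(J)$ proved in Lemma \ref{lema6} ensures that any splitting $\varphi\in\Hom_R(R^{1/p^e},R)$ of Frobenius descends to a well-defined $\overline{\varphi}\in\Hom_{\overline{R}}(\overline{R}^{1/p^e},\overline{R})$, which shows that $\overline{R}$ is again $F$-pure. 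The containment $\overline{\mathfrak{a}}\subseteq\sqrt{\overline{J}}$ is immediate from $\mathfrak{a}\subseteq\sqrt{J}$.

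The key ingredient is the contrapositive of Lemma \ref{lema4}: if $f\in R$ satisfies $f\notin J_e$, then $\overline{f}\notin(\overline{J})_e$. I would apply this as follows. Set $t=b_{\mathfrak{a}}^{J}(p^e)$, so that by definition $\mathfrak{a}^t\not\subseteq J_e$, and pick $f\in\mathfrak{a}^t$ with $f\notin J_e$. Its image $\overline{f}$ lies in $\overline{\mathfrak{a}}^t$ and, by the contrapositive just stated, is not in $(\overline{J})_e$. Hence $\overline{\mathfrak{a}}^t\not\subseteq(\overline{J})_e$, which forces $t\leq b_{\overline{\mathfrak{a}}}^{\overline{J}}(p^e)$. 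Dividing by $p^e$ and letting $e\to\infty$ yields $\ct_J(\mathfrak{a})\leq\ct_{\overline{J}}(\overline{\mathfrak{a}})$.

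For the \emph{in particular} statement, assume $(R,\mathfrak{m},K)$ is local or standard graded and take $J=\mathfrak{m}$. Then $\overline{R}=R/\mathcal{P}(\mathfrak{m})$ is of the same type with maximal ideal $\overline{\mathfrak{m}}$, and by the identification $\ct_{\mathfrak{m}}(\mathfrak{a})=\fpt(\mathfrak{a})$ recorded just after the definition of the Cartier threshold, the general inequality specializes to $\fpt(\mathfrak{a})\leq\fpt(\overline{\mathfrak{a}})$.

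There is essentially no hard step once the preparatory lemmas are in place: the substantive content lives inside Lemma \ref{lema4}, and after that the argument is a routine witness-to-non-containment transfer. The only point worth flagging is the need for $\overline{R}$ to be $F$-pure so that $\ct_{\overline{J}}(\overline{\mathfrak{a}})$ makes sense, which is why the uniform $F$-compatibility of the Cartier core (Lemma \ref{lema6}) is indispensable background.
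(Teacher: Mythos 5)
Your proof is correct and follows essentially the same route as the paper: both rely on Lemma \ref{lema4} (you via its contrapositive, the paper implicitly) to establish $b_{\mathfrak{a}}^{J}(p^e)\leq b_{\overline{\mathfrak{a}}}^{\overline{J}}(p^e)$ and then pass to the limit. Your additional observation that $\overline{R}$ is $F$-pure thanks to the uniform $F$-compatibility of $\mathcal{P}(J)$ is a worthwhile explicitness that the paper leaves implicit.
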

\begin{proof}
From Lemma \ref{lema4}, we have that
\begin{align*}
b_{\mathfrak{a}}^J(p^e)&=\max\{t \in \NN\;|\;\mathfrak{a}^t \not \subseteq J_e\} \\
&\leq \max\{t \in \NN\;|\;\overline{\mathfrak{a}}^t \not \subseteq (\overline{J})_e\} \\
&=b_{\overline{\mathfrak{a}}}^{\overline{J}}(p^e).
\end{align*}
Therefore, $\ct_J(\mathfrak{a})=\lim\limits_{e \rightarrow \infty} {\frac{b_{\mathfrak{a}}^J(p^e)}{p^{e}}} \leq \lim\limits_{e \rightarrow \infty} {\frac{b_{\overline{\mathfrak{a}}}^{\overline{J}}(p^e)}{p^{e}}}=\ct_{\overline{J}}(\overline{\mathfrak{a}})$.
\end{proof}

%---------------------------------------------------------------------------------

\subsection{Relation Between $c^J(\mathfrak{a})$ and $\ct _J(\mathfrak{a})$}
%-------------------------------------------------------------------------

In this subsection we give a characterization of $\ct_J(\mathfrak{a})$ using $F$-thresholds.
 
\begin{remark} \label{pro4} 
Suppose that $R$ is an $F$-finite $F$-pure ring. Let $\mathfrak{a}$, $J$ be two ideals in $R$ such that $\mathfrak{a} \subseteq \sqrt{J}$. Since $J^{[p^e]} \subseteq J_e$, we have that
\begin{align*}
b_{\mathfrak{a}}^J(p^e)&=\max\{t \in \NN\;|\;\mathfrak{a}^t \not \subseteq J_e\} \\
&\leq \max\{t \in \NN\;|\;\mathfrak{a}^t \not \subseteq J^{[p^e]}\} \\
&=\nu^{J}_{\mathfrak{a}}(p^{e}).
\end{align*}
Therefore, $\ct_J(\mathfrak{a})\leq c^{J}(\mathfrak{a})$. 
\end{remark}

The following Remark relates $F$-pure rings and Frobenius powers.

\begin{remark}\label{increasing}
Suppose that $R$ is an $F$-pure ring. Let $J$ be an ideal in $R$ and $r \in R$. Then, $r^{p}\in J^{[p]}$ if and only if $r \in J$ \cite{Fedder'scriterion}. Let $\mathfrak{a}$ be an ideals in $R$ such that $\mathfrak{a} \subseteq \sqrt{J}$. Then, we have that $\fa^{\nu_{\fa}^{J}(p^{e})}  \nsubseteq J^{[p^{e}]}$.  As a consequence, $(\fa^{\nu_{\fa}^{J}(p^{e})})^{[p]}  \nsubseteq J^{[p^{e+1}]}$. Hence, $\fa^{p \cdot \nu_{\fa}^{J}(p^{e})}  \nsubseteq J^{[p^{e+1}]}$, and so, $p \cdot \nu_{\fa}^{J}(p^{e})\leq \nu_{\fa}^{J}(p^{e+1})$. Therefore, the sequence $\left\lbrace \frac{\nu_{\fa}^{J}(p^{e})}{p^{e}} \right\rbrace_{e \geq 0}$ is non-decreasing. 
\end{remark}  

The following propositions are an extension of the work done by De Stefani, N\'u\~nez-Betancourt and P\'erez \cite[Theorem $4.6$]{DSNBP}.

\begin{proposition}
Let $R$ be an $F$-finite $F$-pure ring. Let $J$ be an ideal in $R$. Then, $J_e^{[p]} \subseteq J_{e+1}$ for every $e \in \mathbb{N}$.
\end{proposition}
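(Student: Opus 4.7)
The plan is to unwind the definition of $J_{e+1}$ and reduce to the defining property of $J_e$ via the identification $(f^p)^{1/p^{e+1}} = f^{1/p^e}$. Since $J_e^{[p]}$ is the ideal generated by $\{f^p : f \in J_e\}$, it suffices to prove $f^p \in J_{e+1}$ for every $f \in J_e$.

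Fix $f \in J_e$ and let $\varphi \in \Hom_R(R^{1/p^{e+1}},R)$ be arbitrary. The key observation is that the chain $R \subseteq R^{1/p^e} \subseteq R^{1/p^{e+1}}$ gives $(f^p)^{1/p^{e+1}} = f^{1/p^e}$, where the right-hand side is regarded as an element of $R^{1/p^e} \subseteq R^{1/p^{e+1}}$. Restricting $\varphi$ to the subring $R^{1/p^e}$ produces an element $\psi := \varphi|_{R^{1/p^e}} \in \Hom_R(R^{1/p^e}, R)$.

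Then I would compute
\begin{align*}
\varphi\bigl((f^p)^{1/p^{e+1}}\bigr) \;=\; \varphi\bigl(f^{1/p^e}\bigr) \;=\; \psi\bigl(f^{1/p^e}\bigr),
\end{align*}
and by the hypothesis that $f \in J_e$ together with Definition \ref{defJ_e}, this lies in $J$. Since $\varphi$ was arbitrary, $f^p \in J_{e+1}$ by definition of $J_{e+1}$. Because $J_e^{[p]}$ is generated as an $R$-ideal by elements of the form $f^p$ with $f \in J_e$ and $J_{e+1}$ is an ideal, the inclusion $J_e^{[p]} \subseteq J_{e+1}$ follows.

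There is no real obstacle here: the whole argument is a formal manipulation once one observes that taking a $p$-th power before extracting a $p^{e+1}$-th root is the same as extracting a $p^e$-th root, and that any $R$-linear map out of $R^{1/p^{e+1}}$ restricts to an $R$-linear map out of $R^{1/p^e}$. The only place to be mildly careful is to confirm that the restriction $\varphi|_{R^{1/p^e}}$ genuinely lies in $\Hom_R(R^{1/p^e}, R)$, which follows because $R$-linearity is preserved under restriction to a subring containing $R$.
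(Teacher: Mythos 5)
Your argument is correct and follows exactly the same route as the paper: identify $(f^p)^{1/p^{e+1}}$ with $f^{1/p^e}$ inside $R^{1/p^{e+1}}$, restrict an arbitrary $\varphi \in \Hom_R(R^{1/p^{e+1}},R)$ to $R^{1/p^e}$, and invoke $f \in J_e$. Nothing to add.
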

\begin{proof}
Let $f$ be an element in $J_e$. Let $\varphi \in \Hom_R(R^{1/p^{e+1}},R)$. As $R^{1/p^e} \subseteq R^{1/p^{e+1}}$, we have that $\varphi|_{R^{1/p^e}} \in \Hom_R(R^{1/p^e},R)$. Thus, $\varphi((f^p)^{1/p^{e+1}})=\varphi|_{R^{1/p^e}}(f^{1/p^e}) \in J$. Hence, $f^p \in J_{e+1}$, and so, $J_e^{[p]} \subseteq J_{e+1}$.  
\end{proof}
\begin{proposition}
Let $R$ be an $F$-finite $F$-pure ring, and $\mathfrak{a}$, $J$ be two ideals in $R$ such that $\mathfrak{a} \subseteq \sqrt{J}$. The sequence $ \left\lbrace \frac{c^{J_e}(\mathfrak{a})}{p^e} \right\rbrace_{e \geq 0}$ is non-increasing and bounded below by zero. In particular, its limit exists. 
\end{proposition}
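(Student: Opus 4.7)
The plan is to combine the just-established containment $J_e^{[p]} \subseteq J_{e+1}$ with the two scaling properties of $F$-thresholds recorded in Proposition \ref{cpFthreshold}. Before invoking these, I would first check that each term $c^{J_e}(\mathfrak{a})$ is even defined: by Proposition \ref{containJ_e}, $J^{[p^e]} \subseteq J_e \subseteq J$, so taking radicals yields $\sqrt{J_e} = \sqrt{J}$, hence the assumption $\mathfrak{a} \subseteq \sqrt{J}$ gives $\mathfrak{a} \subseteq \sqrt{J_e}$ and the $F$-threshold $c^{J_e}(\mathfrak{a})$ exists.

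Boundedness below is immediate, since $\nu^{J_e}_{\mathfrak{a}}(p^{e'}) \geq 0$ for every $e'$, whence $c^{J_e}(\mathfrak{a})/p^e \geq 0$.

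For the monotonicity step, from the containment $J_e^{[p]} \subseteq J_{e+1}$ and Proposition \ref{cpFthreshold}(1), I obtain
\begin{equation*}
c^{J_{e+1}}(\mathfrak{a}) \leq c^{J_e^{[p]}}(\mathfrak{a}).
\end{equation*}
Applying Proposition \ref{cpFthreshold}(2) to the right-hand side gives $c^{J_e^{[p]}}(\mathfrak{a}) = p \cdot c^{J_e}(\mathfrak{a})$, so that $c^{J_{e+1}}(\mathfrak{a}) \leq p \cdot c^{J_e}(\mathfrak{a})$. Dividing both sides by $p^{e+1}$ yields
\begin{equation*}
\frac{c^{J_{e+1}}(\mathfrak{a})}{p^{e+1}} \leq \frac{c^{J_e}(\mathfrak{a})}{p^e},
\end{equation*}
which is exactly the non-increasing property. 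A monotone bounded sequence converges, so the limit exists.

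The argument is essentially a two-line deduction from the preceding proposition and Proposition \ref{cpFthreshold}; there is no real obstacle. The only subtlety worth flagging is the well-definedness check for $c^{J_e}(\mathfrak{a})$, which is why verifying $\sqrt{J_e} = \sqrt{J}$ belongs at the very start of the proof.
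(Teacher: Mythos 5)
Your proof is correct and follows exactly the same route as the paper: combine $J_e^{[p]} \subseteq J_{e+1}$ with both parts of Proposition \ref{cpFthreshold} to get $c^{J_{e+1}}(\mathfrak{a}) \le p\,c^{J_e}(\mathfrak{a})$, then divide by $p^{e+1}$. The extra remarks on well-definedness (via $\sqrt{J_e}=\sqrt{J}$) and nonnegativity are sound and just make explicit what the paper leaves implicit.
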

\begin{proof}
Let $e$ be nonnegative integer, $J_e^{[p]} \subseteq J_{e+1}$. Thus, $c^{J_{e+1}}(\mathfrak{a}) \leq c^{J_e^{[p]}}(\mathfrak{a})=p \cdot c^{J_e}(\mathfrak{a})$ by Proposition \ref{cpFthreshold}. Therefore, $\frac{c^{J_{e+1}}(\mathfrak{a})}{p^{e+1}} \leq \frac{c^{J_e}(\mathfrak{a})}{p^e}$.   
\end{proof}

The following proposition gives us a relation between the Cartier thresholds and $F$-thresholds. Specifically, we can obtain the Cartier threshold as a limit $F$-thresholds.
\begin{proposition} \label{teo7}
Let $R$ be an $F$-finite $F$-pure ring. Let $\mathfrak{a}$, $J$ be two ideals in $R$ such that $\mathfrak{a} \subseteq \sqrt{J}$. Then, $\ct_J(\mathfrak{a})=\lim\limits_{e \rightarrow \infty} {\frac{c^{J_e}(\mathfrak{a})}{p^e}}$. 
\end{proposition}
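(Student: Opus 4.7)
The plan is to sandwich $b_{\mathfrak{a}}^{J}(p^{e})$ between $c^{J_e}(\mathfrak{a})-\mu(\mathfrak{a})$ and $c^{J_e}(\mathfrak{a})$, then divide by $p^e$ and pass to the limit. The key observation driving the argument is the trivial identity
\begin{align*}
\nu_{\mathfrak{a}}^{J_e}(1)\;=\;\max\{m\in\mathbb{N}\mid \mathfrak{a}^m\not\subseteq (J_e)^{[1]}\}\;=\;\max\{m\in\mathbb{N}\mid \mathfrak{a}^m\not\subseteq J_e\}\;=\;b_{\mathfrak{a}}^{J}(p^e),
\end{align*}
which hooks the invariant $b_{\mathfrak{a}}^{J}$ directly onto the $F$-threshold machinery applied to the ideal $J_e$.

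First I would check that $c^{J_e}(\mathfrak{a})$ is even defined: by Proposition \ref{containJ_e}, $J^{[p^e]}\subseteq J_e\subseteq J$, so $\sqrt{J_e}=\sqrt{J}$ and the hypothesis $\mathfrak{a}\subseteq\sqrt{J}$ guarantees $\mathfrak{a}\subseteq\sqrt{J_e}$. Then I would apply Lemma \ref{lem4} to the ideal $J_e$ with $e_1=0$ and arbitrary $e_2$, obtaining
\begin{align*}
\frac{\nu_{\mathfrak{a}}^{J_e}(p^{e_2})}{p^{e_2}}-\nu_{\mathfrak{a}}^{J_e}(1)\;\leq\;\mu(\mathfrak{a}).
\end{align*}
Letting $e_2\to\infty$ and substituting the identity above yields the upper bound
$c^{J_e}(\mathfrak{a})-b_{\mathfrak{a}}^{J}(p^e)\leq \mu(\mathfrak{a})$.

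For the matching lower bound $b_{\mathfrak{a}}^{J}(p^e)\leq c^{J_e}(\mathfrak{a})$, I would invoke Remark \ref{increasing}, which uses $F$-purity of $R$ to show that the sequence $\{\nu_{\mathfrak{a}}^{J_e}(p^{e_2})/p^{e_2}\}_{e_2\geq 0}$ is non-decreasing, hence bounded above by its limit $c^{J_e}(\mathfrak{a})$; specializing to $e_2=0$ gives $b_{\mathfrak{a}}^{J}(p^e)=\nu_{\mathfrak{a}}^{J_e}(1)\leq c^{J_e}(\mathfrak{a})$. Combining the two bounds produces the sandwich
\begin{align*}
\frac{c^{J_e}(\mathfrak{a})}{p^e}-\frac{\mu(\mathfrak{a})}{p^e}\;\leq\;\frac{b_{\mathfrak{a}}^{J}(p^e)}{p^e}\;\leq\;\frac{c^{J_e}(\mathfrak{a})}{p^e}.
\end{align*}
Since $\mathfrak{a}$ is finitely generated, $\mu(\mathfrak{a})/p^e\to 0$, and both sides converge to the same limit as $e\to\infty$, proving $\ct_J(\mathfrak{a})=\lim_{e\to\infty}c^{J_e}(\mathfrak{a})/p^e$.

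I do not anticipate a serious obstacle here: the proposition immediately preceding guarantees that the right-hand limit exists, and the existence of $\ct_J(\mathfrak{a})$ is part of its definition. The only real idea is that the two natural one-parameter families $\{b_{\mathfrak{a}}^{J}(p^e)\}$ and $\{\nu_{\mathfrak{a}}^{J_e}(p^{e'})\}$ meet exactly on the diagonal $e'=0$, so the single application of Lemma \ref{lem4} (at $e_1=0$) combined with the monotonicity from $F$-purity suffices; no further comparison between $J_e^{[p^{e'}]}$ and $J_{e+e'}$ is needed.
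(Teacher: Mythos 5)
Your argument is correct and follows essentially the same route as the paper's proof: identify $b_{\mathfrak{a}}^{J}(p^e)=\nu_{\mathfrak{a}}^{J_e}(p^0)$, apply Lemma \ref{lem4} with $e_1=0$ for the upper bound, use the monotonicity from Remark \ref{increasing} for the lower bound, then sandwich, divide by $p^e$, and pass to the limit. The preliminary check that $\mathfrak{a}\subseteq\sqrt{J_e}$ (so that $c^{J_e}(\mathfrak{a})$ is well-defined) is a nice touch the paper leaves implicit, but it does not change the argument.
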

\begin{proof}
Let $e$ be nonnegative integer. We note that 
\begin{align*}
b_{\mathfrak{a}}^J(p^e)&=\max\{t\in \mathbb{N}\;|\; \mathfrak{a} \not \subseteq J_e\}\\
&=\max\{t\in \mathbb{N}\;|\; \mathfrak{a} \not \subseteq J_e^{[p^0]}\}\\
&=\nu^{J_e}_{\mathfrak{a}}(p^0).
\end{align*}
For every nonnegative integer $s$, we have
\begin{align*}
\frac{\nu^{J_e}_{\mathfrak{a}}(p^s)}{p^s}-\frac{\nu^{J_e}_{\mathfrak{a}}(p^0)}{p^0} \leq \frac{\mu(\mathfrak{a})}{p^0}
\end{align*}
by Lemma \ref{lem4}.

The sequence $ \left\lbrace \frac{\nu_{\mathfrak{a}}^{J_e}(p^s)}{p^s} \right\rbrace_{s \geq 0}$ is non-decreasing by Remark \ref{increasing}. As a consequence,    
\begin{align*}
0 \leq \frac{\nu^{J_e}_{\mathfrak{a}}(p^s)}{p^s}-\nu^{J_e}_{\mathfrak{a}}(p^0) \leq \mu(\mathfrak{a}).
\end{align*}
Thus,
\begin{align*}
0 \leq \frac{\nu^{J_e}_{\mathfrak{a}}(p^s)}{p^s}-b_{\mathfrak{a}}^J(p^e) \leq \mu(\mathfrak{a}).
\end{align*}
We take limit over $s$ to get
\begin{align*}
0 \leq c^{J_e}(\mathfrak{a})-b_{\mathfrak{a}}^J(p^e) \leq \mu(\mathfrak{a}),
\end{align*}
dividing by $p^e$ gives
\begin{align*}
0 \leq \frac{c^{J_e}(\mathfrak{a})}{p^e}-\frac{b_{\mathfrak{a}}^J(p^e)}{p^e} \leq \frac{\mu(\mathfrak{a})}{p^e}.
\end{align*}
Taking limit over $e$ we conclude that
\begin{align*}
 \ct_J(\mathfrak{a})=\lim\limits_{e \rightarrow \infty} {\frac{c^{J_e}(\mathfrak{a})}{p^e}}.
\end{align*}
\end{proof}
\begin{corollary}
Let $R$ be an $F$-finite $F$-pure ring. Let $\mathfrak{a}$, $J$ be two ideals in $R$ such that $\mathfrak{a} \subseteq \sqrt{J}$. Then, $\ct_J(\mathfrak{a})=c^{J}(\mathfrak{a})$ if and only if $c^{J_e}(\mathfrak{a})=c^{J^{[p^e]}}(\mathfrak{a})$ for every $e \in \mathbb{N}$. 
\end{corollary}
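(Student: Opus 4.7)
The plan is to combine Proposition~\ref{teo7} with Proposition~\ref{cpFthreshold} and the fact that the sequence $\left\{\frac{c^{J_e}(\mathfrak{a})}{p^e}\right\}_{e \geq 0}$ is non-increasing, which has just been established. First I would iterate Proposition~\ref{cpFthreshold}(2) to obtain $c^{J^{[p^e]}}(\mathfrak{a}) = p^e \cdot c^{J}(\mathfrak{a})$ for every $e \in \mathbb{N}$, so that the condition $c^{J_e}(\mathfrak{a}) = c^{J^{[p^e]}}(\mathfrak{a})$ rewrites as $\frac{c^{J_e}(\mathfrak{a})}{p^e} = c^{J}(\mathfrak{a})$.

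For the backward direction, if $c^{J_e}(\mathfrak{a}) = c^{J^{[p^e]}}(\mathfrak{a})$ for every $e$, then $\frac{c^{J_e}(\mathfrak{a})}{p^e}$ is the constant sequence $c^{J}(\mathfrak{a})$. By Proposition~\ref{teo7}, its limit is $\ct_J(\mathfrak{a})$, so $\ct_J(\mathfrak{a}) = c^{J}(\mathfrak{a})$.

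For the forward direction, the key observation is the two-sided bound on $\frac{c^{J_e}(\mathfrak{a})}{p^e}$. By Proposition~\ref{containJ_e} we have $J^{[p^e]} \subseteq J_e$, so Proposition~\ref{cpFthreshold}(1) yields $c^{J_e}(\mathfrak{a}) \leq c^{J^{[p^e]}}(\mathfrak{a}) = p^e \cdot c^{J}(\mathfrak{a})$, i.e.\ $\frac{c^{J_e}(\mathfrak{a})}{p^e} \leq c^{J}(\mathfrak{a})$ for every $e$. Since the sequence is non-increasing and converges to $\ct_J(\mathfrak{a}) = c^{J}(\mathfrak{a})$ by hypothesis and Proposition~\ref{teo7}, a non-increasing sequence bounded above by its limit must be constant equal to that limit. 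Hence $\frac{c^{J_e}(\mathfrak{a})}{p^e} = c^{J}(\mathfrak{a})$ for every $e$, which by the rewriting in the first paragraph is exactly $c^{J_e}(\mathfrak{a}) = c^{J^{[p^e]}}(\mathfrak{a})$.

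There is no real obstacle here; the statement is essentially a bookkeeping consequence of the monotonicity of $\frac{c^{J_e}(\mathfrak{a})}{p^e}$ together with the limit formula of Proposition~\ref{teo7}. The only subtle point to be careful about is invoking monotonicity in the right direction: since $\{c^{J^{[p^e]}}(\mathfrak{a})/p^e\}$ is the constant upper bound and $\{c^{J_e}(\mathfrak{a})/p^e\}$ decreases to the same value $c^J(\mathfrak{a})$, equality of the two sequences at every index is forced termwise, not only in the limit.
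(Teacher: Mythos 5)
Your proof is correct and follows essentially the same approach as the paper: both directions rest on Proposition~\ref{teo7}, the monotonicity of $\left\{ c^{J_e}(\mathfrak{a})/p^e \right\}_{e\geq 0}$ established just before, and the identity $c^{J^{[p^e]}}(\mathfrak{a}) = p^e\, c^J(\mathfrak{a})$. The only cosmetic difference is that in the forward direction you make the upper bound $c^{J_e}(\mathfrak{a})/p^e \leq c^J(\mathfrak{a})$ (from $J^{[p^e]} \subseteq J_e$) explicit and combine it with the infimum bound, whereas the paper treats that inequality as already known and proves only the reverse; the content is identical.
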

\begin{proof}
We focus on the first direction, it suffices to show $c^{J^{[p^e]}}(\mathfrak{a}) \leq c^{J_e}(\mathfrak{a})$. As the sequence $ \left\lbrace \frac{c^{J_e}(\mathfrak{a})}{p^e} \right\rbrace_{e\geq 0}$ is non-increasing and bounded below, it converges to its infimum. By Proposition \ref{teo7}, $c^{J}(\mathfrak{a}) \leq \frac{c^{J_e}(\mathfrak{a})}{p^e}$. As a consequence, $c^{J^{[p^e]}}(\mathfrak{a})= p^e \cdot c^{J}(\mathfrak{a}) \leq c^{J_e}(\mathfrak{a})$.

We now show the other direction, $\ct_J(\mathfrak{a})=\lim\limits_{e \rightarrow \infty} {\frac{c^{J_e}(\mathfrak{a})}{p^e}}=\lim\limits_{e \rightarrow \infty} {\frac{c^{J^{[p^e]}}(\mathfrak{a})}{p^e}}=\lim\limits_{e \rightarrow \infty} {\frac{p^e \cdot c^{J}(\mathfrak{a})}{p^e}}= c^{J}(\mathfrak{a})$.   
\end{proof}

%----------------------------------------------------------------------
\subsection{Cartier Thresholds in Stanley-Reisner Rings}
Throughout this subsection, we denote $S=K[x_1,  \ldots ,x_n]$ with $K$ an $F$-finite field of prime characteristic $p$. Let $I$ be a squarefree monomial ideal of $S$, $R=S/I$, and $\fp_1, \ldots,\fp_l$ are the minimal prime ideals of $R$. 

%\textcolor{red}{We take $A$ as in Remark \ref{remarkdimension} and $B=A/IA$. Given $\fq$ a prime monomial ideal in $B$, we consider the ring $\overline{B}=B/\mathcal{P}(\fq)$. Let $\mathfrak{a}$ be a ideal in $B$ such that $\mathfrak{a} \subseteq \fq$. Our goal is to compare the Cartier threshold of $\mathfrak{a}$ with respect to $\fq$ versus the Cartier threshold of $\overline{\mathfrak{a}}$ with respect to $\overline{\fq}$, and the $F$-threshold of $\overline{\mathfrak{a}}$ with respect to $\overline{\fq}$.}

\begin{theorem} \label{pro1series} 
Suppose $A$ as in Remark \ref{remarkdimension} and $B=A/IA$. Let $\mathfrak{a}$, $\fq$ be two ideals in $B$ with $\fq$ a prime monomial ideal, such that $\mathfrak{a} \subseteq \fq$, and $\overline{B}=B/{\mathcal{P}(\fq)}$. Then, the following hold: 
\begin{itemize}
\item[(1)]$\ct_\fq(\mathfrak{a})=\ct_{\overline{\fq}}(\overline{\mathfrak{a}})$;
\item[(2)] $\ct_\fq(\mathfrak{a})=c^{\overline{\fq}}(\overline{\mathfrak{a}})$;
\item[(3)] $\ct_\fq(\mathfrak{a})$ is a rational number.  
\end{itemize}

In particular, $\fpt(\mathfrak{a})$ is a rational number.
\end{theorem}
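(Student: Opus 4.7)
The plan is to prove $(2)$ first as a direct consequence of Proposition~\ref{proderiveQ_eseries}$(2)$, deduce $(1)$ by sandwiching with two inequalities already established in the paper, and obtain $(3)$ by recognizing that $\overline{B}$ is once again a complete Stanley-Reisner ring, so Remark~\ref{remarkseries} applies.

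For $(2)$, the key input is the equivalence $f \in \fq_{e} \Leftrightarrow \overline{f} \in \overline{\fq}^{[p^{e}]}$ from Proposition~\ref{proderiveQ_eseries}$(2)$. Since this characterization is elementwise, it lifts to arbitrary ideals, giving
\begin{align*}
\mathfrak{a}^{t} \subseteq \fq_{e} \;\Longleftrightarrow\; \overline{\mathfrak{a}}^{t} \subseteq \overline{\fq}^{[p^{e}]}
\end{align*}
for every $t$ and $e$. Consequently $b_{\mathfrak{a}}^{\fq}(p^{e}) = \nu^{\overline{\fq}}_{\overline{\mathfrak{a}}}(p^{e})$, and dividing by $p^{e}$ and passing to the limit yields $\ct_\fq(\mathfrak{a}) = c^{\overline{\fq}}(\overline{\mathfrak{a}})$.

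For $(1)$, Proposition~\ref{tore4} applied to $(B,\mathfrak{a},\fq)$ gives $\ct_\fq(\mathfrak{a}) \leq \ct_{\overline{\fq}}(\overline{\mathfrak{a}})$, while Remark~\ref{pro4} applied inside the $F$-finite $F$-pure ring $\overline{B}$ gives $\ct_{\overline{\fq}}(\overline{\mathfrak{a}}) \leq c^{\overline{\fq}}(\overline{\mathfrak{a}})$. Chained with the equality from $(2)$, these force the three quantities $\ct_\fq(\mathfrak{a})$, $\ct_{\overline{\fq}}(\overline{\mathfrak{a}})$, and $c^{\overline{\fq}}(\overline{\mathfrak{a}})$ to coincide, proving $(1)$.

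For $(3)$, I would observe that $\overline{B} = A/(IA + \mathcal{P}(\fq))$. By Lemma~\ref{monomial} the Cartier core $\mathcal{P}(\fq)$ is a monomial ideal, and by Remark~\ref{remarkCartierRadical} it is radical; hence $IA + \mathcal{P}(\fq)$ is a squarefree monomial ideal in the complete regular local ring $A$, so $\overline{B}$ has exactly the form covered by Remark~\ref{remarkseries}, with $\overline{\fq}$ monomial as the image of $\fq$. Applying Remark~\ref{remarkseries} to $(\overline{B},\overline{\mathfrak{a}},\overline{\fq})$ gives $c^{\overline{\fq}}(\overline{\mathfrak{a}}) \in \mathbb{Q}$, so $\ct_\fq(\mathfrak{a}) \in \mathbb{Q}$ by $(2)$. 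The ``in particular'' statement is the specialization to $\fq = \mathfrak{m}$, the monomial maximal ideal of $B$, where $\ct_\mathfrak{m}(\mathfrak{a}) = \fpt(\mathfrak{a})$ by definition. The main conceptual point, rather than a single technical obstacle, is recognizing that $\overline{B}$ remains a complete Stanley-Reisner ring after modding out by the Cartier core; once this is granted, the rationality of the $F$-threshold flows back through the equality in $(2)$ to give rationality of the Cartier threshold.
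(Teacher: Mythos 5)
Your proof is correct, and it reorders the argument relative to the paper in a way worth noting. You prove Part $(2)$ first, using the \emph{full} biconditional of Proposition~\ref{proderiveQ_eseries}$(2)$ to obtain $b_{\fa}^{\fq}(p^e) = \nu_{\overline{\fa}}^{\overline{\fq}}(p^e)$ on the nose, and then recover Part $(1)$ by sandwiching: $\ct_\fq(\fa) \leq \ct_{\overline\fq}(\overline\fa)$ from Proposition~\ref{tore4}, $\ct_{\overline\fq}(\overline\fa) \leq c^{\overline\fq}(\overline\fa)$ from Remark~\ref{pro4}, and $c^{\overline\fq}(\overline\fa) = \ct_\fq(\fa)$ from your Part $(2)$. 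The paper goes the other way around: it proves Part $(1)$ directly by combining Proposition~\ref{proderiveQ_eseries}$(1)$ with Lemma~\ref{lema4} to get $f\in \fq_e \Leftrightarrow \overline f \in (\overline\fq)_e$, hence $b_{\fa}^\fq(p^e)=b_{\overline\fa}^{\overline\fq}(p^e)$, and only then derives Part $(2)$ from the forward direction of the biconditional together with the already-established Part $(1)$ and Remark~\ref{pro4}. Both arguments pivot on the same underlying facts about how $\fq_e$ passes to $\overline B$, but your treatment of Part $(2)$ is a bit more economical, since the biconditional yields equality of the counting functions in a single step, whereas the paper realizes that equality only by chaining three inequalities; in exchange, your Part $(1)$ invokes Proposition~\ref{tore4}, which the paper's proof of Part $(1)$ avoids by using the contraction statement Lemma~\ref{lema4} directly. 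Your handling of Part $(3)$ (identifying $\overline{B}$ as a power series ring modulo a squarefree monomial ideal, via Lemma~\ref{monomial} and Remark~\ref{remarkCartierRadical}, so that Remark~\ref{remarkseries} applies) and the specialization to $\fq = \m$ for the ``in particular'' clause both match the paper's argument.
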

\begin{proof}
We show Part $(1)$. From Proposition \ref{proderiveQ_eseries} and Lemma \ref{lema4}, we have
\begin{align*}
b_{\mathfrak{a}}^\fq(p^e)&=\max\{t\in \mathbb{N} \;|\;\mathfrak{a}^t \not\subseteq \fq_e\} \\
&=\max\{t\in \mathbb{N} \;|\;\overline{\mathfrak{a}}^t \not\subseteq (\overline{\fq})_e\} \\
&= b_{\overline{\mathfrak{a}}}^{\overline{\fq}}(p^e).
\end{align*}
Therefore, $\ct_\fq(\mathfrak{a})=\ct_{\overline{\fq}}(\overline{\mathfrak{a}})$.

Now, we show Part $(2)$. We claim that $c^{\overline{\fq}}(\overline{\mathfrak{a}}) \leq \ct_\fq(\mathfrak{a})$. From Proposition \ref{proderiveQ_eseries}, it follows that
\begin{align*}
\nu_{\overline{\mathfrak{a}}}^{\overline{\fq}}(p^e)&=\max\{t\in \mathbb{N} \;|\;\overline{\mathfrak{a}}^t \not\subseteq \overline{\fq}^{[q]}\} \\
&\leq \max\{t\in \mathbb{N} \;|\;\mathfrak{a}^t \not\subseteq \fq_e\} \\
&=b_{\mathfrak{a}}^\fq(p^e).
\end{align*}

Thus, $c^{\overline{\fq}}(\overline{\mathfrak{a}})=\lim\limits_{e \rightarrow \infty} {\frac{\nu_{\overline{\mathfrak{a}}}^{\overline{\fq}}(p^e)}{p^{e}}} \leq \lim\limits_{e \rightarrow \infty} {\frac{b_{\mathfrak{a}}^{\fq}(p^e)}{p^{e}}}=\ct_\fq(\mathfrak{a})$.

By Part $(1)$ and Remark \ref{pro4}, we have $c^{\overline{\fq}}(\overline{\mathfrak{a}}) \leq \ct_\fq(\mathfrak{a})=\ct_{\overline{\fq}}(\overline{\mathfrak{a}}) \leq c^{\overline{\fq}}(\overline{\mathfrak{a}})$. Therefore, $c^{\overline{\fq}}(\overline{\mathfrak{a}})=\ct_\fq(\mathfrak{a})$.
 
We show Part $(3)$. Since $\fq$ is a monomial ideal, ${\mathcal{P}(\fq)}$ is also a monomial ideal by Lemma \ref{monomial}. In addition, ${\mathcal{P}(\fq)}$ is a radical ideal by Remark \ref{remarkCartierRadical}. Thus, ${\mathcal{P}(\fq)}$ is squarefree monomial ideal. Consequently, $\overline{B}$ is a power series ring modulo a squarefree monomial ideal. Since  $\overline{\fq}$ is a monomial ideal in $\overline{B}$, $c^{\overline{\fq}}(\overline{\fa})$ is a rational number by Remark \ref{remarkseries}. Therefore, $\ct_\fq(\mathfrak{a})$ is a rational number by Part $(2)$. 

The last statement follows, since $\ct_\m(\mathfrak{a})=\fpt(\fa)$ and $\m$ is a monomial prime ideal in $B$.
\end{proof}

Since $ \ct_J(\mathfrak{a})$ is preserved under localization and completion, Theorem \ref{pro1series} allows us to obtain one of the main results of this work.  

%\begin{corollary} \label{pro2series}
%Let $\mathfrak{a},\;\fq$ be two ideals of $R$, where $\fq$ is a prime ideal and $\mathfrak{a}\subseteq \fq$. Then, $\ct_\fq(\mathfrak{a})$ is a rational number.  
%\end{corollary}
%\begin{proof}
%We have that $\ct_\fq(\mathfrak{a})=\fpt(\mathfrak{a}\widehat{R_\fq})$ by Propositions \ref{teo9} and \ref{teo10}. Therefore, $\ct_\fq(\mathfrak{a})$ is a rational number by Theorem \ref{pro1series}.  
%\end{proof}

\begin{corollary}\label{mainresult}
Let $\mathfrak{a}$, $J$ be two ideals in $R$ with $J$ radical ideal, such that $\mathfrak{a} \subseteq J$. Then, $\ct_J(\mathfrak{a})$ is a rational number. In particular, $\fpt(\fa)$ is a rational number. 
\end{corollary}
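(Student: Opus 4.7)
The plan is to reduce Corollary~\ref{mainresult} to Theorem~\ref{pro1series} by chaining together the intersection formula for $\ct$, its preservation under localization, and its preservation under completion.

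First, since $J$ is a radical ideal in the Noetherian ring $R$, I would write $J=\bigcap_{i=1}^m\fq_i$ as the intersection of its finitely many minimal primes. Proposition~\ref{teo6} then gives
\[
\ct_J(\fa)=\sup_{1\le i\le m}\ct_{\fq_i}(\fa),
\]
which, being a supremum over a finite set, is actually a maximum. Hence it suffices to prove $\ct_{\fq_i}(\fa)\in\QQ$ for each fixed minimal prime $\fq=\fq_i$ of $J$.

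Next, I would invoke Propositions~\ref{teo9} and~\ref{teo10} in succession to translate the problem into the completion of a localization:
\[
\ct_\fq(\fa)=\fpt(\fa R_\fq)=\fpt\bigl(\fa\,\widehat{R_\fq}\bigr).
\]
Identify $\widehat{R_\fq}$ with the ring $B=A/IA$ from Remark~\ref{remarkdimension}, where $A\cong L[[x_1,\dots,x_u,y_1,\dots,y_t]]$ and $IA$ is squarefree monomial in the $x_i$. The maximal ideal $\m$ of $B$ is then the image of $(x_1,\dots,x_u,y_1,\dots,y_t)$, hence a monomial prime ideal, and $\fa B\subseteq\m$ because $\fa\subseteq\fq$. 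The ``in particular'' clause of Theorem~\ref{pro1series}, applied with this monomial prime $\m$, yields $\fpt(\fa B)\in\QQ$, which is the desired rationality.

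The secondary statement about $\fpt(\fa)$ in a local or standard graded Stanley-Reisner ring comes for free, since $\fpt(\fa)=\ct_{\m}(\fa)$ and the maximal ideal $\m$ is automatically radical, so the same argument applies with $J=\m$. I anticipate no serious obstacle: the substantive work was done in Theorem~\ref{pro1series}, whose hypotheses (complete local ring, monomial prime target, regular quotient by the Cartier core) were tailored precisely to absorb this reduction. The only bookkeeping point is to verify that the maximal ideal of the completion $\widehat{R_\fq}\cong B$ really is a monomial prime in the variables of $B$, which is immediate from the Cohen structure description in Remark~\ref{remarkdimension}.
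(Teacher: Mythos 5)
Your proposal follows the paper's proof almost exactly: decompose $J$ into minimal primes and apply Proposition~\ref{teo6}, then for each prime pass to the completed localization via Propositions~\ref{teo9} and~\ref{teo10}, and invoke Theorem~\ref{pro1series} with the monomial maximal ideal of $B\cong\widehat{R_\fq}$. The only cosmetic differences are that the paper states the localization/completion step before the intersection decomposition, and you spell out slightly more explicitly that the maximal ideal of $B$ is a monomial prime (which the paper leaves implicit in its citation of Theorem~\ref{pro1series}).
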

\begin{proof}
Let $\fq \subseteq R$ be a prime ideal such that $\fa \subseteq \fq$. We have that $\ct_\fq(\mathfrak{a})=\fpt(\mathfrak{a}\widehat{R_\fq})$ by Propositions \ref{teo9} and \ref{teo10}. Thus, $\ct_\fq(\mathfrak{a})$ is a rational number by Theorem \ref{pro1series}.

Since $J$ is a radical ideal, we have that $J=\bigcap_{i=1}^m \fq_i$ where $\fq_1, \ldots, \fq_m$ are the minimal prime ideals of $J$. From Proposition \ref{teo6}, $\ct_J(\mathfrak{a})=\max\{\ct_{\fq_i}(\mathfrak{a})\}$. Therefore, $\ct_J(\mathfrak{a})$ is a rational number.  
\end{proof}

%%%%%%%%%%%%%%%%%%%%%%%%%%%%%%%%%%%%%%%%%%%%%%%%%%%%%%%%%%%%%%%%%%%%%%%%%%%%%%%%
\section{$a$-Invariants and Regularity}
%%%%%%%%%%%%%%%%%%%%%%%%%%%%%%%%%%%%%%%%%%%%%%%%%%%%%%%%%%%%%%%%%%%%%%%%%%%%%%%% 
In this section we focus on standard graded $K$-algebras. We study the $a$-invariants and regularity in rings modulo Frobenius powers of an ideal. We also investigate their behavior with the Castelnuovo-Mumford regularity in Stanley-Reisner rings.

%-----------------------------------------------------------------------
\subsection{Definitions and Property}
%----------------------------------------------------------------------

Suppose that $(R,\m, K)$ is a standard graded $K$-algebra, and let $I$ be a homogeneous ideal of $R$. We recall that if $M$ is a graded $R$-module, its $i$-th local cohomology $H^i_I(M)$ is a graded module. Moreover, if $M$ is finitely generated, the module $H^i_{\m}(M)$ is Artinian. Therefore, one can define the following number.
\begin{definition}[{\cite{GW1}}]
Let $(R,\m, K)$ be a standard graded $K$-algebra. Let $M$ be an $\frac{1}{p^e}\NN$-graded finitely generated $R$-module. If $H^i_{\m}(M) \not = 0$, we define the $i$-th $a$-invariant of $M$ by 
\begin{align*}
a_i(M)=\max \left\{s\in \frac{1}{p^e}\ZZ\;|\; H^i_{\m}(M)_s \not =0 \right\}.
\end{align*}
If $H^i_{\m}(M) = 0$, we set $a_i(M)=-\infty$.    
\end{definition}
\begin{definition}
Let $(R,\m, K)$ be a standard graded $K$-algebra. Let $M$ be an $\frac{1}{p^e}\NN$-graded finitely generated $R$-module. We define the regularity of $M$ by 
\begin{align*}
\reg(M)=\max_{i\in \ZZ}\{a_i(M)+i\}.
\end{align*}
\end{definition}

Next theorem gives us conditions for the regularity in rings modulo Frobenius power of ideals. 
\begin{theorem}[{\cite[Theorem $5.4$]{DSNBP}}]
Let $(R,\m, K)$ be a standard graded $K$-algebra that is $F$-finite and $F$-pure. Suppose that $J$ is a homogeneous ideal of $R$. If there exists a constant $C$ such that $\reg(R/J^{[p^e]})\leq Cp^e$ for all $e \gg 0$, then 
\begin{align*}
\lim\limits_{e \rightarrow \infty} {\frac{\reg(R/J^{[p^e]})}{p^{e}}}
\end{align*}
exists, and it is bounded below by $\max_{i \in \NN}\{a_i(R/J)\}+\fpt(R)$.
\end{theorem}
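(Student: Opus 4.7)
The plan is to prove existence of the limit and the lower bound separately, using Frobenius functoriality on local cohomology, the $F$-purity of $R$, and the characterization of $\fpt(R)$ in terms of the splitting ideals $I_e(R)$.

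For existence, I would fix $i$ in the finite range $0 \le i \le \dim R$ and show that the sequence $\{a_i(R/J^{[p^e]})/p^e\}_e$ is eventually non-decreasing and bounded, hence convergent. The Frobenius ring homomorphism $R/J^{[p^e]} \to R/J^{[p^{e+1}]}$, $r \mapsto r^p$, multiplies internal degrees by $p$; composing its induced map on $H^i_\m$ with a splitting of $R \hookrightarrow R^{1/p}$ furnished by $F$-purity of $R$ should yield the key inequality $a_i(R/J^{[p^{e+1}]}) \ge p\cdot a_i(R/J^{[p^e]})$. Combined with the hypothesis $\reg(R/J^{[p^e]}) \le C p^e$, the sequence $\{a_i(R/J^{[p^e]})/p^e\}$ is non-decreasing and bounded above. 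Since $\reg(R/J^{[p^e]}) = \max_{0 \le i \le \dim R}\{a_i(R/J^{[p^e]})+i\}$ ranges over finitely many $i$ and $i/p^e \to 0$, the limit $\lim_e \reg(R/J^{[p^e]})/p^e$ also exists.

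For the lower bound, fix $i$ with $a_i(R/J) > -\infty$ and pick a nonzero homogeneous class $\eta \in H^i_\m(R/J)_{a_i(R/J)}$. Using $F$-purity of $R$, I would first produce a nonzero class $\eta_e \in H^i_\m(R/J^{[p^e]})$ of degree $p^e\cdot a_i(R/J)$, giving $a_i(R/J^{[p^e]}) \ge p^e a_i(R/J)$. To upgrade by $\fpt(R)$, I invoke that $\fpt(R) = \lim_e b_\m^\m(p^e)/p^e$: for each $\varepsilon>0$ and $e \gg 0$, there exists a homogeneous $x \in \m$ of degree $b \ge (\fpt(R)-\varepsilon)p^e$ with $x \notin I_e(R)$, which yields some $\varphi \in \Hom_R(R^{1/p^e},R)$ with $\varphi(x^{1/p^e}) \notin \m$. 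A diagram chase using $\varphi$ then shows that multiplication by $x$ does not annihilate $\eta_e$, producing a nonzero class in $H^i_\m(R/J^{[p^e]})$ of degree $p^e a_i(R/J) + b$. Thus $a_i(R/J^{[p^e]}) \ge p^e a_i(R/J) + b$; dividing by $p^e$, taking $e \to \infty$, then $\varepsilon \to 0$, and finally maximizing over $i$ gives the claimed lower bound.

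The hard part will be establishing $x\cdot \eta_e \neq 0$ in $H^i_\m(R/J^{[p^e]})$. The witness $x$ only carries information about Frobenius splittings of the ambient ring $R$, whereas $\eta_e$ lives in the local cohomology of the quotient $R/J^{[p^e]}$, to which the $F$-purity of $R$ does not directly descend. Bridging this gap requires assembling a commutative diagram linking the Frobenius iterate $F^e\colon H^i_\m(R/J) \to H^i_\m(R/J^{[p^e]})$, multiplication by $x$, and the splitting $\varphi$ applied to the natural comparison between $R/J^{[p^e]}$ and $(R/J)^{1/p^e}$, so that vanishing of $x\cdot \eta_e$ can be traced back via $\varphi$ to the vanishing of a nonzero multiple of $\eta$ in $H^i_\m(R/J)$, contradicting $\eta \neq 0$.
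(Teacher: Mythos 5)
The paper states this theorem only as a citation of \cite[Theorem 5.4]{DSNBP} and does not reproduce its proof, so there is no in-paper argument to compare against; I will assess your sketch on its own terms. Your two-part plan is the standard one and is sound, and the step you single out as the hard part is less delicate than you fear once you fuse ``produce $\eta_e$'' and ``multiply by $x$'' into a single split injection. Since $I_e(R)$ is a graded ideal of the standard graded ring $R$, the non-containment $\m^{b^\m_\m(p^e)} \not\subseteq I_e(R)$ is witnessed by a homogeneous $x$ of degree $b \ge b^\m_\m(p^e)$, and a $\varphi \in \Hom_R(R^{1/p^e},R)$ with $\varphi(x^{1/p^e})=1$ may be taken homogeneous of degree $-b/p^e$. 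Now consider $\mu\colon R/J \to R^{1/p^e}/JR^{1/p^e}=(R/J^{[p^e]})^{1/p^e}$, $r \mapsto r\cdot x^{1/p^e}$; this is $R$-linear of degree $b/p^e$, and if $\bar\varphi$ denotes $\varphi\otimes_R R/J$, then $\bar\varphi\circ\mu=\mathrm{id}_{R/J}$. Hence $H^i_\m(\mu)$ is a split injection for every $i$, and since $H^i_\m$ of the $p^e$-th root is the $p^e$-th root of $H^i_\m$ with the $1/p^e$ degree rescaling, this reads $a_i(R/J^{[p^e]}) \ge p^e\,a_i(R/J)+b$. Together with $b^\m_\m(p^e)/p^e \to \fpt(R)$ and the convergence you establish in your first paragraph, this yields the stated lower bound after maximizing over $i$. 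Your existence argument is also correct, and it is worth making explicit that the inequality $a_i(R/J^{[p^{e+1}]}) \ge p\,a_i(R/J^{[p^e]})$ comes from exactly the same split-injection mechanism with $x=1$ and $J$ replaced by $J^{[p^e]}$: the $F$-purity used is that of $R$ (giving a splitting of $R\hookrightarrow R^{1/p}$, which survives $-\otimes_R R/J^{[p^e]}$), not any purity of the quotient itself, which is what your phrasing already intends.
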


%----------------------------------------------------------------------------------
\subsection{Regularity in Stanley-Reisner Rings}
Throughout this subsection, we denote $S=K[x_1,  \ldots , x_n]$ with $K$ an $F$-finite field of prime characteristic $p$. Let $I$ be a squarefree monomial ideal of $S$, $R=S/I$.
\begin{definition}
Let $N \subseteq \{1,\ldots,n\}$. We define $$x^{N}=\prod_{i \in N}x_{i}.$$
\end{definition}
Now, we define the support of an element in $\NN^{n}$.
\begin{definition}
Let $\alpha \in \NN^n$. The support of $\alpha$ is defined by 
\begin{align*}
\Supp(\alpha)=\{i \in \{1,\ldots,n\}\, | \; \alpha_i \not = 0\}.
\end{align*}
%We also take
%\begin{align*}
%x^{\Supp(\alpha)}&=\prod_{i \in \Supp(\alpha)}x_{i}
%\end{align*} 
\end{definition}

\begin{lemma}\label{lemsuport}
Given $\alpha \in \NN^n$, then $(I:x^{\alpha})=(x^{\Supp(\lambda)\setminus\Supp(\alpha)}\;|\;x^\lambda \;\mathrm{minimal \; generator \; of}\; I)$. In particular, if $\alpha, \beta \in \NN^n$ are such that $\Supp(\alpha)=\Supp(\beta)$, then $(I:x^{\alpha})=(I:x^{\beta})$.
\end{lemma}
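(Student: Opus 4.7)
The plan is to apply the standard colon formula for monomial ideals: if $I=(m_1,\dots,m_k)$ is a monomial ideal and $m$ is a monomial, then $(I:m)=(m_j/\gcd(m_j,m)\mid j=1,\dots,k)$. Taking $m=x^\alpha$ and $m_j=x^{\lambda_j}$ for a minimal monomial generating set of $I$, the problem reduces to computing each $\gcd(x^{\lambda_j},x^\alpha)$ explicitly and recognising the quotient as a support-difference monomial.

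Since $I$ is squarefree, every $\lambda_j$ lies in $\{0,1\}^n$. Coordinate by coordinate, $\min((\lambda_j)_i,\alpha_i)=1$ precisely when $i\in\Supp(\lambda_j)\cap\Supp(\alpha)$ and vanishes otherwise. Hence $\gcd(x^{\lambda_j},x^\alpha)=x^{\Supp(\lambda_j)\cap\Supp(\alpha)}$, and division gives
\[
x^{\lambda_j}/\gcd(x^{\lambda_j},x^\alpha)=x^{\Supp(\lambda_j)\setminus\Supp(\alpha)}.
\]
Substituting this back into the colon formula yields the claimed generating set for $(I:x^\alpha)$.

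The \emph{in particular} clause is then immediate, since the generators of $(I:x^\alpha)$ are expressed entirely in terms of $\Supp(\alpha)$: if $\Supp(\alpha)=\Supp(\beta)$, the two generating sets coincide term by term, so $(I:x^\alpha)=(I:x^\beta)$. I do not anticipate any serious obstacle; the essential point is that for squarefree $\lambda_j$ the gcd with $x^\alpha$ detects only which variables occur in $x^\alpha$, never with what multiplicity. The only care required is to note that the colon formula invoked above is valid for any monomial generating set of $I$, so one need not worry about whether the resulting generators on the right-hand side remain minimal.
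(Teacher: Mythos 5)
Your proof is correct, and it takes a modestly different route from the paper. Where the paper establishes the two inclusions from scratch --- one direction by checking $x^{\Supp(\lambda)\setminus\Supp(\alpha)}x^\alpha\in I$, the other by taking a monomial generator $x^\theta$ of $(I:x^\alpha)$, observing that some minimal generator $x^\lambda$ of $I$ divides $x^\theta x^\alpha$, and deducing $\Supp(\lambda)\setminus\Supp(\alpha)\subseteq\Supp(\theta)$ --- you instead invoke the standard colon formula $(I:m)=(m_j/\gcd(m_j,m))$ and reduce everything to the coordinate-wise computation $\gcd(x^{\lambda_j},x^\alpha)=x^{\Supp(\lambda_j)\cap\Supp(\alpha)}$, which is where squarefreeness of $\lambda_j$ is used. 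The two arguments are mathematically equivalent: the paper's hands-on inclusion check is essentially a special case of the proof of the colon formula you cite. What your approach buys is brevity and a cleaner separation of the general fact (the colon formula) from the specific squarefree computation; what the paper's approach buys is self-containment, avoiding reliance on an external result. Your final remark --- that it does not matter whether the listed generators of $(I:x^\alpha)$ remain minimal --- is a good observation and worth keeping, since the set $\{x^{\Supp(\lambda)\setminus\Supp(\alpha)}\}$ can indeed contain redundancies (including $1$ when $\Supp(\lambda)\subseteq\Supp(\alpha)$).
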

\begin{proof}
Since $I$ is a monomial ideal, it follows that $(I:x^{\alpha})$ is a monomial ideal as well.
We have $(x^{\Supp(\lambda) \setminus \Supp(\alpha)}\;|\;x^\lambda \;\mathrm{minimal \; generator \; of}\; I) \subseteq (I:x^{\alpha})$. Indeed, for every $x^\lambda$ minimal generator of $I$, $x^{\Supp(\lambda) \setminus \Supp(\alpha)}x^\alpha \in I$.  

We show that $(I:x^{\alpha}) \subseteq (x^{\Supp(\lambda) \setminus \Supp(\alpha)}\;|\;x^\lambda \;\mathrm{minimal \; generator \; of}\; I)$. Let $x^{\theta}$ be a generator of $(I:x^{\alpha})$. Thus $x^{\theta}x^{\alpha} \in I$. Hence, $x^{\lambda}|x^{\theta}x^{\alpha}$ for some $x^{\lambda}$ minimal generator of $I$. Then, $\Supp(\lambda)\setminus\Supp(\alpha)\subseteq \Supp(\theta)$, and so, $x^{\Supp(\lambda) \setminus \Supp(\alpha)}|x^\theta$. Therefore, $x^\theta \in (x^{\Supp(\lambda) \setminus \Supp(\alpha)}\;|\;x^\lambda \;\mathrm{minimal \; generator \; of}\; I)$.
\end{proof}    

Now, we prove Theorem \ref{MainThmC}.
\begin{theorem}\label{thmregularity}
Let $J$ be a homogeneous ideal of $R$. Then, 
\begin{align*}
\lim\limits_{e \rightarrow \infty} {\frac{\reg(R/J^{[p^e]})}{p^{e}}}&=\max_{\substack{1 \leq i \leq d\\ \alpha \in \mathcal{A}'}}\{a_i(S/(J_\alpha+J))+|\alpha|\},
\end{align*}
where $\mathcal{A}'=\{\alpha \in \mathbb{N}^n\;|\;0\leq \alpha_i\leq 1\;$for$\;i=1,\ldots,n\}$, $J_{\alpha}=(I:x^\alpha)$, and $d=\max\{\dim(S/(J_\alpha+J))\;|\;\alpha \in \cA'\}$. In particular, this limit is an integer number. 
\end{theorem}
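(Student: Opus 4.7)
The plan is to reduce the computation of $a_i(R/J^{[p^e]})$ to $a$-invariants of quotients $S/(J_\alpha+J)$ over the polynomial ring $S$, using the decomposition of $R^{1/q}$ from Proposition~\ref{propo 2} together with the compatibility of local cohomology with the Frobenius pushforward.

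First I would identify, as $\tfrac{1}{q}\NN$-graded $R$-modules with $q=p^e$,
\begin{align*}
F^e_*(R/J^{[q]}) \;\cong\; R^{1/q}\big/J\cdot R^{1/q}.
\end{align*}
This rests on the characteristic-$p$ identity $J\cdot R^{1/q}=(J^{[q]})^{1/q}$, which follows from the Frobenius: any $\sum j_i s_i^{1/q}$ with $j_i\in J$, $s_i\in R$ equals $\bigl(\sum j_i^q s_i\bigr)^{1/q}$. Tensoring the decomposition of Proposition~\ref{propo 2} with $R/J$ over $R$ then gives a graded isomorphism
\begin{align*}
F^e_*(R/J^{[q]}) \;\cong\; \bigoplus_{\alpha\in\mathcal{A}}\bigl(S/(J_\alpha+J)\bigr)\bigl(-\tfrac{|\alpha|}{q}\bigr)^{\oplus s},
\end{align*}
where the shift $|\alpha|/q$ records the degree of $(a_ix^\alpha)^{1/q}$ and $s=[K^{1/q}:K]$.

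Next I would apply $H^i_\mathfrak{m}$. Since $R$ is $F$-finite, local cohomology commutes with $F^e_*$, and under the paper's convention that $F^e_*M$ is $\tfrac{1}{q}\NN$-graded one has $a_i(F^e_*M)=a_i(M)/q$. Reading off the $a$-invariant from the direct sum then yields
\begin{align*}
\frac{a_i(R/J^{[q]})}{q} \;=\; \max_{\alpha\in\mathcal{A}}\Bigl\{a_i\bigl(S/(J_\alpha+J)\bigr)+\tfrac{|\alpha|}{q}\Bigr\}.
\end{align*}
By Lemma~\ref{lemsuport}, $J_\alpha$ depends only on $N=\mathrm{Supp}(\alpha)$, so grouping by support and maximizing $|\alpha|$ subject to $0\le\alpha_j\le q-1$ reduces this to $\max_{N\subseteq\{1,\ldots,n\}}\{a_i(S/(J_{e_N}+J))+|N|(1-1/q)\}$, where $e_N\in\mathcal{A}'$ is the indicator vector of $N$.

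Letting $e\to\infty$ gives $\lim_e a_i(R/J^{[q]})/q=\max_{\alpha\in\mathcal{A}'}\{a_i(S/(J_\alpha+J))+|\alpha|\}$. The identity $\reg(R/J^{[q]})=\max_i\{a_i(R/J^{[q]})+i\}$, combined with the fact that the relevant range of $i$ is bounded by $d=\dim(R/J)$ (so $i/q\to 0$), lets me interchange $\max$ and $\lim$ and obtain the claimed formula. Integrality of the limit is immediate since each $a_i(S/(J_\alpha+J))+|\alpha|$ is an integer. I expect the main obstacle to be the graded bookkeeping in the second paragraph: one has to verify carefully that the degree shift $|\alpha|/q$ in the $\tfrac{1}{q}\NN$-graded decomposition, passed through the Frobenius compatibility, contributes exactly $|\alpha|$ (and not $|\alpha|/q$) to the $\NN$-graded $a_i(R/J^{[q]})$. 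Once this convention is pinned down, the remainder is the combinatorial reduction via Lemma~\ref{lemsuport} and a straightforward limit.
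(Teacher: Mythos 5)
Your proposal is correct and follows essentially the same route as the paper: decompose $R^{1/q}$ via Proposition~\ref{propo 2}, tensor with $R/J$, identify $a_i(R/J^{[q]})/q$ as $\max_{\alpha\in\mathcal{A}}\{a_i(S/(J_\alpha+J))+|\alpha|/q\}$, collapse to $\mathcal{A}'$ using Lemma~\ref{lemsuport}, and pass to the limit. The only cosmetic difference is that you retain the multiplicity $s=[K^{1/q}:K]$ rather than reducing to $K$ perfect at the outset as the paper does; since $a_i(M^{\oplus s})=a_i(M)$ this changes nothing.
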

\begin{proof}
Without loss of generality, we can take $K$ a perfect field. Let $e$ be a nonnegative integer and $\mathcal{A}=\{\alpha \in \mathbb{N}^n\;|\;0\leq \alpha_i\leq p^e-1\;$for$\;i=1,\ldots,n\}$. Then, 
\begin{align*}
R^{1/p^e} \cong \bigoplus_{ \alpha \in \mathcal{A}}(S/J_\alpha)x^{\alpha/p^e},
\end{align*}
where $J_{\alpha}=(I:x^\alpha)$ by Proposition \ref{propo 2}. Applying $-\otimes_R R/J$, we obtain that
\begin{align*}
(R/J^{[p^e]})^{1/p^e}  \cong R^{1/p^e}/J R^{1/p^e} \cong \bigoplus_{ \alpha \in \mathcal{A}}(S/(J_\alpha+J))x^{\alpha/p^e},
\end{align*}
and so
\begin{align*}
H^i_\m((R/J^{[p^e]})^{1/p^e})   \cong \bigoplus_{ \alpha \in \mathcal{A}}H^i_\m((S/(J_\alpha+J))x^{\alpha/p^e}).
\end{align*}
Hence, we have
\begin{align*}
\frac{a_i(R/J^{[p^e]})}{p^e}&=a_i((R/J^{[p^e]})^{1/p^e})\\
&=\max_{\alpha \in \cA}\{a_i((S/(J_\alpha+J))x^{\alpha/p^e})\}\\
&=\max_{\alpha \in \cA} \left\{a_i(S/(J_\alpha+J))+\frac{|\alpha|}{p^e} \right\}.
\end{align*}
From Lemma \ref{lemsuport}, we have 
\begin{align*}
\frac{a_i(R/J^{[p^e]})}{p^e}&=\max_{\alpha \in \cA'}\left\{a_i(S/(J_\alpha+J))+\frac{|\alpha|(p^e-1)}{p^e}\right\}.
\end{align*}
Thus, 
\begin{align*}
\lim\limits_{e \rightarrow \infty} {\frac{\reg(R/J^{[p^e]})}{p^{e}}}&=\lim\limits_{e \rightarrow \infty} {\max_{i\in \ZZ}\left\{\frac{a_i(R/J^{[p^e]})}{p^e}+\frac{i}{p^e}\right\}}\\
&=\lim\limits_{e \rightarrow \infty} {\max_{i\in \ZZ} \left\{ \max_{\alpha \in \cA'}\{a_i(S/(J_\alpha+J))+\frac{|\alpha|(p^e-1)}{p^e}\}+\frac{i}{p^e}\right\}}\\
&=\lim\limits_{e \rightarrow \infty} {\max_{\substack{1 \leq i \leq d\\ \alpha \in \mathcal{A}'}} \left\{a_i(S/(J_\alpha+J))+\frac{|\alpha|(p^e-1)}{p^e}+\frac{i}{p^e}\right\}}\\
&= \max_{\substack{1 \leq i \leq d\\ \alpha \in \mathcal{A}'}} \left\{\lim\limits_{e \rightarrow \infty}{a_i(S/(J_\alpha+J))+\frac{|\alpha|(p^e-1)}{p^e}+\frac{i}{p^e}}\right\}\\
&= \max_{\substack{1 \leq i \leq d\\ \alpha \in \mathcal{A}'}}\{a_i(S/(J_\alpha+J))+|\alpha|\}.
\end{align*}
 
\end{proof}

%%%%%%%%%%%%%%%%%%
\section*{Acknowledgments}
I would like to thank Alessandro De Stefani, Daniel J. Hern\'andez, Luis N\'u\~nez-Betancourt, and Emily E. Witt for sharing their work \cite{DSHNBW}. I also thank them, and the referee for helpful comments, and suggestions.

\bibliographystyle{alpha}
\bibliography{References}

\end{document}